\pdfoutput=1
\documentclass[11pt]{article}

\usepackage[letterpaper,margin=1in]{geometry}
\usepackage{amsthm,amsmath,amsfonts,amssymb}
\usepackage[numbers,sort&compress]{natbib}
\usepackage[colorlinks=true,linkcolor=blue,citecolor=blue,urlcolor=blue]{hyperref}

\hypersetup{
  pdftitle={Quadratic mixing for the Lee--Sidford Dikin walk},
  pdfauthor={Yiping Lu},
  pdfkeywords={Dikin walk, Lee--Sidford metric, Lewis weights,
    average self-concordance, Wiener chaos}
}
\numberwithin{equation}{section}
\theoremstyle{plain}
\newtheorem{theorem}{Theorem}[section]
\newtheorem{proposition}[theorem]{Proposition}
\newtheorem{lemma}[theorem]{Lemma}
\newtheorem{corollary}[theorem]{Corollary}
\theoremstyle{definition}
\newtheorem{definition}[theorem]{Definition}
\theoremstyle{definition}
\newtheorem{remark}[theorem]{Remark}

\newcommand{\R}{\mathbb R}
\newcommand{\C}{\mathbb C}
\newcommand{\E}{\mathbb E}
\newcommand{\Pp}{\mathbb P}
\newcommand{\Id}{I}
\newcommand{\Diag}{\operatorname{Diag}}
\newcommand{\diag}{\operatorname{diag}}
\newcommand{\tr}{\operatorname{tr}}
\newcommand{\op}{\mathrm{op}}
\newcommand{\HS}{\mathrm{HS}}
\newcommand{\polylog}{\operatorname{polylog}}
\newcommand{\cE}{\mathcal E}
\newcommand{\cF}{\mathcal F}
\newcommand{\cP}{\mathcal P}
\newcommand{\cR}{\mathcal R}
\newcommand{\cT}{\mathcal T}
\newcommand{\cX}{\mathcal X}
\newcommand{\cY}{\mathcal Y}
\newcommand{\ip}[2]{\langle #1,#2\rangle}
\newcommand{\norm}[1]{\left\lVert #1\right\rVert}
\newcommand{\abs}[1]{\left\lvert #1\right\rvert}
\newcommand{\dd}{\,\mathrm d}
\newcommand{\had}{\mathbin{\circ}}
\newcommand{\eps}{\varepsilon}
\newcommand{\transpose}{\mathsf T}
\DeclareMathOperator{\Tr}{Tr}

\begin{document}

\title{Quadratic mixing for the Lee--Sidford Dikin walk}
\author{Yiping Lu\\
\small Department of Industrial Engineering and Management Sciences\\
\small Northwestern University\\
\small \texttt{yiping.lu@northwestern.edu}}
\date{}
\maketitle

\begin{abstract}
Let $K=\{x\in\R^d:Ax\ge b\}$ be a bounded full-dimensional
polytope with $m$ inequalities. For targets on $K$ with density
proportional to $\exp(-\vartheta^\transpose x)$, we prove a
$\widetilde O(d^2)$ bound, measured in Markov chain iterations, for
the chi-square mixing of the $1/2$-lazy Metropolis Dikin walk based
on a polylogarithmically rescaled Lee--Sidford metric. The estimate
holds for every initial law with finite chi-square divergence. The
proof controls the reverse Gaussian proposal through a quantitative
average self-concordance estimate at the required $O(1/d)$ scale.
We establish this estimate from uniform analytic derivative bounds
for the Lewis-weight subspace and an all-order projection-weighted
contraction inequality for Gaussian chaoses. For order $j$, the
latter bounds each base-point polynomial at scale $d^{j/2}$, up to
$j$-dependent and logarithmic factors. Truncating at logarithmic
order then absorbs the remaining pathwise loss. \textbf{This resolves, in the affirmative, the open problem of whether the Gaussian
Lee--Sidford Dikin walk mixes in $\widetilde O(d^2)$ iterations: the
previously stated quadratic bounds \cite[Theorem~1.4]{LLV20},
\cite[Theorem~1.2]{GKM24} rest on reverse-proposal steps identified as
unresolved in \cite[Section~1]{Kook26}, whose $\widetilde O(d^{9/4})$ bound
was the best rigorously established rate prior to this work. The proof is found by ChatGPT 5.6 Pro. The author verified the proof and promised to fine-tune the draft later.}
\end{abstract}

\medskip
\noindent\textbf{Keywords:}
Dikin walk; Lee--Sidford metric; Lewis weights; average
self-concordance; Wiener chaos.

\smallskip
\noindent\textbf{2020 Mathematics Subject Classification:}
Primary 60J10, 65C05; Secondary 52A40, 60G15, 68W20, 90C51.

\section{Introduction}

Interior-point geometry supplies affine-invariant local coordinates
for both optimization and sampling. At its current point, a Dikin
walk proposes a Gaussian increment with covariance proportional to
the inverse of a barrier metric and then applies a Metropolis
correction. The metric adapts to the boundary, but the same state
dependence creates the principal probabilistic difficulty: the
reverse Gaussian proposal is evaluated in a random metric at the
proposed point.

The logarithmic-barrier Dikin walk has an iteration bound depending
polynomially on the number of inequalities \cite{KN12,Nar16,SV16}.
Lewis weights originate in functional analysis \cite{Lewis78}, became
algorithmic row-sampling and barrier tools \cite{CP15,LS14,LS19}, and
subsequently entered local sampling geometries \cite{CDWY18,LLV20}.
We ask whether the resulting Lee--Sidford local-metric chain has a
mixing bound quadratic in the ambient dimension, up to logarithmic
factors.

\subsection{Problem setting and main results}

Let $K=\{x\in\R^d:Ax\ge b\}$ be bounded and full-dimensional, where
$A\in\R^{m\times d}$ has full column rank and no zero row. The target
distributions are
\begin{equation}
 \pi_\vartheta(\dd x)
 =Z_\vartheta^{-1}e^{-\vartheta^\transpose x}\mathbf 1_K(x)\dd x,
 \qquad \vartheta\in\R^d,
 \label{eq:target}
\end{equation}
and include the uniform distribution when $\vartheta=0$.
For probability measures $\mu\ll\pi$, write
\begin{equation}
 \chi^2(\mu\Vert\pi)
 :=\int\left(\frac{\dd\mu}{\dd\pi}-1\right)^2\dd\pi.
 \label{eq:chisquare}
\end{equation}
Thus finite chi-square divergence is exactly the condition
$\dd\mu/\dd\pi-1\in L^2(\pi)$.

Given a local metric $g$ and a proposal radius $\varrho>0$, write
\[
 Q_g^{\varrho}(x,\cdot)=N\!\left(x,\frac{\varrho^2}{d}g(x)^{-1}\right).
\]
The Metropolis Dikin kernel, with the usual Metropolis--Hastings
correction \cite{Hastings70}, proposes
$y\sim Q_g^{\varrho}(x,\cdot)$, rejects unless
$y\in\operatorname{int}K$, and otherwise accepts with probability
\begin{equation}
 1\wedge\frac{\pi_\vartheta(y)q_y(x)}
 {\pi_\vartheta(x)q_x(y)},
 \qquad
 q_x(y)=\frac{\det(g(x))^{1/2}}{(2\pi\varrho^2/d)^{d/2}}
 \exp\!\left(-\frac{d}{2\varrho^2}\norm{y-x}_{g(x)}^2\right).
 \label{eq:DikinKernel}
\end{equation}
The $1/2$-lazy version stays at its current state with probability
$1/2$ and otherwise applies this kernel.

The precise question is whether a polylogarithmic rescaling of the
Lee--Sidford metric, together with a dimension-independent proposal
radius, makes this lazy kernel mix in $\widetilde O(d^2)$ iterations,
uniformly over $\vartheta$, from every initial law whose density
belongs to $L^2(\pi_\vartheta)$. The obstruction is not the forward
Gaussian proposal itself but the random quadratic form in its reverse
density. We must control that form at scale $O(\varrho^2/d)$ with high
probability and then obtain overlap of the accepted densities, without
assuming a function whose Hessian is comparable to $g$. The following
quantitative average self-concordance condition isolates the required
probabilistic input.

\begin{definition}[Quantitative average self-concordance]
\label{def:qasc}
For a metric $g$ and $\eps\in(0,1)$, its quantitative average
self-concordance (ASC) radius $R_\eps(g)$ is the
supremum of radii $r\ge0$ such that, for every
$x\in\operatorname{int}K$, every $0<s\le r$, and
$z\sim N(x,s^2g(x)^{-1}/d)$,
\begin{equation}
 \Pp\!\left(
 \abs{\norm{z-x}_{g(z)}^2-\norm{z-x}_{g(x)}^2}
 \le\frac{2\eps s^2}{d},\ z\in\operatorname{int}K
 \right)\ge1-\eps.
 \label{eq:quantASCdef}
\end{equation}
\end{definition}

The ASC condition in \cite{KV24} is one-sided and does not place an
absolute value around the metric discrepancy. Definition~2.1 of
\cite{Kook26} makes the two-sided event and the interior convention
explicit. We adopt those two conventions and package the dependence
on $\eps$ by introducing the maximal admissible radius $R_\eps(g)$.
Only the one-sided implication is needed in the mixing reduction; the
argument below proves the two-sided form.

\begin{theorem}[Quantitative average self-concordance]
\label{thm:asc}
There are universal constants $c_{\rm asc},C_{\rm asc}>0$ such that
for every $p\ge4$ and every $\eps\in(0,1/4)$, the baseline
$\ell_p$ Lee--Sidford metric $g_0$ defined in \eqref{eq:g0} satisfies
\begin{equation}
 R_\eps(g_0)\ge
 \underline r_\eps
 :=c_{\rm asc}\eps^3p^{-C_{\rm asc}}
 \left(1+\log\frac{2(m+d+1)}{\eps}\right)^{-C_{\rm asc}}.
 \label{eq:reps}
\end{equation}
The same constants work uniformly for all $m,d,p,\eps,A$, and $b$.
\end{theorem}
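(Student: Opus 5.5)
Set $h=z-x$, so that $h=(s/\sqrt d)\,g_0(x)^{-1/2}\xi$ with $\xi\sim N(0,I_d)$. The quantity to control is $\Delta(h)=h^\transpose\bigl(g_0(x+h)-g_0(x)\bigr)h$, and the goal is $\abs{\Delta}\le 2\eps s^2/d$ with probability at least $1-\eps$, while keeping $z$ in the interior of $K$.

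\textbf{Step 1 (reduction to Lewis-weight rows).} Write $g_0(x)=A_x^\transpose W_x A_x$ up to the polylogarithmic factors in \eqref{eq:g0}. Here $A_x=S_x^{-1}A$, $S_x=\Diag(Ax-b)$, and $W_x$ is the diagonal of $\ell_p$ Lewis weights. Put $u=A_xh$ (slack-normalized increments) and $P_x$ for the projection onto the Lewis-weight subspace $W_x^{1/2}A_x\R^d$.

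\textbf{Step 2 (interior event).} The standard bound $\norm{u}_\infty\lesssim \sqrt{\log(m/\eps)}\,s/\sqrt d$ holds with probability $1-\eps/4$. It uses the fact that the leverage scores of $W^{1/2}A_x$ equal $w_i$, together with the lower bound $w_i\gtrsim d/m$ (after regularization), which makes each $u_i$ Gaussian with variance $O(s^2/d)$. This gives $z\in\operatorname{int}K$ as soon as $s$ is below a polylogarithmic threshold. The same estimate keeps $Ax+tAh-b$ within a constant factor of $Ax-b$ along the whole segment.

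\textbf{Step 3 (Taylor expansion).} Expand $t\mapsto h^\transpose g_0(x+th)h$ to order $J\asymp\log(1/\eps)+\log\log(m+d)$, with an integral remainder. The $j$-th derivative is a degree-$(j+2)$ polynomial in $\xi$. I would invoke the paper's uniform analytic derivative bounds for the Lewis-weight subspace: the $j$-th directional derivative of $W$ and of $P$ in direction $h$ is bounded by $C^j j!\,p^{O(j)}$ times products of $\Diag(u)$ sandwiched between projections. Each term is then a multilinear form $\sum T_{i_1\dots i_{j+2}}u_{i_1}\cdots u_{i_{j+2}}$ whose coefficient tensor is built from $P_x$ and diagonal matrices.

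\textbf{Step 4 (chaos bounds and summation).} I would apply the projection-weighted contraction inequality for Gaussian chaoses. This bounds each order-$j$ polynomial by $d^{j/2}$ up to $j$-dependent and logarithmic factors, relative to the scale $(s/\sqrt d)^{j+2}$ of the increment. Hypercontractivity then gives, with probability $1-\eps/(2J)$, that the $j$-th term is at most $(Cp^{C}\log(m/\eps))^{j}\, s^{j+2}d^{-1}\, j^{O(j)}$. The first-order term ($j=1$) is the delicate one. It is a cubic chaos whose mean vanishes by symmetry, and its fluctuation is $O(s^3/d)$ because the projection-weighted contraction norms are bounded by $\tr P=d$ rather than $m$. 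The remainder beyond order $J$ is handled pathwise using Step 2 and the crude bound $\norm{u}_\infty$, which gives a factor $(Cs\sqrt{\log})^{J}$. Choosing $s\le\underline r_\eps$ makes $\sum_j$ at most $\eps s^2/d$. The factor $\eps^3$ absorbs the Markov/Chebyshev losses at levels $\eps$ in both probability and size.

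\textbf{Main obstacle.} I expect Step 4 to be the hardest: showing that the chaos norms scale like $d^{j/2}$ and not $m^{j/2}$. This requires the contractions of the coefficient tensors to collapse to traces of $P_x$ using $P^2=P$ and $\sum_i P_{ii}=d$. If this fails, I would first try the simpler route of bounding only the $j=1,2$ terms by explicit second-moment computations and absorbing all higher orders pathwise, at the cost of a worse polylogarithmic power.
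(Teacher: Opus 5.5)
\textbf{Gap 1: the remainder beyond order $J$, and hence your choice of $J$.} You assume that a pathwise bound using $\norm{A_xh}_\infty$ controls this remainder at the natural scale $s^2/d$, up to a factor $(Cs\sqrt{\log})^J$. That is true for the slack part of $g_0$ but false for the Lewis-weight part. By \eqref{eq:LewisDerivative}, $D\log w_x[h]=-W_x^{-1/2}N_xW_x^{1/2}A_xh$. Its size is governed by $\norm{W_x^{1/2}A_xh}_2\le\norm{h}_{g_0(x)}\approx s$, not by $\norm{A_xh}_\infty\approx s\sqrt{\log(m/\eps)}/\sqrt d$.

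Consequently every pathwise bound on the Lewis part loses a factor $\sqrt d$. In the normalization $h\sim N(0,\Id_d)$ one only has $\abs{H_j(t)}\le\norm{q_t}\norm{N_t^{(j-1)}}_{\op}\norm{v_t}\lesssim d^{(j+1)/2}$, against the natural $d^{j/2}$ (Lemma~\ref{lem:pathbounds}). So the order-$(J+1)$ remainder is of size $r\sqrt d\,\theta^J$, up to polylogarithmic factors, with $\theta\approx r\,\polylog(m,d)$. Killing this $\sqrt d$ with an inverse-polylogarithmic $r$ requires $J\log(1/\theta)\gtrsim\log(d/\eps)$, so $J$ must be essentially logarithmic in $d$. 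Your $J\asymp\log(1/\eps)+\log\log(m+d)$ gives only $\theta^J=\exp(-O((\log\log(m+d))^2))$ for fixed $\eps,p$, which is far larger than $d^{-1/2}$.

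The paper takes $J=\lceil C_J\log(2(d+1)Q_\eps/\eps)\rceil$. It then has to sum base-point bounds with $(Cj)^{Cj}$ growth up to that order, which is why $r$ carries the factor $J^{-C_S}$; this is still polylogarithmic because $J\lesssim p\Lambda_\eps$. For the same reason your fallback fails. Treating only $j\le2$ at the base point leaves a pathwise $H_3$ remainder of order $r^3\sqrt d$, which forces $r\lesssim d^{-1/6}$. That is a polynomial loss (the exponent-$2+1/(k+1)$ phenomenon of \cite{Kook26}), not a worse polylogarithm.

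\textbf{Gap 2: the base-point chaos step rests on the wrong mechanism.} The issue is not $m^{j/2}$ versus $d^{j/2}$ but $d^{(j+1)/2}$ versus $d^{j/2}$. Moreover, Theorem~\ref{thm:weightedWick} is not a bound for arbitrary Taylor coefficients. It applies to $H_j(0)=\sum_i\sqrt{w_i}(k_i^\transpose h)^2\cT_i^{(j)}[h^{\otimes j}]$ under three hypotheses.

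The zero and one contractions follow from $P\preceq\Id$ and $\Gamma\preceq\Id$. Two further inputs are needed:
\begin{enumerate}
\item The double contraction needs the relative-row bound $\abs{\cT_i^{(j)}[x_1,\ldots,x_j]}\lesssim\sqrt{w_i}\prod_\ell\norm{x_\ell}$. This comes from the $W_x^{-1/2}(\cdot)$, $2\to\infty$ jet estimate of Theorem~\ref{thm:jets}, not from operator-norm derivative bounds.
\item The noncentered part $\sum_i\sqrt{w_i}\cT_i^{(j)}[h^{\otimes j}]$ is bounded by Cauchy--Schwarz only by $\sqrt d\,\norm h^j$. It needs the cancellation obtained by differentiating $N\omega=0$ (Lemma~\ref{lem:kernel}).
\end{enumerate}
Your description of the derivatives as products of $\Diag(u)$ between projections supplies neither input, and it omits the resolvent $(\Id-c_p\bar\Lambda)^{-1}$. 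This is also why the paper expands only $H_j$ selectively and bounds $G_j$ and $E_1'$ pathwise, rather than expanding all of $F^{(j)}(0)$ at the base point.

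\textbf{Minor point.} In Step 2 no lower bound $w_i\gtrsim d/m$ is available for the unregularized $g_0$. None is needed: by the leverage identity, $\operatorname{Var}((A_xh)_i)=w_i^{2/p}s^2/d\le s^2/d$.
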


Set
\begin{equation}
 p_m:=4+2\log(2m),\qquad
 \lambda_m:=(1+p_m^2)(1+p_m),\qquad
 \eps_0:=\frac1{20},
 \label{eq:pchoice}
\end{equation}
and define
\begin{equation}
 \widehat r_0
 :=\min\!\left\{1,\,
 c_{\rm asc}\eps_0^3p_m^{-C_{\rm asc}}
 \left(1+\log\frac{2(m+d+1)}{\eps_0}\right)^{-C_{\rm asc}}
 \right\},
 \quad
 L_{m,d}:=\lambda_m\widehat r_0^{-2}.
 \label{eq:Lchoice}
\end{equation}

\begin{theorem}[Quadratic mixing in Markov chain iterations]
\label{thm:main}
Let $g_0$ be the baseline metric defined in \eqref{eq:g0}, with
$p=p_m$,
and set $g_\star=L_{m,d}g_0$. There is a proposal radius
$\varrho_\star>0$, depending only on universal constants, such that
for every target \eqref{eq:target}, every
$\mu_0\ll\pi_\vartheta$ with
$\chi^2(\mu_0\Vert\pi_\vartheta)<\infty$, and every
$\delta\in(0,1)$, the law $\mu_T$ of the $1/2$-lazy walk after any
integer $T\ge0$ satisfies
$\chi^2(\mu_T\Vert\pi_\vartheta)\le\delta$ whenever
\begin{equation}
 T\ge C d^2L_{m,d}
 \log\!\left(
 \frac{1+\chi^2(\mu_0\Vert\pi_\vartheta)}{\delta}
 \right).
 \label{eq:mainmix}
\end{equation}
Moreover, $L_{m,d}=\polylog(m,d)$.
\end{theorem}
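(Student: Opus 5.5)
We prove Theorem~\ref{thm:main} by the standard conductance/one-step-overlap route, with Theorem~\ref{thm:asc} (at $\eps=\eps_0$, $p=p_m$) as the only nontrivial input about reverse proposals. The chain is $\pi_\vartheta$-reversible by construction of \eqref{eq:DikinKernel}, and the $1/2$-lazy kernel $P$ is a positive operator on $L^2(\pi_\vartheta)$. Hence $\chi^2(\mu_T\Vert\pi_\vartheta)\le(1-\lambda)^{2T}\chi^2(\mu_0\Vert\pi_\vartheta)$, where $\lambda$ is the spectral gap. By Cheeger's inequality, $\lambda\ge\Phi^2/2$, where $\Phi$ is the conductance. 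It therefore suffices to show $\Phi\gtrsim 1/(d\sqrt{L_{m,d}})$. Then $T\gtrsim d^2L_{m,d}\log(\chi^2(\mu_0)/\delta)$ gives \eqref{eq:mainmix}, and the $1+\chi^2$ in the logarithm covers the case $\chi^2(\mu_0)\le\delta$ trivially.

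The conductance bound splits into two ingredients. The first is an \emph{isoperimetric inequality} for $\pi_\vartheta$ in the metric $g_\star$. Log-concave measures on $K$ satisfy isoperimetry with respect to the cross-ratio distance. For a metric $g$ that is symmetric with self-concordance parameter $\bar\nu$ (i.e.\ $\norm{\cdot}_{g(x)}$-unit balls lie in $K\cap(2x-K)$ and $K\cap(2x-K)\subseteq$ the $\sqrt{\bar\nu}$-ball), the cross-ratio distance dominates $\norm{x-y}_{g(x)}/\sqrt{\bar\nu}$ for nearby points. For the $\ell_p$ Lee--Sidford metric with $p=p_m$, the Lewis-weight sum is $O(d)$ and the rescaling factor $(1+p)$ ensures the Dikin ellipsoid is inside $K$. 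Hence $g_0$ is $O(\lambda_m d)$-symmetric, and $g_\star=L_{m,d}g_0$ is $O(\lambda_m L_{m,d}d)$-symmetric, while its unit ellipsoids are even smaller. This yields isoperimetry of the form $\pi(S_3)\gtrsim \mathrm{dist}_{g_\star}(S_1,S_2)\,\min\{\pi(S_1),\pi(S_2)\}/\sqrt{\lambda_m L_{m,d}d}$. The exact bookkeeping of $\lambda_m$ is why it appears in $L_{m,d}$.

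The second ingredient is the \emph{one-step overlap}: whenever $\norm{x-y}_{g_\star(x)}\le c\varrho_\star/\sqrt d$, we need $\norm{P_x-P_y}_{TV}\le 1-c'$. This is the main obstacle, and it is where Theorem~\ref{thm:asc} enters. Because $g_\star=L_{m,d}g_0$, the step $z\sim Q^{\varrho}_{g_\star}(x,\cdot)$ equals a $g_0$-proposal with radius $s=\varrho L_{m,d}^{-1/2}\le\varrho\,\widehat r_0$. The ASC event is scale-covariant, since both sides of \eqref{eq:quantASCdef} scale by $L_{m,d}$. So with probability $\ge1-\eps_0$ we have $z\in\operatorname{int}K$ and $\abs{\norm{z-x}^2_{g_\star(z)}-\norm{z-x}^2_{g_\star(x)}}\le 2\eps_0\varrho^2/d$.

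On this event we lower bound the Metropolis ratio by controlling three terms.
\begin{itemize}
\item[(a)] The quadratic-form difference in the exponent is bounded by the ASC estimate, contributing a factor $e^{-\eps_0}$ at most.
\item[(b)] The term $\tfrac12\log\det g_\star(z)-\tfrac12\log\det g_\star(x)$ needs a separate argument. I would use the derivative bounds for Lewis weights (the same ones underlying Theorem~\ref{thm:asc}). These show that $\log\det g_0$ has gradient of $g_0$-norm $O(\sqrt d\,\mathrm{poly}(p))$ and Hessian $\preceq \mathrm{poly}(p)\,g_0$. A Gaussian step then changes it by $O(\varrho\,\mathrm{poly}(p)/\sqrt{L_{m,d}})+O(\varrho^2)=O(\varrho)$ with high probability. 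The linear term has mean zero, so a Gaussian tail bound suffices.
\item[(c)] The density ratio $e^{-\vartheta^\transpose(z-x)}$ cannot be handled uniformly in $\vartheta$ directly. Instead, I would use the standard trick: the acceptance involves $\vartheta^\transpose(z-x)$, which is a symmetric Gaussian. Its sign is therefore favourable with probability $1/2$, which bounds acceptance below by a constant without any size assumption on $\vartheta$. Alternatively, one can compare $P_x$ with $P_y$ through the common rejection-free part.
\end{itemize}
Combining these gives $P_x(\text{accept})\ge c$. Next, the forward Gaussians $Q_x,Q_y$ must overlap. For $\norm{x-y}_{g_\star(x)}\lesssim\varrho/\sqrt d$, the mean shift contributes $O(1)$ TV, and the covariance mismatch contributes $\norm{g_\star(x)^{-1/2}g_\star(y)g_\star(x)^{-1/2}-I}_F$. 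That Frobenius norm must be $O(1)$, which requires a Frobenius-type (not just operator-norm) Lipschitz bound on $g_0$. This again follows from the Lewis-weight derivative estimates: the directional derivative $Dg_0[h]$ is a diagonal reweighting by $O(\mathrm{poly}(p))\,|a_i^\transpose h|/s_i$. Its Frobenius norm relative to $g_0$ is $\lesssim\mathrm{poly}(p)\norm{h}_{g_0}$, with the $\sqrt d$ loss compensated by $\norm{h}_{g_0}\lesssim\varrho/\sqrt{dL_{m,d}}$.

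Choosing $\varrho_\star$ a small universal constant closes the argument, and the standard Lovász--Simonovits argument converts the two ingredients into $\Phi\gtrsim\varrho_\star/(\sqrt d\cdot\sqrt{\lambda_mL_{m,d}d})$. Finally, $L_{m,d}=\lambda_m\widehat r_0^{-2}=\polylog(m,d)$ is immediate from \eqref{eq:pchoice}--\eqref{eq:Lchoice}, since $p_m=O(\log m)$. The one worry is the extra $\sqrt{\lambda_m}$ in the conductance. I would try to show that the isoperimetry actually depends only on $\lambda_m d$ in the $g_0$ scale, so that the final rate is $d^2L_{m,d}$ up to the polylog absorbed into $C$ or into $L$.
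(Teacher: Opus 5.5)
Your overall route is the paper's: reversibility and positivity of the lazy kernel, Cheeger, cross-ratio isoperimetry through symmetry, and one-step overlap from Theorem~\ref{thm:asc} at the rescaled radius $s=\varrho L_{m,d}^{-1/2}\le\varrho\widehat r_0$ with $\eps=\eps_0$. Using $\eps_0$ directly instead of a full ASC modulus is fine. The genuine gap is in step (c).

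Two facts do not give $d_{\rm TV}(\mathsf P_x,\mathsf P_y)\le1-c'$: that each chain accepts with at least constant probability (because $\vartheta^\transpose(z-x)\le0$ with probability $1/2$), and that $d_{\rm TV}(Q_x,Q_y)$ is small. The only way to combine them is $d_{\rm TV}(\mathsf P_x,\mathsf P_y)\le d_{\rm TV}(\mathsf P_x,Q_x)+d_{\rm TV}(Q_x,Q_y)+d_{\rm TV}(Q_y,\mathsf P_y)$, where $d_{\rm TV}(\mathsf P_s,Q_s)$ is the rejection probability. For large $\vartheta$ that rejection probability is close to $1/2$ for each chain, so the bound is about $1$.

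What is missing is a set on which \emph{both} accepted densities are comparable to the proposals. The paper (Lemma~\ref{lem:linearTargetOverlap}) takes $H=\{z:\vartheta^\transpose z\le\min(\vartheta^\transpose x,\vartheta^\transpose y)\}$. On $H$ both $e^{-\vartheta^\transpose(z-x)}$ and $e^{-\vartheta^\transpose(z-y)}$ are at least $1$, and the proposal from the minimizing endpoint gives $H$ mass $1/2$. Hence $\int\min(k_x,k_y)\ge e^{-c}\bigl(\tfrac12-d_{\rm TV}(q_x,q_y)-q_x(\mathcal G_x^c)-q_y(\mathcal G_y^c)\bigr)$, where $\mathcal G_s$ are the ASC, norm and determinant good events. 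Because the rejection atoms sit at distinct points, $d_{\rm TV}(\mathsf P_x,\mathsf P_y)=1-\int\min(k_x,k_y)$. With $\eps_0=1/20$ this still leaves a positive margin. Your remark about the ``common rejection-free part'' points in this direction, but the common halfspace is exactly the idea you must supply.

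In (b) you likewise need a \emph{lower} bound on $\tfrac12\log\det g_\star(z)-\tfrac12\log\det g_\star(x)$. That requires a lower Hessian bound along the segment, for instance convexity of $\log\det H_p$ from Proposition~\ref{prop:knownregularity}, or LTSC together with SSC; an upper bound $\preceq\mathrm{poly}(p)g_0$ does not suffice.

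Second, your symmetry bookkeeping yields $T\gtrsim d^2\lambda_mL_{m,d}$, not \eqref{eq:mainmix} with a universal $C$. Since $\lambda_m$ grows with $m$, it cannot be absorbed into $C$. The extra factor is double counting, because $\lambda_m$ is already inside $L_{m,d}=\lambda_m\widehat r_0^{-2}$.

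The baseline metric is itself $ed$-symmetric. The leverage identity $a_{x,i}^\transpose g_0(x)^{-1}a_{x,i}=w_{x,i}^{2/p}\le1$ puts the unit $g_0(x)$-ball inside $K\cap(2x-K)$ with no rescaling; the factor $\lambda_p$ is needed for strong self-concordance, not for this inclusion. On $K\cap(2x-K)$ one has $\abs{a_{x,i}^\transpose(y-x)}\le1$, so $\norm{y-x}_{g_0(x)}^2\le\sum_iw_{x,i}^{1-2/p}\le ed$ once $p\ge2(1+\log m)$.

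Thus $g_\star$ is $edL_{m,d}$-symmetric. This is what the paper gets from $\bar\nu(H_{p_m})\le e\lambda_md$ and $g_\star=\widehat r_0^{-2}H_{p_m}$. It gives $\Phi\gtrsim\varrho_\star/(d\sqrt{L_{m,d}})$ and a spectral gap $\gtrsim1/(d^2L_{m,d})$, as required.
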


Theorem~\ref{thm:main} counts Markov chain iterations only. It excludes
the cost of computing exact or approximate Lewis weights
\cite{CP15,FLPS22} and makes no membership- or evaluation-oracle
complexity claim. No affine rounding of $K$ is required. The sole
initialization condition is finite chi-square divergence. In
particular, if
$\dd\mu_0/\dd\pi_\vartheta\le M$, then
$1+\chi^2(\mu_0\Vert\pi_\vartheta)\le M$, so the displayed bound is
$Cd^2L_{m,d}\log(M/\delta)$.

\subsection{Prior work and scope}

\paragraph*{Barrier metrics for local sampling.}
For a polytope with $m$ inequalities in dimension $d$, the
logarithmic-barrier Dikin walk has a
$\widetilde O(md)$ warm-start iteration bound for uniform sampling
\cite{KN12,SV16}; Narayanan extended the barrier-based viewpoint to
more general convex sets and randomized interior-point methods
\cite{Nar16}. Lewis weights originate in Banach-space geometry
\cite{Lewis78}. Their algorithmic computation was developed in
\cite{CP15,FLPS22}, while approximate John and Lewis-weight barrier
constructions for linear programming were developed in
\cite{LS14,LS19}. These weighted geometries were subsequently used
to construct local-metric sampling algorithms \cite{CDWY18,LLV20}.

Chen, Dwivedi, Wainwright and Yu proved that the Vaidya walk mixes
from an $M$-warm start in
$\widetilde O(m^{1/2}d^{3/2})$ iterations
\cite[Theorem~1]{CDWY18}. Under the additional condition
$m\le\exp(\sqrt d)$, their Gaussian walk based on approximate John
weights has mixing time $\widetilde O(d^{5/2})$
\cite[Theorem~2]{CDWY18}. They conjectured a
$\widetilde O(d^2)$ bound for a differently scaled improved John
walk \cite[Conjecture~5]{CDWY18}. These results concern the uniform
law, total-variation error and an $M$-warm initialization. Our kernel
instead uses the $\ell_p$ Lee--Sidford metric, treats every
exponential tilt, and proves chi-square contraction from every
finite-chi-square initial law; it therefore does not resolve the
improved-John conjecture. The exact John-ellipsoid walk of Gustafson
and Narayanan is a distinct algorithm: it samples uniformly from the
maximum-volume ellipsoid of the symmetrized body and has a
$\widetilde O(d^7)$ warm-start bound \cite{GN23}.

\paragraph*{Quadratic Lee--Sidford claims and the reverse proposal.}
Laddha, Lee and Vempala stated a $\widetilde O(d^2)$ bound for their
Lee--Sidford Dikin walk \cite[Theorem~1.4]{LLV20}; their proposal is
uniform on a local ellipsoid rather than Gaussian. Gu et al. stated a
robust Lee--Sidford bound
\[
 \widetilde O\!\left(
 (d^2+dL^2R^2)\log\frac{w}{\delta}
 \right)
\]
for an $L$-Lipschitz potential on a body contained in a radius-$R$
ball, from a $w$-warm start \cite[Theorem~1.2]{GKM24}. Kook
\cite[Section~1]{Kook26} identifies unresolved reverse-proposal steps
in the published proofs of these bounds: in the former argument the
current point must be controlled in the reverse Dikin ellipsoid,
while the proof sketch of \cite[Lemma~D.5]{GKM24} does not establish
the $O(\varrho^2/d)$ quadratic-form scale required by average
self-concordance. We therefore record these bounds as previously
stated results but do not use them as inputs. This issue concerns only
the reverse-density estimate; the SSC, log-determinant convexity and
symmetry lemmas from \cite{LLV20} remain independent inputs below.

For the same Gaussian Lee--Sidford Dikin chain considered here, the
recent preprint \cite[Theorem~1.1]{Kook26} proves average
self-concordance after a
$\widetilde O(d^{1/4})$ metric rescaling and obtains
$\widetilde O(d^{9/4})$ iterations for exponential targets. It also
introduces the selective higher-order expansion, moving orthonormal
frames and Wiener-chaos organization used here, and observes that
order-$k$ estimates would lead to exponent $2+1/(k+1)$
\cite[Section~1.3]{Kook26}. Our contribution is to supply all-order
analytic frame and contraction estimates, allowing the rescaling loss
to be reduced to polylogarithmic size.

\paragraph*{Other local-metric samplers.}
Jiang and Chen prove $\widetilde O(md+\kappa d)$ iterations for the
soft-threshold Dikin walk and
$\widetilde O(d^{5/2}+\kappa d)$ for their regularized Lewis-weight
Dikin walk, in the notation and under the conditions of that work,
with extensions to weakly log-concave truncations and
beyond-worst-case bounds \cite{JiangChen25}. Gatmiry, Kelner and
Vempala analyze Riemannian HMC using a hybrid of the Lewis-weight and
logarithmic barriers and obtain $\widetilde O(m^{1/3}d^{4/3})$
mixing for polytope sampling \cite{GKV24}. These are different
kernels, and their complexity expressions are not directly
comparable to the iteration-only bound proved here.

\paragraph*{Oracle samplers and global geometry.}
For a convex body specified by a well-defined membership oracle
$B_1(x_0)\subseteq K\subseteq B_D(x_0)$, the restart version of
In-and-Out, initialized from an $M$-warm law, achieves for every
$q\ge2$ a finite-order R\'enyi guarantee using
\[
 \widetilde O\!\left(
 qMd^2\Lambda\log^6\frac1\epsilon
 \right),
 \qquad
 \Lambda:=\norm{\operatorname{cov}\pi}_{\op},
\]
expected membership queries
\cite[Theorem~3 and Corollary~4]{KVZ26}. For an arbitrary log-concave
density $\pi\propto e^{-V}$ given by an evaluation oracle,
\cite[Theorem~1.1]{KVUnified26} assumes
\[
 B(0,1)\subseteq\{x:V(x)-\inf V\le10d\},
 \qquad
 M_q:=\norm{\dd\pi_0/\dd\pi}_{L^q(\pi)}\le10,
\]
and gives
$\widetilde O(qd^2\Lambda\log^3(1/\epsilon))$ expected evaluation
queries, where
\[
 q\ge\max\!\left\{2,
 \widetilde\Omega\!\left(
 \log\bigl(d^2\Lambda\log(1/\epsilon)\bigr)
 \right)\right\}.
\]
Algorithmic diffusion and its subsequent cold-start improvements are
developed in \cite{KVDiff25,KVCold25}. Lee and Vempala prove an
$O(\sqrt d)$ Poincar\'e bound for isotropic log-concave measures and
derive a $\widetilde O(d^{5/2})$ warm-start bound for the ball walk
\cite{LeeVempala24}; this is global Euclidean isoperimetric context,
distinct from the local barrier geometry used here.

These oracle results concern different algorithms and include global
normalization and covariance parameters. Our result instead counts
iterations of an affine-invariant local chain given an explicit
inequality description, requires no global rounding, and excludes the
arithmetic cost of computing the Lee--Sidford metric. It is therefore
neither an oracle-complexity comparison nor a claim to be the first
quadratic-dimension sampler for general log-concave targets.

\subsection{Proof strategy and contributions}

The proof first closes the abstract mixing bridge for the present
linear targets. Lemma~\ref{lem:linearTargetOverlap} works directly
with the common accepted density, rather than with two separate
rejection estimates. It constructs good events for both starting
points, controls the covariance and displaced-mean terms in the
comparison of $q_x$ and $q_y$, intersects the events with a common
target halfspace, and keeps the interior condition explicit. This is
the step that removes the function-counterpart hypothesis used in the
general framework of \cite{KV24}.

The ASC proof then follows the higher-order route of
\cite[Sections~1.2--1.3 and~3]{Kook26}. That work introduced the
selective bottleneck recursion, moving orthonormal frames, and the
Wiener-chaos organization, and showed how fixed-order estimates improve
the dimension exponent. Our additions are uniform analytic frame
jets, an all-order projection-weighted Wick-contraction theorem, and a
logarithmic-order summation of the selective recursion. Here $H_j$ is
the $j$th bottleneck polynomial defined in \eqref{eq:Hj}, and
$\rho_p\asymp p^{-3}$ is the analytic radius in \eqref{eq:rhop}. A
direct pathwise bound gives $|H_4(t)|\lesssim d^{5/2}$, whereas at the
base point we prove
\begin{equation}
 \norm{H_4(0)}_{L^2}\le p^{O(1)}d^2,
 \qquad
 \norm{H_j(0)}_{L^2}
 \le p^2(Cj)^{Cj}\rho_p^{-(j-1)}(d+j)^{j/2}
 \quad(j\ge2).
 \label{eq:introHj}
\end{equation}
The natural $d^{j/2}$ scale is used only over the logarithmic range of
$j$ required by the proof.

The raw-to-Wick expansion, chaos orthogonality and product formula for
multiple Wiener integrals are classical
\cite{Janson97,Nualart06,PeccatiTaqqu11}. Our new probabilistic input
is a projection-weighted contraction estimate adapted to a Parseval
frame. If $U^\transpose U=I_d$, $P=UU^\transpose$ and
$W=\Diag(\diag P)$, then
\[
 P\preceq I_m,\qquad
 W^{-1/2}P^{\had2}W^{-1/2}\preceq I_m,\qquad
 \sum_iw_i=d.
\]
The first two identities control the zero and one contractions. After
$r$ Gaussian traces, the double contraction has size
$d^{r+1}d^{(n-2r-2)/2}=d^{n/2}$, avoiding the extra factor
$d^{1/2}$. This is a standalone weighted Gaussian-polynomial
statement under projection and relative-row hypotheses, not a theorem
for arbitrary weights.

The proof follows one dependency chain. Proposition~\ref{prop:framework}
reduces Theorem~\ref{thm:main} to strong and lower-trace
self-concordance, symmetry, and a dimension-free ASC modulus for the
scaled metric. Propositions~\ref{prop:knownregularity} and
\ref{prop:scaling} verify the deterministic hypotheses, so the main
task is Theorem~\ref{thm:asc}. Lemma~\ref{lem:selective-expansion}
decomposes the metric discrepancy into base-point terms, controlled
path terms, and one terminal remainder. Theorem~\ref{thm:jets}
supplies all-order derivatives of $N_x$; together with
Theorem~\ref{thm:weightedWick}, it yields the base-point estimate in
Theorem~\ref{thm:Hbase}. Lemma~\ref{lem:pathbounds} controls the
remaining path terms, and Lemma~\ref{lem:constants} chooses a
logarithmic truncation order that absorbs the terminal $\sqrt d$ loss.
Lemma~\ref{lem:scaledASC} and Proposition~\ref{prop:framework} then
complete the proof of Theorem~\ref{thm:main}.

\paragraph*{Notation.}
Unless a subscript indicates otherwise, vector norms are Euclidean.
We use $\norm{\cdot}_{\op}$, $\norm{\cdot}_{\mathrm F}$, and
$\norm{\cdot}_{\HS}$ for operator, Frobenius, and Hilbert--Schmidt
norms, and $\norm{u}_{g(x)}^2=u^\transpose g(x)u$ for a local metric
norm. The symbols $d_{\rm TV}$ and $D_{\rm KL}$ denote total variation
distance and relative entropy. The identity on $\R^k$ is $\Id_k$;
$u^\transpose$ is transpose; $\Diag(u)$ forms a diagonal matrix and
$\diag(M)$ extracts its diagonal; and $M^{\had2}=M\had M$ denotes the
Hadamard square. For symmetric matrices, $B\preceq C$ means that
$C-B$ is positive semidefinite. The derivative $D^kF(x)$ is viewed as
a symmetric $k$-linear map. We write $X\lesssim Y$ for $X\le CY$ with
a universal $C$, and $X\asymp Y$ when both comparisons hold. Matrix
trace is $\tr$; the iterated tensor trace $\Tr$ is defined in
Section~\ref{sec:wick}. Constants $C,c>0$ are universal and may change
from line to line. Finally, $\polylog(m,d)$ denotes a quantity bounded
by $C_0(1+\log(m+d+1))^{C_1}$ for universal $C_0,C_1>0$, which may
differ between occurrences.

\section{Lee--Sidford geometry and the mixing reduction}\label{sec:prelim}

This section fixes the Lee--Sidford metric, proves the abstract
accepted-density overlap and conductance reduction, and then verifies
the deterministic Lee--Sidford hypotheses. The remaining random input,
quantitative ASC, is proved in Sections~\ref{sec:selective}--\ref{sec:path}.

\subsection{The metric and Lewis calculus}

Let
\[
 K=\{x\in\R^d:Ax\ge b\}
\]
be bounded and full-dimensional. For $x\in\operatorname{int}K$, define
\[
 S_x=\Diag(Ax-b),\qquad A_x=S_x^{-1}A.
\]
Fix $p\ge4$ and put
\[
 \beta=\frac12-\frac1p,\qquad \alpha=2\beta=1-\frac2p,
 \qquad c_p=1-\frac2p.
\]
The $\ell_p$ Lewis weights $w_x\in\R_{++}^m$ are the unique positive
vector satisfying \cite[Section~5]{LS19}
\begin{equation}
 w_x=\diag\!\left(W_x^\beta A_x
 (A_x^\transpose W_x^{2\beta}A_x)^{-1}
 A_x^\transpose W_x^\beta\right),
 \qquad W_x=\Diag(w_x).
 \label{eq:lewis}
\end{equation}
The baseline Lee--Sidford metric is
\begin{equation}
 g_0(x)=A_x^\transpose W_x^{2\beta}A_x.
 \label{eq:g0}
\end{equation}
Set
\begin{equation}
 P_x=W_x^\beta A_xg_0(x)^{-1}A_x^\transpose W_x^\beta,
 \qquad \Lambda_x=W_x-P_x^{\had2},
 \label{eq:P-Lambda}
\end{equation}
where $P^{\had2}=P\had P$, and define
\begin{equation}
 \bar\Lambda_x=W_x^{-1/2}\Lambda_xW_x^{-1/2},
 \qquad
 N_x=2\bar\Lambda_x(\Id-c_p\bar\Lambda_x)^{-1}.
 \label{eq:Ndef}
\end{equation}
The standard Lewis identities \cite[Section~2.2]{Kook26} give
\begin{equation}
 P_x^{\had2}\preceq W_x,\qquad 0\preceq\bar\Lambda_x\preceq\Id,
 \qquad 0\preceq N_x\preceq p\Id,
 \qquad \sum_{i=1}^m w_{x,i}=d.
 \label{eq:basicLewis}
\end{equation}

\begin{lemma}[Analytic Lewis weights in logarithmic coordinates]
\label{lem:LewisAnalytic}
The map $x\mapsto \ell_x:=\log w_x$ is real analytic on
$\operatorname{int}K$. More precisely, for
$\ell\in\R^m$ set
\[
 W(\ell)=\Diag(e^{\ell_1},\ldots,e^{\ell_m}),\qquad
 B(x,\ell)=W(\ell)^\beta A_x
\]
and
\[
 P(x,\ell)=B(x,\ell)
 \bigl(B(x,\ell)^\transpose B(x,\ell)\bigr)^{-1}
 B(x,\ell)^\transpose.
\]
In a neighborhood of $(x,\ell_x)$, define
\begin{equation}
 \mathcal L(x,\ell)
 :=\ell-\log\diag P(x,\ell).
 \label{eq:LewisImplicitMap}
\end{equation}
Then $\mathcal L(x,\ell_x)=0$, and, at a solution,
\begin{align}
 D_\ell\mathcal L(x,\ell_x)
 &=\Id-c_pW_x^{-1}\Lambda_x\notag\\
 &=W_x^{-1/2}
 \bigl(\Id-c_p\bar\Lambda_x\bigr)W_x^{1/2}.
 \label{eq:LewisJacobian}
\end{align}
In particular, this Jacobian is invertible, with
\begin{equation}
 \bigl(D_\ell\mathcal L(x,\ell_x)\bigr)^{-1}
 =W_x^{-1/2}
 \bigl(\Id-c_p\bar\Lambda_x\bigr)^{-1}W_x^{1/2}.
 \label{eq:LewisJacobianInverse}
\end{equation}
\end{lemma}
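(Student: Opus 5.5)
The plan is to apply the real-analytic implicit function theorem to $\mathcal L$. This requires three things: that $\mathcal L$ is real analytic near $(x,\ell_x)$, that $\mathcal L(x,\ell_x)=0$, and that $D_\ell\mathcal L$ has the stated invertible form there.

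\emph{Analyticity.} On $\operatorname{int}K$ the slacks $Ax-b$ are positive, so $A_x=S_x^{-1}A$ is real analytic in $x$. The factor $W(\ell)^\beta=\Diag(e^{\beta\ell_i})$ is entire in $\ell$. Since $A$ has full column rank and $W(\ell)^\beta S_x^{-1}$ is positive diagonal, the Gram matrix $B^\transpose B$ is positive definite, so its inverse is analytic. Hence $P(x,\ell)$ is analytic. Each diagonal entry $P_{ii}=\norm{(B^\transpose B)^{-1/2}B^\transpose e_i}^2$ is strictly positive because $A$ has no zero row, so $\log\diag P$ is analytic on the whole domain.

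\emph{Zero at the Lewis weights.} The identity $\mathcal L(x,\ell_x)=0$ is just \eqref{eq:lewis} after taking logarithms, because $B(x,\ell_x)^\transpose B(x,\ell_x)=g_0(x)$ and so $P(x,\ell_x)=P_x$.

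\emph{The Jacobian.} Perturb $\ell\mapsto\ell+t h$ and write $H=\Diag(h)$. Then $\dot B=\beta HB$, and the derivative of the orthogonal projection is
\[
\dot P=\beta\bigl(HP+PH-2PHP\bigr).
\]
Taking diagonals and using the symmetry of $P$ gives $\diag(HP)=\diag(PH)=h\had\diag P$ and $\diag(PHP)=P^{\had2}h$. At $\ell_x$ we have $\diag P=w$, so
\[
\frac{\dd}{\dd t}\log\diag P=W^{-1}\,2\beta\,(Wh-P^{\had2}h)=\alpha W^{-1}\Lambda_x h .
\]
Since $\alpha=c_p$, this yields $D_\ell\mathcal L=\Id-c_pW_x^{-1}\Lambda_x$. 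The similarity form in \eqref{eq:LewisJacobian} follows by writing $W^{-1}\Lambda=W^{-1/2}\bar\Lambda_xW^{1/2}$.

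\emph{Invertibility.} By \eqref{eq:basicLewis}, $0\preceq\bar\Lambda_x\preceq\Id$ and $c_p<1$, so $\Id-c_p\bar\Lambda_x\succeq(1-c_p)\Id=\frac2p\Id$. This matrix is therefore invertible, and inverting the similarity gives \eqref{eq:LewisJacobianInverse}.

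\emph{Conclusion.} The analytic implicit function theorem then produces a real-analytic local solution $\ell(x')$ with $\mathcal L(x',\ell(x'))=0$. We identify it with $\ell_{x'}$ using the uniqueness of Lewis weights, which holds because any positive solution of $\mathcal L=0$ is a fixed point of \eqref{eq:lewis}. Since this works around every interior point, $x\mapsto\ell_x$ is analytic on $\operatorname{int}K$.

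The main obstacle is the Jacobian computation, where the diagonal extraction of $PHP$ and the bookkeeping of the factor $2\beta$ have to be done carefully. A secondary point is the identification with $\ell_{x'}$, which relies on the cited uniqueness result from \cite{LS19}.
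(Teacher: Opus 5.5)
Your proposal is correct and follows essentially the same route as the paper: joint analyticity of $\mathcal L$ from the positivity of $\diag P$, the projection-variation formula (your $\beta(HP+PH-2PHP)$ is the paper's $(\Id-P)\Diag(z)P+P\Diag(z)(\Id-P)$ with $z=\beta h$), and invertibility from $\Id-c_p\bar\Lambda_x\succeq(2/p)\Id$. You then conclude, as the paper does, with the analytic implicit function theorem and the uniqueness of positive Lewis weights.
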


\begin{proof}
Every row of $A_x$ is nonzero, so every diagonal entry of
$P(x,\ell)$ is positive. Thus \eqref{eq:LewisImplicitMap} is real
analytic wherever $x\in\operatorname{int}K$ and $\ell\in\R^m$.
We compute its weight Jacobian. If a full-column-rank matrix $B$
varies according to $\dot B=\Diag(z)B$, then its orthogonal
projection $P=B(B^\transpose B)^{-1}B^\transpose$ satisfies
\begin{equation}
 \dot P=(\Id-P)\Diag(z)P+P\Diag(z)(\Id-P).
 \label{eq:projectionVariation}
\end{equation}
Taking diagonal entries gives
\begin{equation}
 \diag\dot P=2\bigl(W-P^{\had2}\bigr)z=2\Lambda z,
 \qquad W=\Diag(\diag P).
 \label{eq:projectionDiagonalVariation}
\end{equation}
For a variation $\dot\ell=\zeta$ at a Lewis solution,
$\dot B=\beta\Diag(\zeta)B$. Hence
\[
 D_\ell\log\diag P(x,\ell_x)[\zeta]
 =2\beta W_x^{-1}\Lambda_x\zeta
 =c_pW_x^{-1}\Lambda_x\zeta,
\]
which proves \eqref{eq:LewisJacobian}. Since
$0\preceq\bar\Lambda_x\preceq\Id$ and $1-c_p=2/p>0$,
$\Id-c_p\bar\Lambda_x$ is positive definite. This proves
\eqref{eq:LewisJacobianInverse}.

The real analytic implicit-function theorem \cite{KrantzParks02} now
produces a unique analytic solution branch near every
$x\in\operatorname{int}K$. The positive
Lewis weights are unique, so any two local branches agree on their
overlap. These branches therefore patch to one real analytic map on
$\operatorname{int}K$.
\end{proof}

For $r\in\R^d$, write
$s_{x,r}=A_xr$ and $v_{x,r}=W_x^{1/2}s_{x,r}$. Differentiating
\eqref{eq:LewisImplicitMap} also recovers the Lewis derivative
identity. Indeed, at fixed $\ell$ one has
$D_xB[r]=-\Diag(s_{x,r})B$, so
\eqref{eq:projectionDiagonalVariation} gives
\[
 \bigl(\Id-c_pW_x^{-1}\Lambda_x\bigr)D\ell_x[r]
 =-2W_x^{-1}\Lambda_xs_{x,r}.
\]
Using \eqref{eq:LewisJacobianInverse}, the commutativity of
$\bar\Lambda_x$ and
$(\Id-c_p\bar\Lambda_x)^{-1}$, and \eqref{eq:Ndef}, we obtain
\begin{align}
 D\log w_x[r]
 &=-W_x^{-1/2}N_xv_{x,r},\notag\\
 D W_x[r]
 &=-\Diag\!\left(W_x^{1/2}N_xv_{x,r}\right).
 \label{eq:LewisDerivative}
\end{align}
Existence, uniqueness, strict positivity and the first-derivative
identity are part of the Lewis-weight calculus in
\cite[Section~5, in particular Lemma~24]{LS19}; the notation and the
bounds in \eqref{eq:basicLewis} are recorded explicitly in
\cite[Section~2.2]{Kook26}.

\subsection{Abstract local overlap and mixing reduction}

For reference, a $C^2$ metric $g$ is strongly self-concordant (SSC)
if, for every $x\in\operatorname{int}K$ and $h\in\R^d$,
\[
 \norm{g(x)^{-1/2}Dg(x)[h]g(x)^{-1/2}}_{\mathrm F}
 \le 2\norm{h}_{g(x)},
\]
and lower-trace self-concordant (LTSC) if, for the same $x$ and $h$,
\[
 \tr\bigl(g(x)^{-1}D^2g(x)[h,h]\bigr)\ge-\norm{h}_{g(x)}^2.
\]
The SSC inequality also implies the usual matrix self-concordance
inequality because the operator norm is bounded by the Frobenius
norm.
It is $\bar\nu$-symmetric when
\[
 D_g(x,1)\subseteq K\cap(2x-K)\subseteq D_g(x,\sqrt{\bar\nu})
\]
for every $x\in\operatorname{int}K$, where
$D_g(x,r)=\{y:\norm{y-x}_{g(x)}\le r\}$.
These sampling regularity notions follow \cite{LLV20,KV24}.

The next lemma is the point at which linearity of the target replaces
the function-counterpart assumption in the general Dikin framework.
It proves overlap through one common accepted mass and records every
interior and reverse-proposal event explicitly.

\begin{lemma}[One-step overlap for a linear exponential target]
\label{lem:linearTargetOverlap}
There are universal constants
$\eps_{\rm ov}\in(0,10^{-2})$ and $c_{\rm ov},c_{\rm near}>0$
with the following property. Let $g$ be strongly self-concordant and
lower-trace self-concordant, and suppose that
$R_{\eps_{\rm ov}}(g)\ge a$ for some $a>0$.  Put
\[
 0<\varrho\le c_{\rm ov}(1\wedge a).
\]
For every $\vartheta\in\R^d$ and $x\in\operatorname{int}K$, let
$\mathsf P_x$ be the one-step non-lazy Metropolis Dikin law started at
$x$, with proposal
$N(x,\varrho^2g(x)^{-1}/d)$ and target
$\pi_\vartheta\propto e^{-\vartheta^\transpose x}\mathbf1_K(x)$.
Then, for all $x,y\in\operatorname{int}K$,
\begin{equation}
 \norm{x-y}_{g(x)}
 \le \frac{c_{\rm near}\varrho}{\sqrt d}
 \quad\Longrightarrow\quad
 d_{\rm TV}(\mathsf P_x,\mathsf P_y)\le0.8 .
 \label{eq:linearTargetOverlap}
\end{equation}
The constants are independent of $\vartheta$, $d$, and the instance.
No function whose Hessian is comparable to $g$ is assumed.
\end{lemma}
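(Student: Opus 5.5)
The plan is to bound $1-d_{\rm TV}(\mathsf P_x,\mathsf P_y)$ from below by the common accepted mass. The accepted part of $\mathsf P_x$ has sub-density
\[
 \min\bigl\{q_x(z),\,\pi_\vartheta(z)q_z(x)/\pi_\vartheta(x)\bigr\}
\]
on $\operatorname{int}K$. Hence
\[
 1-d_{\rm TV}(\mathsf P_x,\mathsf P_y)\ge\int_{\operatorname{int}K}\min\Bigl\{q_x(z),\tfrac{\pi(z)}{\pi(x)}q_z(x),q_y(z),\tfrac{\pi(z)}{\pi(y)}q_z(y)\Bigr\}\dd z .
\]
It therefore suffices to exhibit a set $G$ with $\int_G q_x\ge c$ on which each of the other three terms is at least a constant multiple of $q_x(z)$. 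I would take $G=G_x\cap G_y\cap H$, and the proof should show $\int_G q_x(z)\dd z\ge 0.3$ with all ratios at least $2/3$. This gives overlap at least $0.2$, hence total variation at most $0.8$.

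\emph{Step 1 (comparison of $q_y$ with $q_x$).} Since $\norm{x-y}_{g(x)}\le c_{\rm near}\varrho/\sqrt d$ is a tiny distance, SSC (integrated along the segment) gives $\norm{g(x)^{-1/2}(g(y)-g(x))g(x)^{-1/2}}_{\rm F}\lesssim c_{\rm near}\varrho/\sqrt d$. The log-ratio $\log q_y(z)-\log q_x(z)$ splits into three parts.
\begin{itemize}
\item[(i)] The log-determinant difference.
\item[(ii)] The covariance term $-\tfrac d{2\varrho^2}(z-x)^\transpose(g(y)-g(x))(z-x)$. This is a Gaussian quadratic form with mean $\tfrac12\tr(g(x)^{-1}(g(y)-g(x)))$ and fluctuation of order $\norm{\cdot}_{\rm F}$.
\item[(iii)] The displaced-mean term, which is linear in $z-x$ with standard deviation of order $\sqrt d\norm{x-y}_{g}/\varrho\lesssim c_{\rm near}$, plus a constant.
\end{itemize}
The trace in (ii) and the log-determinant in (i) cancel to second order. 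Their residual is controlled by a second-order expansion along the segment; LTSC bounds the trace of the second derivative from below, and SSC bounds the quadratic Frobenius term. Everything is then $O(c_{\rm near}^2+c_{\rm near})$ outside a Chebyshev event of $q_x$-probability $\le0.05$. This defines the event $G_x^{(1)}$.

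\emph{Step 2 (reverse proposals via ASC).} Write
\[
 \log\frac{q_z(x)}{q_x(z)}=\tfrac12\log\det g(z)-\tfrac12\log\det g(x)-\tfrac d{2\varrho^2}\bigl(\norm{z-x}_{g(z)}^2-\norm{z-x}_{g(x)}^2\bigr).
\]
Since $\varrho\le c_{\rm ov}a\le a$, Definition~\ref{def:qasc} with $s=\varrho$ bounds the quadratic discrepancy by $\eps_{\rm ov}$, and guarantees $z\in\operatorname{int}K$, with probability $\ge1-\eps_{\rm ov}$.

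For the log-determinant I would use a Taylor expansion along $t\mapsto x+t(z-x)$.
\begin{itemize}
\item The first-order term $\tr(g^{-1}Dg[z-x])$ is a centered Gaussian of variance $\lesssim\varrho^2/d\cdot d=\varrho^2$ by SSC, so it is small for $c_{\rm ov}$ small.
\item The second derivative of $\log\det g$ is $\tr(g^{-1}D^2g)-\norm{g^{-1/2}Dg\,g^{-1/2}}_{\rm F}^2$. LTSC bounds the first part from below and SSC bounds the second by $4\norm{h}^2$. Hence the second-order term is $\ge -C\norm{z-x}_{g}^2\approx -C\varrho^2$.
\end{itemize}
The one-sided bound suffices, because we only need $q_z(x)/q_x(z)$ bounded below. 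To apply SSC along the path I need $g(x+t(z-x))\asymp g(x)$, which holds since $\norm{z-x}_{g(x)}\approx\varrho<1/2$ with high probability (self-concordance). The same argument, applied from $y$ with $q_y$ replaced via Step 1, handles the term $q_z(y)$; the corresponding event is $G_y$, pulled back to $q_x$-measure using Step 1's density ratio.

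\emph{Step 3 (target ratio).} Here linearity of the target is used. The factor $\pi(z)/\pi(x)=e^{-\vartheta^\transpose(z-x)}$ is not small in general, but I take $H=\{z:\vartheta^\transpose(z-x)\le0,\ \vartheta^\transpose(z-y)\le0\}$. Under $q_x$, $\vartheta^\transpose(z-x)$ is a symmetric Gaussian, so $\{\vartheta^\transpose(z-x)\le0\}$ has probability $1/2$. The shift $\vartheta^\transpose(y-x)$ is compared with the standard deviation $\varrho\norm{\vartheta}_{g(x)^{-1}}/\sqrt d$, which is at least $\norm{x-y}_{g(x)}\norm{\vartheta}_{g^{-1}}/c_{\rm near}$ times $c_{\rm near}^{-1}$ scale. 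Consequently the intersection of the two halfspaces has $q_x$-mass $\ge 1/2-O(c_{\rm near})$.

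On $H$ both target ratios are $\ge1$, so the minimum reduces to proposal ratios. Intersecting, $\int_G q_x\ge 1/2-O(c_{\rm near})-0.05-2\eps_{\rm ov}-\ldots\ge0.3$.

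The main obstacle I expect is Step 2's log-determinant control without any function counterpart. It needs the one-sided LTSC+SSC expansion, with the metric kept comparable along the random segment and the first-order Gaussian term kept at $O(\varrho)$. The same step must then be transferred to the reverse move from $y$.
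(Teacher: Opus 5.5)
Your proposal is correct and is essentially the paper's proof. It uses the same ingredients: the common accepted mass, the ASC event together with the one-sided LTSC/SSC Taylor bound on $\tfrac12\log\det g$ for each reverse proposal, a Gaussian comparison of $q_x$ with $q_y$, and the common halfspace $\{z:\vartheta^\transpose z\le\min(\vartheta^\transpose x,\vartheta^\transpose y)\}$ on which both target ratios are at least one. The differences are cosmetic: the paper bounds $d_{\rm TV}(q_x,q_y)$ via KL and Pinsker and uses $q_{s_\star}(H)=1/2$ exactly, whereas you work pointwise under $q_x$ with a density-ratio event and the shift bound $\abs{\vartheta^\transpose(y-x)}\le c_{\rm near}\varrho\norm{\vartheta}_{g(x)^{-1}}/\sqrt d$. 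Also, in your Step~1 the log-determinant/trace residual follows directly from $\log(1+\lambda)-\lambda\ge-\lambda^2$ for $\abs\lambda\le1/2$ on the eigenvalues of $g(x)^{-1/2}(g(y)-g(x))g(x)^{-1/2}$, so LTSC is not needed there.
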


\begin{proof}
We prove the overlap directly through the common accepted density.
Fix
\[
 \eps_{\rm ov}=2^{-10},\qquad
 \eta_0=2^{-8},\qquad t_0=\frac1{32}.
\]
Take $B=16$. Since $\E\norm{\xi}^2=d$, Markov's inequality gives,
for every $d\ge1$ and $\xi\sim N(0,\Id_d)$,
\begin{equation}
 \Pp\{\norm{\xi}>B\sqrt d\}\le\eta_0.
 \label{eq:GaussianNormChoice}
\end{equation}
We choose $c_{\rm ov}$ and $c_{\rm near}$ below; all restrictions on
them are universal.

For $s\in\{x,y\}$, let $q_s$ be the density of
\[
 N\!\left(s,\frac{\varrho^2}{d}g(s)^{-1}\right),
 \qquad u_s=z-s,
\]
and set $\phi=\frac12\log\det g$. For
$z\in\operatorname{int}K$, the logarithm of the Metropolis ratio from
$s$ is
\begin{align}
 \Lambda_s(z)
 &:={}
 \log\frac{\pi_\vartheta(z)q_z(s)}
 {\pi_\vartheta(s)q_s(z)}
 \notag\\
 &=-\vartheta^\transpose u_s+\phi(z)-\phi(s)
 -\frac{d}{2\varrho^2}
 \left(
 \norm{u_s}_{g(z)}^2-\norm{u_s}_{g(s)}^2
 \right).
 \label{eq:linearMetropolisLogRatio}
\end{align}

We first construct a good event for each starting point. The
restrictions below impose $c_{\rm ov}<1$, so
$\varrho<a$. The set of admissible ASC radii is downward closed;
hence Definition~\ref{def:qasc} applies at radius $\varrho$ and gives
an event
\[
 \mathcal A_s:=
 \left\{
 z\in\operatorname{int}K:\
 \abs{
 \norm{u_s}_{g(z)}^2-\norm{u_s}_{g(s)}^2
 }
 \le\frac{2\eps_{\rm ov}\varrho^2}{d}
 \right\}
\]
such that
\begin{equation}
 q_s(\mathcal A_s^c)\le\eps_{\rm ov}.
 \label{eq:ASCStartEvent}
\end{equation}
On $\mathcal A_s$, the final term in
\eqref{eq:linearMetropolisLogRatio} is at least $-\eps_{\rm ov}$.

We next control the determinant term. Matrix differentiation gives
\begin{align}
 D\phi(w)[v]
 &=\frac12\tr\!\left(g(w)^{-1}Dg(w)[v]\right),
 \label{eq:logdetFirst}\\
 D^2\phi(w)[v,v]
 &=\frac12\tr\!\left(g(w)^{-1}D^2g(w)[v,v]\right)
 \notag\\
 &\hspace{12mm}
 -\frac12\norm{
 g(w)^{-1/2}Dg(w)[v]g(w)^{-1/2}
 }_{\mathrm F}^2 .
 \label{eq:logdetSecond}
\end{align}
LTSC and SSC imply
\begin{equation}
 D^2\phi(w)[v,v]\ge-\frac52\norm{v}_{g(w)}^2,
 \qquad
 \norm{D\phi(w)}_{g(w)^{-1}}\le\sqrt d.
 \label{eq:logdetDifferentialBounds}
\end{equation}
Indeed, the first term in \eqref{eq:logdetSecond} is at least
$-\frac12\norm{v}_{g(w)}^2$, while SSC bounds the second one below by
$-2\norm{v}_{g(w)}^2$. The second inequality follows from
$\abs{\tr M}\le\sqrt d\norm{M}_{\mathrm F}$.

Define
\[
 \mathcal N_s:=\{\norm{u_s}_{g(s)}\le B\varrho\},
 \qquad
 \mathcal D_s:=\{D\phi(s)[u_s]\ge-t_0\}.
\]
Writing
$u_s=(\varrho/\sqrt d)g(s)^{-1/2}\xi$ and using
\eqref{eq:GaussianNormChoice} gives
\begin{equation}
 q_s(\mathcal N_s^c)\le\eta_0.
 \label{eq:GaussianNormStart}
\end{equation}
Moreover, $D\phi(s)[u_s]$ is centered Gaussian and, by
\eqref{eq:logdetDifferentialBounds}, has variance at most
$\varrho^2$. We may therefore choose $c_{\rm ov}$ sufficiently small
that
\begin{equation}
 \sup_{0<\varrho\le c_{\rm ov}}
 q_s(\mathcal D_s^c)\le\eta_0.
 \label{eq:GaussianDeterminantStart}
\end{equation}

On $\mathcal A_s\cap\mathcal N_s$, both endpoints of $[s,z]$ lie in
$\operatorname{int}K$, and convexity places the whole segment in the
interior. Put $\delta_s=\norm{u_s}_{g(s)}$. The matrix
self-concordance consequence needed here follows directly from SSC.
The case $u_s=0$ is immediate. Otherwise, if
$r_s(t)=\norm{u_s}_{g(s+tu_s)}$, then differentiation and the SSC
bound give
\[
 \abs{(r_s^2)'(t)}
 \le 2r_s(t)^3,
 \qquad\text{hence}\qquad
 \abs{r_s'(t)}\le r_s(t)^2.
\]
Since $r_s(0)=\delta_s$, comparison with the scalar differential
equation gives, for $0\le t\le1$,
\begin{equation}
 \norm{u_s}_{g(s+tu_s)}
 \le\frac{\delta_s}{1-t\delta_s}.
 \label{eq:pathMetricComparison}
\end{equation}
Impose
\[
 Bc_{\rm ov}\le\frac12,
 \qquad
 5B^2c_{\rm ov}^2\le\frac1{32}.
\]
Taylor's formula, \eqref{eq:logdetDifferentialBounds}, and
\eqref{eq:pathMetricComparison} yield
\begin{align}
 \phi(z)-\phi(s)-D\phi(s)[u_s]
 &=\int_0^1(1-t)
 D^2\phi(s+tu_s)[u_s,u_s]\dd t
 \notag\\
 &\ge
 -\frac52\delta_s^2
 \int_0^1\frac{1-t}{(1-t\delta_s)^2}\dd t
 \ge-5\delta_s^2.
 \label{eq:logdetTaylorLower}
\end{align}
Consequently, on
$\mathcal G_s:=\mathcal A_s\cap\mathcal N_s\cap\mathcal D_s$,
\begin{equation}
 \phi(z)-\phi(s)\ge-\frac1{16},
 \qquad
 q_s(\mathcal G_s^c)
 \le\eps_{\rm ov}+2\eta_0<\frac1{64}.
 \label{eq:startGoodEvent}
\end{equation}
Thus the two proposal-dependent terms in
\eqref{eq:linearMetropolisLogRatio} are at least $-1/8$ on
$\mathcal G_s$.

We also record the full comparison of the two forward Gaussian
densities. Set
\[
 \delta=\norm{x-y}_{g(x)}
 \le\frac{c_{\rm near}\varrho}{\sqrt d}.
\]
After decreasing $c_{\rm near}$, we have $\delta\le1/4$. Integrating
the SSC inequality along $[x,y]$ gives
\begin{equation}
 (1-\delta)^2g(x)
 \preceq g(y)
 \preceq(1-\delta)^{-2}g(x).
 \label{eq:nearbyMetricComparison}
\end{equation}
To see this without invoking a function counterpart, put
$h=y-x$ and $\gamma(t)=x+th$. The preceding scalar comparison gives
$\norm{h}_{g(\gamma(t))}\le\delta/(1-t\delta)$. For every fixed
$v\ne0$, SSC therefore implies
\[
 \abs{\frac{\dd}{\dd t}
 \log\bigl(v^\transpose g(\gamma(t))v\bigr)}
 \le\frac{2\delta}{1-t\delta}.
\]
Integration from $0$ to $1$ proves
\eqref{eq:nearbyMetricComparison}.
Let $M=g(x)^{-1/2}g(y)g(x)^{-1/2}$. The Gaussian relative entropy
formula \cite{CoverThomas06} is
\begin{equation}
 D_{\rm KL}(q_x\Vert q_y)
 =\frac12\left\{
 \tr M-d-\log\det M
 +\frac{d}{\varrho^2}\norm{x-y}_{g(y)}^2
 \right\}.
 \label{eq:nearbyGaussianKL}
\end{equation}
Every eigenvalue $\lambda$ of $M$ lies in
$[(1-\delta)^2,(1-\delta)^{-2}]$, so
$\lambda-1-\log\lambda\le C\delta^2$. Hence
\[
 D_{\rm KL}(q_x\Vert q_y)
 \le C\left(d\delta^2+\frac{d\delta^2}{\varrho^2}\right)
 \le Cc_{\rm near}^2.
\]
Pinsker's inequality \cite[Lemma~2.5]{Tsybakov09} gives
\begin{equation}
 d_{\rm TV}(q_x,q_y)\le Cc_{\rm near}.
 \label{eq:nearbyGaussianTV}
\end{equation}
Choose $c_{\rm near}$ so that the right side is at most $1/64$.
This calculation controls both the covariance and displaced-mean
quadratic terms.

It remains to pass from proposal overlap to accepted-density overlap.
For $s\in\{x,y\}$, define the accepted density without evaluating
$g(z)$ outside its domain by
\begin{align}
 k_s(z)
 &:={}
 \begin{cases}
 q_s(z)\bigl(1\wedge e^{\Lambda_s(z)}\bigr),
 &z\in\operatorname{int}K,\\
 0,&z\notin\operatorname{int}K,
 \end{cases}
 \notag\\
 &=
 \begin{cases}
 \min\left\{
 q_s(z),e^{-\vartheta^\transpose(z-s)}q_z(s)
 \right\},
 &z\in\operatorname{int}K,\\
 0,&z\notin\operatorname{int}K.
 \end{cases}
 \label{eq:acceptedDensity}
\end{align}
Thus
\begin{equation}
 \mathsf P_s
 =\left(1-\int_{\R^d}k_s(z)\dd z\right)\delta_{\{s\}}
 +k_s(z)\dd z.
 \label{eq:kernelAcceptedDecomposition}
\end{equation}
Here $\delta_{\{s\}}$ is the unit point mass at $s$.

Choose $s_\star\in\{x,y\}$ so that
\[
 \vartheta^\transpose s_\star
 =\min\{\vartheta^\transpose x,\vartheta^\transpose y\},
\]
and define the common halfspace
\[
 H:=\{z:\vartheta^\transpose z
 \le\vartheta^\transpose s_\star\}.
\]
For every $z\in H$ and $s\in\{x,y\}$,
$-\vartheta^\transpose(z-s)\ge0$. If $\vartheta\ne0$, symmetry of
the centered Gaussian proposal gives $q_{s_\star}(H)=1/2$; if
$\vartheta=0$, then $H=\R^d$. Thus
\begin{equation}
 q_{s_\star}(H)\ge\frac12.
 \label{eq:commonTargetHalfspace}
\end{equation}
On $H\cap\mathcal G_s$, all quantities in
\eqref{eq:linearMetropolisLogRatio} are defined,
$\Lambda_s(z)\ge-1/8$, and therefore
\begin{equation}
 k_s(z)\ge e^{-1/8}q_s(z).
 \label{eq:acceptedDensityLower}
\end{equation}

Let $m(z)=\min\{q_x(z),q_y(z)\}$. Since $m\le q_s$,
\begin{align}
 \int_{\R^d}\min\{k_x(z),k_y(z)\}\dd z
 &\ge e^{-1/8}
 \int_{H\cap\mathcal G_x\cap\mathcal G_y}m(z)\dd z
 \notag\\
 &\ge e^{-1/8}\left(
 \int_Hm(z)\dd z
 -q_x(\mathcal G_x^c)-q_y(\mathcal G_y^c)
 \right).
 \label{eq:commonAcceptedMassFirst}
\end{align}
Moreover,
\begin{align}
 \int_Hm(z)\dd z
 &\ge q_{s_\star}(H)
 -\int_{\R^d}\bigl(q_{s_\star}(z)-m(z)\bigr)\dd z
 \notag\\
 &=q_{s_\star}(H)-d_{\rm TV}(q_x,q_y)
 \ge\frac12-\frac1{64}.
 \label{eq:commonProposalHalfspace}
\end{align}
Equations \eqref{eq:startGoodEvent},
\eqref{eq:commonAcceptedMassFirst}, and
\eqref{eq:commonProposalHalfspace} give
\[
 \int_{\R^d}\min\{k_x(z),k_y(z)\}\dd z
 \ge e^{-1/8}\left(\frac12-\frac3{64}\right)
 >\frac15.
\]

If $x=y$, there is nothing to prove. If $x\ne y$, the rejection atoms
in \eqref{eq:kernelAcceptedDecomposition} are supported at distinct
points, while the accepted parts are absolutely continuous.
Therefore their common mass is exactly the integral above, and
\[
 d_{\rm TV}(\mathsf P_x,\mathsf P_y)
 =1-\int_{\R^d}\min\{k_x(z),k_y(z)\}\dd z
 <\frac45.
\]
This proves \eqref{eq:linearTargetOverlap}. The construction used
only SSC, LTSC, and the ASC event, including its interior condition;
no function counterpart is needed.
\end{proof}

Having obtained common accepted mass, it remains only to convert this
local overlap into global conductance through the symmetry geometry.

\begin{lemma}[Symmetry, cross-ratio separation, and conductance]
\label{lem:symmetryConductance}
Let $\pi$ be a log-concave probability distribution on a bounded
full-dimensional convex body $K$, and let $g$ be
$\bar\nu$-symmetric.  Suppose a reversible Markov kernel
$\mathsf K$ with stationary law $\pi$ satisfies, for some
$\Delta\in(0,1)$ and all $x,y\in\operatorname{int}K$,
\begin{equation}
 \norm{x-y}_{g(x)}\le\Delta
 \quad\Longrightarrow\quad
 d_{\rm TV}(\mathsf K_x,\mathsf K_y)\le0.9.
 \label{eq:localOverlapAbstract}
\end{equation}
Then its global stationary conductance satisfies
\begin{equation}
 \Phi(\mathsf K)\ge
 \frac{c\Delta}{\sqrt{\bar\nu}}
 \label{eq:symmetryConductance}
\end{equation}
for a universal constant $c>0$.
\end{lemma}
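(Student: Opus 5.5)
The plan follows the standard route from local overlap to conductance for symmetric barriers (Lov\'asz; Laddha--Lee--Vempala; Kook--Vempala). There are three ingredients: an isoperimetric inequality for $\pi$ in the cross-ratio distance, a comparison of cross-ratio distance with the local norm $\norm{\cdot}_{g(x)}$ coming from $\bar\nu$-symmetry, and the usual three-set argument.

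\emph{Step 1 (isoperimetry).} For a log-concave $\pi$ on a convex $K$, and any partition $K=S_1\cup S_2\cup S_3$, we have
\[
 \pi(S_3)\ge d_K(S_1,S_2)\,\pi(S_1)\pi(S_2).
\]
Here $d_K(u,v)=\frac{\abs{u-v}\abs{p-q}}{\abs{p-u}\abs{v-q}}$ is the Hilbert cross-ratio distance, and $p,q$ are the endpoints of the chord through $u,v$. This is the classical Lov\'asz--Vempala inequality, proved by localization to needles. We take it as known.

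\emph{Step 2 (cross-ratio versus local norm).} We claim that for $x,y\in\operatorname{int}K$,
\[
 d_K(x,y)\ge \norm{x-y}_{g(x)}/\sqrt{\bar\nu}.
\]
Let $h=y-x$, and let the chord through $x$ in direction $h$ exit at $x+\tau_+h$ and $x-\tau_-h$. The symmetric body $K\cap(2x-K)$ contains exactly the points $x+th$ with $\abs{t}\le\min(\tau_+,\tau_-)$. By the right inclusion of $\bar\nu$-symmetry, this gives
\[
 \min(\tau_+,\tau_-)\norm{h}_{g(x)}\le\sqrt{\bar\nu}.
\]
The cross-ratio is at least $\max(1/\tau_+,1/\tau_-)$ up to a factor $1$: indeed,
\[
 d_K\ge \frac{\abs{p-q}\,\abs{h}}{\abs{p-x}\abs{y-q}}\ge \frac{\abs h}{\min(\abs{x-p},\abs{y-q})},
\]
after checking which endpoint is nearer. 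The constant here needs care, because $y$ lies strictly inside and $\abs{y-q}\le\abs{x-q}$. Combining the two displays gives the claim with a universal constant. I expect this bookkeeping with the chord endpoints to be the only slightly delicate point.

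\emph{Step 3 (three-set argument).} Fix a measurable $S$ with $\pi(S)\le1/2$. Define
\[
 S_1=\{x\in S:\mathsf K_x(S^c)<0.05\},\qquad S_2=\{x\in S^c:\mathsf K_x(S)<0.05\},
\]
and let $S_3$ be the rest. If $x\in S_1$ and $y\in S_2$, then $d_{\rm TV}(\mathsf K_x,\mathsf K_y)\ge 1-0.1=0.9$. Strictly, we need the strict inequality $>0.9$, so the thresholds $0.05$ are replaced by $0.04$. By \eqref{eq:localOverlapAbstract} this forces $\norm{x-y}_{g(x)}>\Delta$, and then Step 2 gives $d_K(S_1,S_2)\ge\Delta/\sqrt{\bar\nu}$.

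We then split into two cases. If $\pi(S_1)\le\pi(S)/2$ or $\pi(S_2)\le\pi(S^c)/2$, reversibility gives
\[
 \int_S\mathsf K_x(S^c)\,\pi(\dd x)\ge c\,\pi(S)
\]
directly. Otherwise Step 1 yields
\[
 \pi(S_3)\ge\frac{\Delta}{\sqrt{\bar\nu}}\cdot\frac{\pi(S)\pi(S^c)}{4}.
\]
Since points of $S_3$ carry at least $0.04$ of escaping flow, and reversibility makes the flow from $S$ to $S^c$ equal to the flow from $S^c$ to $S$, we get
\[
 \int_S\mathsf K_x(S^c)\,\pi(\dd x)\ge\frac{0.04}{2}\,\pi(S_3)\ge\frac{c\Delta}{\sqrt{\bar\nu}}\,\pi(S).
\]
This uses $\Delta\le1\le\sqrt{\bar\nu}$. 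Dividing by $\pi(S)$ gives \eqref{eq:symmetryConductance}.
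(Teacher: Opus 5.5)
Your proposal is correct and follows the paper's proof essentially step for step. It uses the Lov\'asz--Vempala cross-ratio isoperimetry, then the bound $d_K(x,y)\ge\norm{x-y}_{g(x)}/\sqrt{\bar\nu}$ from the shorter half-chord at $x$ and the outer symmetry inclusion (your endpoint bookkeeping with $\abs{y-q}\le\abs{x-q}$ gives constant $1$, as in the paper), and finally the standard three-set partition with reversibility. Your explicit note that the easy case needs $\Delta\le1\le\sqrt{\bar\nu}$ is a point the paper uses only implicitly; the inequality $\bar\nu\ge1$ is forced by the nested inclusions $D_g(x,1)\subseteq D_g(x,\sqrt{\bar\nu})$.
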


\begin{proof}
Symmetry yields the required metric isoperimetry. For distinct
$x,y\in\operatorname{int}K$, let the
line through them meet $\partial K$ at $p,q$, ordered as
$p,x,y,q$, and let $d_K(x,y)$ denote the cross-ratio distance
\[
 d_K(x,y)
 =\frac{\norm{x-y}_2\norm{p-q}_2}
 {\norm{p-x}_2\norm{y-q}_2}.
\]
Let $u$ be the unit vector along this line and let
$t_x$ be the smaller of the two Euclidean distances from $x$ to the
endpoints.  Both $x+t_xu$ and $x-t_xu$ lie in $K$, so
$t_xu\in K\cap(2x-K)-x$.  The outer symmetry inclusion gives
$t_x\norm u_{g(x)}\le\sqrt{\bar\nu}$.  Directly from the displayed
cross-ratio formula,
\begin{equation}
 d_K(x,y)\ge
 \frac{\norm{x-y}_2}{t_x}
 \ge\frac{\norm{x-y}_{g(x)}}{\sqrt{\bar\nu}}.
 \label{eq:crossRatioLocalNorm}
\end{equation}
The first inequality holds whichever of the two endpoint distances
defines $t_x$.

The cross-ratio isoperimetric inequality for log-concave measures
\cite[Theorem~2.5]{LovaszVempala07}, in the form used in
\cite[proof of Lemma~B.6]{KV24}, says that if
$K=S_1\sqcup S_2\sqcup S_3$, then
\begin{equation}
 \pi(S_3)\ge
 c\,d_K(S_1,S_2)\pi(S_1)\pi(S_2).
 \label{eq:crossRatioIsoperimetry}
\end{equation}
Equations \eqref{eq:crossRatioLocalNorm} and
\eqref{eq:crossRatioIsoperimetry} give the required global
isoperimetry in the local norm of $g$.

For the conductance partition, in the standard local-overlap form of
\cite{LovaszSimonovits93,KV24}, fix $S$ with
$0<\pi(S)\le1/2$ and choose a fixed $\alpha<0.05$. Put
\begin{align*}
 Q(S,S^c)&=\int_S\mathsf K(x,S^c)\pi(\dd x),\\
 S_1&=\{x\in S:\mathsf K(x,S^c)<\alpha\},\\
 S_2&=\{y\in S^c:\mathsf K(y,S)<\alpha\}.
\end{align*}
For $x\in S_1$ and $y\in S_2$,
$d_{\rm TV}(\mathsf K_x,\mathsf K_y)>1-2\alpha>0.9$.
Hence \eqref{eq:localOverlapAbstract} implies
$\norm{x-y}_{g(x)}>\Delta$, and
\eqref{eq:crossRatioLocalNorm} separates $S_1,S_2$ by at least
$\Delta/\sqrt{\bar\nu}$ in cross ratio.  If either $S_1$ or $S_2$
has lost a fixed fraction of its side, the definitions and
reversibility already give
$Q(S,S^c)\ge c\pi(S)$.  Otherwise both retain a fixed fraction, and
\eqref{eq:crossRatioIsoperimetry} gives
\[
 \pi\bigl(K\setminus(S_1\cup S_2)\bigr)
 \ge \frac{c\Delta}{\sqrt{\bar\nu}}\pi(S).
\]
The complement is contained in
$(S\setminus S_1)\cup(S^c\setminus S_2)$, whose stationary mass is
at most $2Q(S,S^c)/\alpha$.  Thus
$Q(S,S^c)\ge c\Delta\pi(S)/\sqrt{\bar\nu}$.  Taking the infimum over
$S$ proves \eqref{eq:symmetryConductance}.  This is the local-overlap
partition of \cite[Lemma~B.2]{KV24}, written here with its
cross-ratio separation explicit.
\end{proof}

\begin{proposition}[Chi-square corollary of the Dikin mixing framework]
\label{prop:framework}
Let $K\subset\R^d$ be bounded, convex and full-dimensional, and let
$g:\operatorname{int}K\to\mathbb S_{++}^d$ be a $C^2$ metric that is
strongly self-concordant, lower-trace self-concordant and
$\bar\nu$-symmetric.  Suppose that
$R_\eps(g)\ge\mathfrak a(\eps)>0$ for every $\eps\in(0,1)$, where
$\mathfrak a$ is independent of the dimension and of the instance.
There are universal constants $\eps_{\rm mix}\in(0,10^{-2})$ and
$c_0\in(0,1)$ and $c_1,c_2>0$ with the following property.  If
\begin{equation}
 a_0:=1\wedge\mathfrak a(\eps_{\rm mix}),
 \qquad
 \varrho_{\rm KV}:=c_0a_0,
 \label{eq:frameworkRadius}
\end{equation}
then, for every $\vartheta\in\R^d$, the $1/2$-lazy Metropolis Dikin
kernel $\mathsf K$ with proposal radius $\varrho_{\rm KV}$ is
reversible with stationary law $\pi_\vartheta$, has global
conductance
\begin{equation}
 \Phi(\mathsf K)
 :=\inf_{0<\pi_\vartheta(S)\le1/2}
 \frac{\int_S\mathsf K(x,S^c)\pi_\vartheta(\dd x)}
 {\pi_\vartheta(S)}
 \ge \frac{c_1a_0}{\sqrt{d\bar\nu}},
 \label{eq:frameworkConductance}
\end{equation}
and has $L^2(\pi_\vartheta)$ spectral gap at least
$c_2a_0^2/(d\bar\nu)$.  Consequently, for every
$\mu_0\ll\pi_\vartheta$ with finite chi-square divergence and every
integer $T\ge0$,
\begin{equation}
 \chi^2(\mu_0\mathsf K^T\Vert\pi_\vartheta)
 \le
 \exp\!\left(-\frac{c_2a_0^2T}{d\bar\nu}\right)
 \chi^2(\mu_0\Vert\pi_\vartheta).
 \label{eq:frameworkContraction}
\end{equation}
Consequently, for every $\delta\in(0,1)$, the left side is at most
$\delta$ whenever
\begin{equation}
 T\ge C a_0^{-2}d\bar\nu
 \log\!\left(
 \frac{1+\chi^2(\mu_0\Vert\pi_\vartheta)}{\delta}
 \right).
 \label{eq:framework}
\end{equation}
\end{proposition}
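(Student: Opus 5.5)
The plan is to chain Lemma~\ref{lem:linearTargetOverlap} into Lemma~\ref{lem:symmetryConductance}, then pass from conductance to a spectral gap via Cheeger's inequality for lazy reversible chains, and finally iterate the gap in $L^2(\pi_\vartheta)$.

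\emph{Constants and radius.} Set $\eps_{\rm mix}:=\eps_{\rm ov}$ and take $a:=a_0=1\wedge\mathfrak a(\eps_{\rm mix})$ in Lemma~\ref{lem:linearTargetOverlap}; this is admissible because $R_{\eps_{\rm ov}}(g)\ge\mathfrak a(\eps_{\rm ov})\ge a_0$. With $c_0:=c_{\rm ov}$, the radius $\varrho_{\rm KV}=c_0a_0$ satisfies the hypothesis $\varrho\le c_{\rm ov}(1\wedge a)$.

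\emph{Reversibility.} Reversibility of the Metropolis kernel with respect to $\pi_\vartheta$ is the standard detailed-balance computation. For $x,z\in\operatorname{int}K$, the accepted density \eqref{eq:acceptedDensity} gives
\[
\pi_\vartheta(x)k_x(z)=\min\{\pi_\vartheta(x)q_x(z),\pi_\vartheta(z)q_z(x)\},
\]
which is symmetric in $x$ and $z$. The rejection atom and the lazy half also preserve reversibility.

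\emph{Local overlap.} Lemma~\ref{lem:linearTargetOverlap} gives, for the non-lazy law $\mathsf P$, that $\norm{x-y}_{g(x)}\le c_{\rm near}\varrho_{\rm KV}/\sqrt d$ implies $d_{\rm TV}(\mathsf P_x,\mathsf P_y)\le0.8$. For the lazy kernel $\mathsf K_x=\frac12\delta_x+\frac12\mathsf P_x$ we have $d_{\rm TV}(\mathsf K_x,\mathsf K_y)\le\frac12+\frac12\cdot0.8=0.9$, since the atoms are disjoint for $x\ne y$. So \eqref{eq:localOverlapAbstract} holds with $\Delta=c_{\rm near}c_0a_0/\sqrt d<1$. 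The target $\pi_\vartheta$ is log-concave, so Lemma~\ref{lem:symmetryConductance} yields
\[
\Phi(\mathsf K)\ge \frac{c\,c_{\rm near}c_0a_0}{\sqrt{d\bar\nu}},
\]
which is \eqref{eq:frameworkConductance}.

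\emph{Spectral gap and contraction.} The lazy chain has nonnegative spectrum. Cheeger's inequality for reversible kernels (Lawler--Sokal / Jerrum--Sinclair) gives a gap of at least $\Phi^2/2$, hence at least $c_2a_0^2/(d\bar\nu)$. Writing $f=\dd\mu_0/\dd\pi_\vartheta-1$, reversibility gives $\dd(\mu_0\mathsf K^T)/\dd\pi_\vartheta-1=\mathsf K^Tf$. Since $f$ has mean zero, positivity of the spectrum lets us bound its $L^2$ norm by $(1-\gamma)^T\norm f$, with $\gamma$ the gap. Squaring gives $\chi^2(\mu_0\mathsf K^T)\le(1-\gamma)^{2T}\chi^2(\mu_0)\le e^{-\gamma T}\chi^2(\mu_0)$, which is \eqref{eq:frameworkContraction}. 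Finally, \eqref{eq:framework} follows because $T\ge\gamma^{-1}\log(\chi^2(\mu_0)/\delta)$ suffices, and $\chi^2(\mu_0)\le1+\chi^2(\mu_0)$.

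The only point needing care is the conductance normalization. Lemma~\ref{lem:symmetryConductance} is stated for the kernel to which \eqref{eq:localOverlapAbstract} applies, so it must be applied to the lazy kernel, which is why the overlap is first degraded from $0.8$ to $0.9$. The resulting universal constants are absorbed into $c_1$ and $c_2$.
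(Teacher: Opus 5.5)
Your proposal is correct and follows the paper's proof essentially step for step. You take $\eps_{\rm mix}=\eps_{\rm ov}$ and $c_0\le c_{\rm ov}$, degrade the $0.8$ overlap of Lemma~\ref{lem:linearTargetOverlap} to $0.9$ for the lazy kernel, and apply Lemma~\ref{lem:symmetryConductance} with $\Delta=c_{\rm near}\varrho_{\rm KV}/\sqrt d$; you then use Cheeger's inequality with nonnegative spectrum and iterate the gap on the mean-zero density $\dd\mu_0/\dd\pi_\vartheta-1$ in $L^2(\pi_\vartheta)$.
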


\begin{proof}
We derive the chi-square statement from the total-variation framework
of \cite{KV24}. Take $\eps_{\rm mix}=\eps_{\rm ov}$ from
Lemma~\ref{lem:linearTargetOverlap}.  Since
\[
 \varrho_{\rm KV}=c_0a_0,\qquad
 a_0\le1\wedge R_{\eps_{\rm mix}}(g),
\]
choosing $c_0\le c_{\rm ov}$ lets that lemma apply directly.  Thus
the non-lazy kernels satisfy
\eqref{eq:linearTargetOverlap}, uniformly in $\vartheta$. For the lazy
kernels
$\mathsf K_x=(\delta_x+\mathsf P_x)/2$, the triangle inequality gives
\[
 d_{\rm TV}(\mathsf K_x,\mathsf K_y)
 \le\frac12+\frac12(0.8)=0.9.
\]
Lemma~\ref{lem:symmetryConductance}, with
$\Delta=c_{\rm near}\varrho_{\rm KV}/\sqrt d$, now gives
\[
 \Phi(\mathsf K)
 \ge\frac{c\varrho_{\rm KV}}{\sqrt{d\bar\nu}}
 =\frac{ca_0}{\sqrt{d\bar\nu}}.
\]
This is the global stationary conductance in
\eqref{eq:frameworkConductance}, not a warm or restricted
conductance, and proves that display after adjusting $c_1$.

\emph{From conductance to chi-square contraction.}
Metropolis detailed balance makes $\mathsf K$ self-adjoint on
$L^2(\pi_\vartheta)$.  Since
$\mathsf K=(\Id+\mathsf P)/2$, its spectrum lies in $[0,1]$.
Cheeger's inequality for reversible Markov kernels
\cite[Theorem~2.1]{LawlerSokal88} therefore yields
\[
 \operatorname{gap}(\mathsf K)
 \ge\frac{\Phi(\mathsf K)^2}{2}
 \ge\frac{c a_0^2}{d\bar\nu}.
\]
If $f_0=\dd\mu_0/\dd\pi_\vartheta-1$, detailed balance gives
\[
 \frac{\dd(\mu_0\mathsf K^T)}{\dd\pi_\vartheta}-1
 =\mathsf K^Tf_0.
\]
The function $f_0$ has $\pi_\vartheta$-mean zero, and hence
\begin{align*}
 \chi^2(\mu_0\mathsf K^T\Vert\pi_\vartheta)
 &=\norm{\mathsf K^Tf_0}_{L^2(\pi_\vartheta)}^2\\
 &\le
 \left(1-\frac{ca_0^2}{d\bar\nu}\right)^{2T}
 \norm{f_0}_{L^2(\pi_\vartheta)}^2\\
 &\le
 \exp\!\left(-\frac{c_2a_0^2T}{d\bar\nu}\right)
 \chi^2(\mu_0\Vert\pi_\vartheta).
\end{align*}
No warmness bound is used: finite chi-square divergence is needed
only for $f_0\in L^2(\pi_\vartheta)$.  Solving the last display for
$T$ gives \eqref{eq:framework}; replacing the numerator
$\chi^2(\mu_0\Vert\pi_\vartheta)$ by
$1+\chi^2(\mu_0\Vert\pi_\vartheta)$ also covers the stationary
initialization without a separate case.  The same chi-square
formulation is recorded in \cite[Lemma~2.2]{Kook26}.
\end{proof}

The mixing argument is now reduced to deterministic Lee--Sidford
regularity and the quantitative ASC estimate of
Theorem~\ref{thm:asc}.

\subsection{Deterministic Lee--Sidford geometry and scaling}

\begin{proposition}[Established Lee--Sidford regularity]
\label{prop:knownregularity}
Let $p\ge\max\{4,2(1+\log m)\}$ and set
\begin{equation}
 \lambda_p:=(1+p^2)(1+p),\qquad H_p:=\lambda_pg_0.
 \label{eq:LLVnormalization}
\end{equation}
Then $H_p$ is strongly self-concordant and lower-trace
self-concordant, and it is $\bar\nu_p$-symmetric with
\begin{equation}
 \bar\nu_p\le e\lambda_pd.
 \label{eq:LSsymmetry}
\end{equation}
\end{proposition}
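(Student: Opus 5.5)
This proposition is essentially a restatement of the deterministic Lee--Sidford regularity established in \cite{LLV20} (SSC, LTSC via log-determinant convexity, and symmetry of the rescaled $\ell_p$ Lewis metric). The plan is to import those results, translating their normalization into the present notation, and to re-derive the few inequalities whose constants matter.

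\emph{Strong self-concordance.} Using \eqref{eq:LewisDerivative}, differentiate $g_0(x)=A_x^\transpose W_x^{2\beta}A_x$ in a direction $h$. Both factors $A_x$ contribute $-A_x^\transpose\Diag(s_{x,h})W_x^{2\beta}A_x$ terms. The weight factor contributes $2\beta\,A_x^\transpose W_x^{2\beta}\Diag(W_x^{-1/2}N_xv_{x,h})A_x$, up to sign. After conjugating by $g_0^{-1/2}$, each term has the form $U^\transpose\Diag(z)U$ with $U=W_x^\beta A_xg_0^{-1/2}$, so $UU^\transpose=P_x$. Its Frobenius norm squared is $z^\transpose P_x^{\had2}z\le z^\transpose W_xz$ by \eqref{eq:basicLewis}.
\begin{itemize}
\item For $z=s_{x,h}$ this gives $\norm{v_{x,h}}^2=\norm{h}_{g_0}^2$.
\item For $z=W_x^{-1/2}N_xv_{x,h}$ it gives at most $p^2\norm{h}_{g_0}^2$, since $N_x\preceq p\Id$.
\end{itemize}
Hence $\norm{g_0^{-1/2}Dg_0[h]g_0^{-1/2}}_{\mathrm F}\le C(1+p)\norm{h}_{g_0}$. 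For $H_p=\lambda_pg_0$, the left side is unchanged while $\norm{h}_{H_p}=\sqrt{\lambda_p}\norm{h}_{g_0}$. Since $\lambda_p\ge(1+p)^2$, the SSC constant $2$ follows once the constant $C$ is tracked; I would check against \cite{LLV20} that $C\le2$, absorbing any excess into the factor $1+p^2$.

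\emph{LTSC.} Differentiating once more with the same calculus, $\tr(g_0^{-1}D^2g_0[h,h])$ is a sum of terms $z^\transpose\diag(\cdot)$ with quadratic weight variations. These terms are bounded in absolute value by $C(1+p)^2\cdot(\text{a }p\text{-factor from }D N_x)\,\norm{h}_{g_0}^2$. The derivative of $N_x$ is controlled by $\norm{(\Id-c_p\bar\Lambda)^{-1}}\le p/2$, which supplies the $1+p^2$ factor. Scaling by $\lambda_p$ then yields $\tr(H_p^{-1}D^2H_p[h,h])\ge-\norm{h}_{H_p}^2$. Alternatively, I would cite the log-determinant convexity lemma of \cite{LLV20}, which shows $\log\det g_0$ is essentially convex, and add the SSC correction from \eqref{eq:logdetSecond}.

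\emph{Symmetry.} The inner inclusion uses $w_{x,i}\ge$ something, or more directly the bound $\norm{A_xh}_\infty\le\norm{h}_{H_p}$. This bound holds because $(A_xh)_i^2\le w_{x,i}^{-2\beta}\cdot w_{x,i}\norm{h}_{g_0}^2$ (the leverage bound), and $w_{x,i}^{1-2\beta}=w_{x,i}^{2/p}\le1$. A dimension-free constant suffices here only if $\lambda_p\ge1$, which holds. So $D_{H_p}(x,1)\subseteq K\cap(2x-K)$.

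For the outer inclusion, if $x\pm h\in K$ then $\abs{(A_xh)_i}\le1$ for all $i$. Hence
\[
\norm{h}_{g_0}^2=\sum_i w_i^{2\beta}(A_xh)_i^2\le\sum_i w_i^{1-2/p}\le d^{1-2/p}m^{2/p},
\]
by H\"older with $\sum w_i=d$. With $p\ge2(1+\log m)$ we get $m^{2/p}\le e$, so $\norm{h}_{H_p}^2\le e\lambda_pd$, which is \eqref{eq:LSsymmetry}.

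I expect the main obstacle to be LTSC: the second-derivative calculus of the Lewis weights with precise constants. The plan there is to rely on \cite{LLV20} for the structural inequality and verify only that the normalization $(1+p^2)(1+p)$ dominates their constant.
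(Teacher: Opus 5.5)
Your symmetry argument is the paper's own. The leverage identity $a_{x,i}^\transpose g_0(x)^{-1}a_{x,i}=w_{x,i}^{2/p}$ with Cauchy--Schwarz gives the inner inclusion, and $\abs{(A_xh)_i}\le1$ with H\"older and $m^{2/p}\le e$ gives $\bar\nu_p\le e\lambda_pd$. Your fallback for LTSC is also the paper's route, except that no SSC correction is needed. By \eqref{eq:logdetSecond}, convexity of $\log\det H_p$ \cite[Lemma~4.3]{LLV20} gives $\tr(H_p^{-1}D^2H_p[h,h])\ge\norm{H_p^{-1/2}DH_p[h]H_p^{-1/2}}_{\mathrm F}^2\ge0$ outright, so there is no constant to compare.

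You genuinely differ on SSC. The paper cites \cite[Lemma~4.2]{LLV20}, whereas you compute $g_0^{-1/2}Dg_0[h]g_0^{-1/2}=-U^\transpose\Diag(z)U$ with $z=2s_{x,h}+2\beta W_x^{-1/2}N_xv_{x,h}$ and use $P_x^{\had2}\preceq W_x$. This is correct and self-contained given \eqref{eq:LewisDerivative}, and it closes without consulting \cite{LLV20}: $\norm{W_x^{1/2}z}\le(2+p)\norm{v_{x,h}}\le(2+p)\norm{h}_{g_0(x)}$, and $2+p\le2\sqrt{\lambda_p}$. One slip: $\norm{v_{x,h}}^2=\sum_iw_{x,i}s_i^2$ is not equal to $\norm{h}_{g_0(x)}^2=\sum_iw_{x,i}^{1-2/p}s_i^2$. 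It is only bounded by it, because $w_{x,i}\le1$, which is the direction you need.

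Your primary, direct LTSC sketch does not work as written. Tracing against $g_0^{-1}$ turns each $A_x^\transpose\Diag(y)A_x$ into $\sum_iw_{x,i}^{2/p}y_i$. So the contribution of the derivative of $W_x^{-1/2}N_xv_{x,h}$ appears as $\ip{w_x^{1/2}}{\frac{\dd}{\dd t}(N_{x_t}v_t)}$. Operator-norm control of $DN_x$, through $\norm{(\Id-c_p\bar\Lambda_x)^{-1}}_{\op}\le p/2$, bounds this only by $\sqrt d$ times a $p$-dependent factor, because $\norm{w_x^{1/2}}=\sqrt d$. That loses the dimension-free LTSC constant.

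The missing idea is the kernel identity $N_xw_x^{1/2}=0$, which follows from $P_x^{\had2}\mathbf 1=w_x$ (cf.\ Lemma~\ref{lem:kernel}). Differentiate $\ip{w_t^{1/2}}{N_tv_t}\equiv0$ and use $\frac{\dd}{\dd t}w_t^{1/2}=-\frac12N_tv_t$. This gives $\ip{w_t^{1/2}}{\frac{\dd}{\dd t}(N_tv_t)}=\frac12\norm{N_tv_t}^2\le\frac{p^2}{2}\norm{h}_{g_0}^2$. Every other term is $O(p^2)\norm{h}_{g_0}^2$ by bounds you already use, and $\lambda_p\ge p^3$ absorbs this for $p\ge4$. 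With that identity, the direct computation becomes a fully self-contained alternative to the citation. Without it, LTSC rests on the convexity lemma, exactly as in the paper.
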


\begin{proof}
Under the dictionary $q=p$ and
$A_x^\transpose W_x^{1-2/q}A_x=g_0(x)$,
\eqref{eq:LLVnormalization} is exactly the LS matrix of
\cite[Definition~6]{LLV20}. Strong self-concordance and convexity of
$\log\det H_p$ are \cite[Lemmas~4.2 and~4.3]{LLV20}. The latter gives
\[
 \tr(H_p^{-1}D^2H_p[h,h])
 \ge
 \tr(H_p^{-1}DH_p[h]H_p^{-1}DH_p[h])\ge0,
\]
hence lower-trace self-concordance. The proof of
\cite[Lemma~4.3]{LLV20} also gives
\[
 \norm{y-x}_{H_p(x)}^2
 \le\lambda_p\sum_iw_{x,i}^{1-2/p}
 \le\lambda_pd^{1-2/p}m^{2/p}
 \le e\lambda_pd
\]
for $y\in K\cap(2x-K)$; here
$p\ge2(1+\log m)$ implies $m^{2/p}\le e$. The reverse inclusion
follows directly from the leverage identity
\[
 a_{x,i}^\transpose g_0(x)^{-1}a_{x,i}=w_{x,i}^{2/p}.
\]
Here $a_{x,i}^\transpose$ denotes the $i$th row of $A_x$.
Since $w_{x,i}=(P_x)_{ii}\le1$, if
$\norm{y-x}_{H_p(x)}\le1$, Cauchy--Schwarz gives
\[
 \abs{a_{x,i}^\transpose(y-x)}
 \le\lambda_p^{-1/2}w_{x,i}^{1/p}\le1
\]
for every $i$, and hence $y\in K\cap(2x-K)$. This proves
\eqref{eq:LSsymmetry}. See also
\cite[Section~2.2]{Kook26} for this normalization.
\end{proof}

\begin{proposition}[Metric scaling]\label{prop:scaling}
For every $L>0$,
\begin{equation}
 R_\eps(Lg)=L^{1/2}R_\eps(g).
 \label{eq:radiusScaling}
\end{equation}
If $L\ge1$ and $g$ is $\bar\nu$-symmetric, then $Lg$ is $L\bar\nu$-symmetric. Scaling by $L\ge1$ also preserves strong self-concordance and lower-trace self-concordance.
\end{proposition}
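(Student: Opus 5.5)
The plan is to verify each of the three assertions of Proposition~\ref{prop:scaling} by direct substitution into the definitions. Nothing beyond the definitions is needed, since scaling the metric by a constant only reparametrizes radii and norms.

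\emph{Radius scaling.} Put $\tilde g=Lg$, so that $\norm{u}_{\tilde g(x)}^2=L\norm{u}_{g(x)}^2$ and $\tilde g(x)^{-1}=L^{-1}g(x)^{-1}$. For a radius $s$, the law $N(x,s^2\tilde g(x)^{-1}/d)$ is the same as $N(x,(s/\sqrt L)^2g(x)^{-1}/d)$. Set $s'=s/\sqrt L$. Then the discrepancy in \eqref{eq:quantASCdef} for $\tilde g$ equals $L$ times the discrepancy for $g$, and the threshold becomes
\[
 \frac{2\eps s^2}{d}=L\cdot\frac{2\eps s'^2}{d}.
\]
The interior condition $z\in\operatorname{int}K$ is unchanged. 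So the event in \eqref{eq:quantASCdef} for $(\tilde g,s)$ coincides with the event for $(g,s')$, pathwise for the same Gaussian sample. Hence $r$ is admissible for $Lg$ if and only if $r/\sqrt L$ is admissible for $g$, since $0<s\le r$ corresponds exactly to $0<s'\le r/\sqrt L$. Taking suprema gives $R_\eps(Lg)=L^{1/2}R_\eps(g)$.

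\emph{Symmetry.} The ellipsoids rescale as $D_{Lg}(x,r)=D_g(x,r/\sqrt L)$. The inner inclusion uses $L\ge1$:
\[
 D_{Lg}(x,1)=D_g(x,L^{-1/2})\subseteq D_g(x,1)\subseteq K\cap(2x-K).
\]
The outer inclusion is exact:
\[
 K\cap(2x-K)\subseteq D_g(x,\sqrt{\bar\nu})=D_{Lg}(x,\sqrt{L\bar\nu}).
\]
Together these show that $Lg$ is $L\bar\nu$-symmetric.

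\emph{SSC.} The normalized derivative $g^{-1/2}Dg[h]g^{-1/2}$ is invariant under $g\mapsto Lg$, because the factors of $L$ and $L^{-1}$ cancel. The right side of the SSC inequality, however, becomes $2\norm{h}_{Lg(x)}=2\sqrt L\norm{h}_{g(x)}\ge2\norm{h}_{g(x)}$ when $L\ge1$. So the inequality is preserved.

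\emph{LTSC.} The quantity $\tr(g^{-1}D^2g[h,h])$ is likewise invariant. The required lower bound changes to $-\norm{h}_{Lg(x)}^2=-L\norm{h}_{g(x)}^2\le-\norm{h}_{g(x)}^2$, which is weaker, so LTSC is preserved as well.

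There is no real obstacle in this argument. The only care needed is in the radius-scaling step: one should check that the admissibility condition quantifies over all $0<s\le r$, and that the bijection $s\leftrightarrow s/\sqrt L$ maps this range exactly onto the corresponding range for $g$.
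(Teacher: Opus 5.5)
Your proposal is correct and is essentially the paper's proof. In both, the radius-$r$ proposal for $Lg$ is the radius-$r/\sqrt L$ proposal for $g$, with discrepancy and threshold both multiplied by $L$; symmetry follows from $\norm{u}_{Lg(x)}^2=L\norm{u}_{g(x)}^2$; and the SSC and LTSC left sides are invariant while the right-hand bounds only weaken when $L\ge1$. You are somewhat more explicit than the paper about the quantifier over $0<s\le r$ and about checking the two symmetry inclusions separately.
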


\begin{proof}
A radius-$r$ proposal for $Lg$ is a radius-$r/L^{1/2}$
proposal for $g$. Its discrepancy is multiplied by $L$, while
\[
 L\frac{2\eps(r/L^{1/2})^2}{d}
 =\frac{2\eps r^2}{d},
\]
which proves \eqref{eq:radiusScaling}. The symmetry statement follows
from $\norm{u}_{Lg(x)}^2=L\norm{u}_{g(x)}^2$. In the defining
differential inequalities for strong self-concordance, the left side
is unchanged while the local norm on the right is multiplied by
$L^{1/2}$. For lower-trace self-concordance, the trace term is
unchanged while the negative lower bound is multiplied by $L$; hence
the condition is preserved when $L\ge1$.
\end{proof}

\subsection{Affine covariance}

\begin{lemma}[Affine covariance]\label{lem:affineCovariance}
Let $\Psi(\widetilde x)=x_c+T\widetilde x$, where
$T\in\R^{d\times d}$ is invertible, and pull the polytope and metric
back through $\Psi$. Then the Lewis weights and the matrices $P$ and
$N$ are unchanged at corresponding points, while
\begin{equation}
 \widetilde g_0(\widetilde x)
 =T^\transpose g_0(\Psi(\widetilde x))T.
 \label{eq:metricCovariance}
\end{equation}
Moreover,
\begin{align}
 D^k\widetilde N_{\widetilde x}
 [r_1,\ldots,r_k]
 &=D^kN_{\Psi(\widetilde x)}
 [T r_1,\ldots,T r_k],
 \label{eq:NjetCovariance}\\
 \norm{r}_{\widetilde g_0(\widetilde x)}
 &=\norm{T r}_{g_0(\Psi(\widetilde x))}.
 \label{eq:localNormCovariance}
\end{align}
The Gaussian proposal and the discrepancy in
\eqref{eq:quantASCdef} are carried into their counterparts by
$\Psi$.
\end{lemma}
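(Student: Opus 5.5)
The plan is to pull everything back through the affine map $\Psi$ and then read off each claimed identity from the chain rule.

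\emph{The pulled-back polytope.} Write $K=\{x:Ax\ge b\}$. Then
\[
 \widetilde K=\Psi^{-1}(K)=\{\widetilde x:\widetilde A\widetilde x\ge\widetilde b\},
 \qquad
 \widetilde A=AT,\quad \widetilde b=b-Ax_c .
\]
Since $T$ is invertible, $\widetilde A$ has full column rank and no zero row. Put $x=\Psi(\widetilde x)$. Then $\widetilde A\widetilde x-\widetilde b=Ax-b$, so $\widetilde S_{\widetilde x}=S_x$ and $\widetilde A_{\widetilde x}=A_xT$.

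\emph{Lewis weights, $P$ and $N$.} Take any positive weight vector $w$ and set $B=W^\beta A_x$. Right-multiplying by $T$ does not change the column space, so
\[
 BT\bigl(T^\transpose B^\transpose BT\bigr)^{-1}T^\transpose B^\transpose
 =B(B^\transpose B)^{-1}B^\transpose .
\]
Hence the fixed-point equation \eqref{eq:lewis} for the tilde data is literally the same equation as the original one. Uniqueness of positive Lewis weights then gives $\widetilde w_{\widetilde x}=w_x$. Next,
\[
 \widetilde g_0(\widetilde x)=T^\transpose A_x^\transpose W_x^{2\beta}A_xT,
\]
which is \eqref{eq:metricCovariance}. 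By the same cancellation, $\widetilde P_{\widetilde x}=P_x$. It follows that $\widetilde\Lambda$, $\bar{\widetilde\Lambda}$ and $\widetilde N_{\widetilde x}$ coincide with $\Lambda_x$, $\bar\Lambda_x$ and $N_x$.

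\emph{Derivatives and local norms.} The identity $\widetilde N_{\widetilde x}=N_{\Psi(\widetilde x)}$ holds for all $\widetilde x$, and $\Psi$ is affine with linear part $T$. Differentiating $k$ times by the chain rule therefore gives \eqref{eq:NjetCovariance}; no higher derivatives of $\Psi$ appear because $\Psi$ is affine. Equation \eqref{eq:localNormCovariance} is immediate from \eqref{eq:metricCovariance}:
\[
 r^\transpose T^\transpose g_0 Tr=\norm{Tr}_{g_0}^2 .
\]

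\emph{The ASC event.} Let $\widetilde z\sim N\bigl(\widetilde x,s^2\widetilde g_0(\widetilde x)^{-1}/d\bigr)$ and $z=\Psi(\widetilde z)$. Then $z$ is Gaussian with mean $x$ and covariance
\[
 T\cdot s^2T^{-1}g_0(x)^{-1}T^{-\transpose}\cdot T^\transpose/d=s^2g_0(x)^{-1}/d,
\]
so the tilde proposal is carried to the original one. Also $z-x=T(\widetilde z-\widetilde x)$. Applying \eqref{eq:localNormCovariance} at both $\widetilde z$ and $\widetilde x$ shows that the two quadratic forms in \eqref{eq:quantASCdef} are equal term by term. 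Finally, $\Psi$ maps $\operatorname{int}\widetilde K$ onto $\operatorname{int}K$, so the interior condition is preserved as well.

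The only step needing care is the uniqueness argument for the Lewis weights. It works because the invariance holds at the level of the defining equation for every candidate $w$, not just at the solution. Everything else is routine.
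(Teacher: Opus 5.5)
Your proposal is correct and follows essentially the same route as the paper. Both arguments use the same four steps:
\begin{itemize}
\item the identity $\widetilde A_{\widetilde x}=A_{\Psi(\widetilde x)}T$;
\item the cancellation of $T$ and $T^\transpose$ inside the inverse in \eqref{eq:lewis} and \eqref{eq:g0};
\item the chain rule for the $N$-jets;
\item the congruence $T\widetilde g_0(\widetilde x)^{-1}T^\transpose=g_0(\Psi(\widetilde x))^{-1}$ for the Gaussian proposals.
\end{itemize}
Your additional details spell out steps the paper leaves implicit. These are the appeal to uniqueness of the positive Lewis solution, which is valid because the defining equation is invariant for every candidate weight vector. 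They also include the checks that $AT$ keeps full column rank and nonzero rows, and that $\Psi$ maps interiors to interiors.
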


\begin{proof}
The pulled-back inequality matrix is $\widetilde A=AT$, and the slack
vectors at $\widetilde x$ and $\Psi(\widetilde x)$ coincide. Hence
\[
 \widetilde A_{\widetilde x}=A_{\Psi(\widetilde x)}T.
\]
Substitution in \eqref{eq:lewis} cancels $T$ and $T^\transpose$
inside the inverse, so the Lewis fixed point, and therefore $P$ and
$N$, is unchanged. The same substitution in \eqref{eq:g0} proves
\eqref{eq:metricCovariance}. The chain rule gives
\eqref{eq:NjetCovariance}, and \eqref{eq:localNormCovariance} follows
by congruence. Finally,
\[
 T\widetilde g_0(\widetilde x)^{-1}T^\transpose
 =g_0(\Psi(\widetilde x))^{-1},
\]
so corresponding Gaussian proposals have the same law after applying
$\Psi$. The two squared local norms in
\eqref{eq:quantASCdef} are preserved by
\eqref{eq:metricCovariance}.
\end{proof}

The covariance lemma allows every ASC estimate to be proved at an
arbitrary base point after imposing $x=0$ and $g_0(x)=\Id_d$.

\section{Selective expansion of the metric discrepancy}
\label{sec:selective}

The normalization, path variables, and first bottleneck decomposition
in this section follow \cite[Sections~1.2 and~3.1]{Kook26}. We
retain that notation to make the comparison transparent. Kook
iterates the recursion to fixed order. Lemma~\ref{lem:selective-expansion}
records its exact arbitrary-order form, which will later be used with
a logarithmic truncation order.

By Lemma~\ref{lem:affineCovariance}, the metric transforms
covariantly, while the ASC discrepancy and radius are affine
invariant. Fix a base point $x$ and apply an affine change of
coordinates so that
\begin{equation}
 x=0,
 \qquad g_0(x)=\Id_d.
 \label{eq:normalization}
\end{equation}
Let $h\sim N(0,\Id_d)$, set $\eta=r/\sqrt d$, and define
\begin{equation}
 F(t)=h^\transpose g_0(x+th)h,
 \qquad 0\le t\le\eta.
 \label{eq:Fdef}
\end{equation}
For $x_t=x+th$, abbreviate $A_t=A_{x_t}$, $W_t=W_{x_t}$ and $N_t=N_{x_t}$, and set
\begin{equation}
 s_t=A_th,
 \quad u_t=W_t^\beta s_t,
 \quad v_t=W_t^{1/2}s_t,
 \quad \zeta_t=W_t^{-1/p}s_t,
 \quad q_t=W_t^{-1/2}u_t^{\had2}.
 \label{eq:pathvars}
\end{equation}
Then $F(t)=\norm{u_t}^2$. Since $s_t'=-s_t^{\had2}$ and \eqref{eq:LewisDerivative} holds, if $\ell_t=\log w_t$, then
\begin{equation}
 F'(t)=-2E_1(t)-2\beta H_1(t),
 \qquad
 E_1(t)=\ip{s_t}{W_t^\alpha s_t^{\had2}},
 \qquad
 H_1(t)=q_t^\transpose N_tv_t.
 \label{eq:Fprime}
\end{equation}
For $j\ge1$, define
\begin{equation}
 H_j(t)=q_t^\transpose N_t^{(j-1)}v_t,
 \label{eq:Hj}
\end{equation}
where $N_t^{(k)}=D^kN_{x_t}[h,\ldots,h]$. Differentiation gives
\begin{equation}
 H_j'(t)=G_{j+1}(t)+H_{j+1}(t),
 \quad
 G_{j+1}(t)=(q_t')^\transpose N_t^{(j-1)}v_t
 +q_t^\transpose N_t^{(j-1)}v_t'.
 \label{eq:selective}
\end{equation}

\begin{lemma}[Selective expansion]\label{lem:selective-expansion}
For every integer $J\ge1$,
\begin{align}
 \int_0^\eta H_1(t)\dd t
 ={}&\sum_{j=1}^J\frac{\eta^j}{j!}H_j(0)
 +\sum_{j=2}^J\int_0^\eta
 \frac{(\eta-t)^{j-1}}{(j-1)!}G_j(t)\dd t \notag\\
 &+\int_0^\eta\frac{(\eta-t)^J}{J!}
 \bigl(G_{J+1}(t)+H_{J+1}(t)\bigr)\dd t.
 \label{eq:selective-exact}
\end{align}
\end{lemma}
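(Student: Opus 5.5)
This identity is a Taylor expansion with integral remainder, organized to follow the recursion \eqref{eq:selective}; we prove it by induction on $J$. The only analytic input is smoothness: $t\mapsto H_j(t)$ and $G_j(t)$ are $C^1$ on $[0,\eta]$ whenever the segment $\{x+th:0\le t\le\eta\}$ lies in $\operatorname{int}K$. This follows from Lemma~\ref{lem:LewisAnalytic}, which makes $w_x$, $W_x$, $P_x$, $\Lambda_x$ and hence $N_x$ real analytic, together with the smoothness of the slack variables $s_t$. The identity is pathwise in $h$, so we work on that event. Throughout, we use the elementary integration-by-parts formula, valid for $f\in C^1[0,\eta]$ and $k\ge1$:
\[
 \int_0^\eta\frac{(\eta-t)^{k-1}}{(k-1)!}f(t)\dd t
 =\frac{\eta^k}{k!}f(0)
 +\int_0^\eta\frac{(\eta-t)^k}{k!}f'(t)\dd t .
\]
To obtain it, differentiate $-(\eta-t)^k/k!$ and note that the boundary term at $t=\eta$ vanishes.

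\emph{Base case $J=1$.} Apply the formula with $k=1$ and $f=H_1$. It gives $\int_0^\eta H_1=\eta H_1(0)+\int_0^\eta(\eta-t)H_1'(t)\dd t$. Substituting $H_1'=G_2+H_2$ from \eqref{eq:selective} yields exactly \eqref{eq:selective-exact} for $J=1$. In that case the middle sum $\sum_{j=2}^1$ is empty.

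\emph{Inductive step.} Assume \eqref{eq:selective-exact} holds for $J$. Split the terminal term into its $G_{J+1}$ part and its $H_{J+1}$ part. The $G_{J+1}$ part, $\int_0^\eta\frac{(\eta-t)^J}{J!}G_{J+1}$, is exactly the $j=J+1$ summand of the middle sum. For the $H_{J+1}$ part, apply the formula with $k=J+1$ and $f=H_{J+1}$. This produces $\frac{\eta^{J+1}}{(J+1)!}H_{J+1}(0)$, which is the new base-point term, plus $\int_0^\eta\frac{(\eta-t)^{J+1}}{(J+1)!}H_{J+1}'(t)\dd t$. Using \eqref{eq:selective} once more, $H_{J+1}'=G_{J+2}+H_{J+2}$, so this integral is the terminal remainder at order $J+1$. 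This closes the induction.

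There is no genuine obstacle here. The one point requiring care is regularity: we need $H_{J+1}$ to be $C^1$ along the whole segment, which holds whenever the segment stays in the interior by the analyticity in Lemma~\ref{lem:LewisAnalytic}. We also need the index bookkeeping to be right, with the $G_j$ kernel $(\eta-t)^{j-1}/(j-1)!$ matching the order at which $G_j$ first appears in $H_{j-1}'$.
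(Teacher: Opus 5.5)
Your proof is correct and is essentially the paper's argument. The paper iterates $H_j(t)=H_j(0)+\int_0^t\bigl(G_{j+1}(s)+H_{j+1}(s)\bigr)\dd s$ and collapses the iterated integrals with Fubini, while you do the same Taylor-with-integral-remainder bookkeeping by induction, integrating by parts against the kernel $(\eta-t)^{k-1}/(k-1)!$; your regularity requirement that the segment lie in $\operatorname{int}K$ (which holds on $\cE_0$ where the lemma is applied) is left implicit in the paper.
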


\begin{proof}
Iterate
\[
 H_j(t)=H_j(0)+\int_0^t\bigl(G_{j+1}(s)+H_{j+1}(s)\bigr)\dd s
\]
and apply Fubini's theorem. After $j$ integrations the kernel is $(\eta-t)^{j-1}/(j-1)!$.
\end{proof}

The expansion separates the proof into three tasks. Analytic frame
estimates control the derivatives of $N_x$; the weighted Wick theorem
controls the base-point polynomials $H_j(0)$; and pathwise localization
controls the $G_j(t)$ terms and the terminal remainder. The next four
sections treat these tasks in that order.

\section{Analytic horizontal-frame jets}\label{sec:analytic}

Following \cite[Section~1.2 and Appendix~B.3]{Kook26}, we express
Lewis-weight derivatives through moving orthonormal frames. A
horizontal gauge and a complex relative-row neighborhood yield
dimension-free Cauchy bounds for all derivatives of $N_x$, uniformly
in $m,d$ and the smallest Lewis weight.

\subsection{Horizontal-frame coordinates}

\begin{lemma}[Algebraic identities for a physical frame]
\label{lem:physicalFrameAlgebra}
Fix $y\in\operatorname{int}K$, put
$\widehat B_y=W_y^\beta A_y$, and let $U$ be any real orthonormal
frame for $\operatorname{col}(\widehat B_y)$. Then
\begin{align}
 UU^\transpose
 &=\widehat B_y
 (\widehat B_y^\transpose\widehat B_y)^{-1}
 \widehat B_y^\transpose=P_y,
 \label{eq:physicalProjection}\\
 \diag(UU^\transpose)&=w_y.
 \label{eq:physicalWeights}
\end{align}
In particular, every row of $U$ is nonzero.
\end{lemma}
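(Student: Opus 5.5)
The proof is elementary linear algebra; I would carry it out in three short steps.

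First, for \eqref{eq:physicalProjection}: the matrix $\widehat B_y=W_y^\beta A_y$ has full column rank, because $W_y^\beta$ is an invertible diagonal matrix (Lewis weights are strictly positive) and $A_y=S_y^{-1}A$ with $S_y$ invertible on $\operatorname{int}K$ and $A$ of full column rank. Hence $\widehat B_y^\transpose\widehat B_y=g_0(y)$ is invertible, and $\widehat B_y(\widehat B_y^\transpose\widehat B_y)^{-1}\widehat B_y^\transpose$ is the orthogonal projection onto $\operatorname{col}(\widehat B_y)$. If $U$ has orthonormal columns spanning the same subspace, then $UU^\transpose$ is also that orthogonal projection. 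Uniqueness of orthogonal projections gives the first equality. Comparing with \eqref{eq:P-Lambda} gives $P_y=W_y^\beta A_yg_0(y)^{-1}A_y^\transpose W_y^\beta$, which is the same expression.

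Second, for \eqref{eq:physicalWeights}: take diagonals in \eqref{eq:physicalProjection} and compare with the Lewis fixed-point equation \eqref{eq:lewis}. That equation states precisely $w_y=\diag(P_y)$, since $2\beta$ in the exponent matches $g_0$ in \eqref{eq:g0}.

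Third, for the final claim: the $i$th diagonal entry of $UU^\transpose$ is $\norm{U_{i,\cdot}}^2$, and it equals $w_{y,i}>0$ by strict positivity of Lewis weights. So every row of $U$ is nonzero.

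No step is a real obstacle. The only point needing care is stating explicitly that full column rank of $\widehat B_y$ follows from positivity of the weights and of the slacks.
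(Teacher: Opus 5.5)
Your proposal is correct and matches the paper's proof: both use uniqueness of the orthogonal projection onto the common column space, then the Lewis fixed-point equation \eqref{eq:lewis} for the diagonal, then strict positivity of the weights to show no row vanishes. Your explicit check that $\widehat B_y$ has full column rank (positive weights, positive slacks, full column rank of $A$) fills in a detail the paper leaves implicit.
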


\begin{proof}
Two full-column-rank matrices with the same column space have the same
orthogonal projection. This gives \eqref{eq:physicalProjection}; the
Lewis fixed-point equation \eqref{eq:lewis} then gives
\eqref{eq:physicalWeights}. Strict positivity of every Lewis weight
and \eqref{eq:physicalWeights} imply that no row of $U$ vanishes.
\end{proof}

Fix a real base point satisfying \eqref{eq:normalization} and a direction $e\in\R^d$ with $\norm{e}=1$. Along $y_\tau=x+\tau e$, let
\[
 \widehat B_\tau=W_{y_\tau}^\beta A_{y_\tau}.
\]
Lemma~\ref{lem:LewisAnalytic} makes this a real analytic, full-rank
matrix path. Orthonormalize it and apply the horizontal-gauge
construction of Lemma~\ref{lem:horizontalgauge}. We thereby obtain
the physical horizontal frame $U_\tau$, choosing its initial gauge so
that
\begin{equation}
 U_0=\widehat B_0=W_x^\beta A_x.
 \label{eq:physicalU0}
\end{equation}
This is possible because
$\widehat B_0^\transpose\widehat B_0=g_0(x)=\Id_d$. The frame satisfies
\begin{equation}
 U_\tau^\transpose U_\tau=\Id_d,
 \qquad U_\tau^\transpose U_\tau'=0.
 \label{eq:horizontal}
\end{equation}
The horizontal gauge and its evolution equation are proved in Appendix~\ref{app:frame}.

Write $u_i^\transpose$ for the $i$th row of $U$ and define, initially on the real path,
\begin{equation}
 w_i=u_i^\transpose u_i,
 \qquad \omega_i=w_i^{1/2},
 \qquad \phi_i=\frac{u_i\otimes u_i}{\omega_i}.
 \label{eq:rowmap}
\end{equation}
Let $\Phi$ be the matrix whose $i$th row is $\phi_i^\transpose$, after identifying $\R^d\otimes\R^d$ with $\R^{d^2}$. Then
\begin{equation}
 G:=\Phi\Phi^\transpose
 =W^{-1/2}P^{\had2}W^{-1/2},
 \qquad P=UU^\transpose,
 \label{eq:Gframe}
\end{equation}
and, with
\[
 a=\frac2p,\qquad c_p=1-\frac2p,
\]
we have
\begin{equation}
 N(U)=2(\Id-G)(a\Id+c_pG)^{-1}.
 \label{eq:Nframe}
\end{equation}
Indeed,
$a\Id+c_pG=\Id-c_p(\Id-G)=\Id-c_p\bar\Lambda$.
For the physical frame just constructed,
Lemma~\ref{lem:physicalFrameAlgebra} and the row definitions give
\begin{equation}
 G(U_\tau)
 =W_{y_\tau}^{-1/2}P_{y_\tau}^{\had2}W_{y_\tau}^{-1/2},
 \qquad N(U_\tau)=N_{y_\tau}.
 \label{eq:physicalGN}
\end{equation}

Let
\[
 s_i(\tau)=(A_{y_\tau}e)_i.
\]
Because each slack is affine,
\begin{equation}
 s_i(\tau)=\frac{s_i(0)}{1+\tau s_i(0)}.
 \label{eq:sexplicit}
\end{equation}
At the base point, $A_x=W_0^{-\beta}U_0$, and hence
\begin{equation}
 \abs{s_i(0)}
 =w_i^{-\beta}\abs{u_i^\transpose e}
 \le w_i^{1/2-\beta}=w_i^{1/p}\le1,
 \qquad
 \norm{W_0^{1/2}s(0)}\le1.
 \label{eq:sinitial}
\end{equation}
The second inequality follows from $W_0^{1/2}s(0)=W_0^{1/p}U_0e$ and $0<W_0\preceq\Id$.

Define
\begin{equation}
 v(U,\tau)=W(U)^{1/2}s(\tau),
 \qquad
 z(U,\tau)=-s(\tau)-\beta W(U)^{-1/2}N(U)v(U,\tau),
 \label{eq:zclosed}
\end{equation}
where $W(U)=\Diag(u_i^\transpose u_i)$. From \eqref{eq:LewisDerivative},
$\widehat B_\tau'=\Diag(z_\tau)\widehat B_\tau$. The horizontal condition therefore gives the closed equation
\begin{equation}
 U'=\cF(U,\tau)
 :=(\Id-UU^\transpose)\Diag(z(U,\tau))U,
 \qquad U(0)=U_0.
 \label{eq:frameODE}
\end{equation}
The converse identification of a solution of this closed equation
with the physical frame is proved after the analytic neighborhood has
been established.

\subsection{The weighted complex neighborhood}

We complexify all matrices and use algebraic transpose
$A^\transpose$, rather than the Hermitian adjoint
$A^*=\overline A^\transpose$, in the Stiefel equations and the
holomorphic maps $UU^\transpose$ and $\Phi\Phi^\transpose$. Norms are
the Hermitian Euclidean, operator and Frobenius norms. For these
norms,
\begin{equation}
 \norm{A^\transpose}_{\op}=\norm{A}_{\op},
 \qquad
 \norm{A^\transpose}_{\mathrm F}=\norm{A}_{\mathrm F};
 \label{eq:complexTransposeNorm}
\end{equation}
the first identity follows because the nonzero eigenvalues of
$\overline A A^\transpose$ and $A^\transpose\overline A$ agree, while
$A^\transpose\overline A=(A^*A)^\transpose$. Loewner-order statements
are used only at the real base point; away from it,
invertibility follows by operator-norm perturbation and a Neumann
series. Fix $W_0=\Diag(w_{0,i})$ and define
\begin{equation}
 \norm{V}_{\cX}
 :=\norm{V}_{\mathrm F}
 +\max_{1\le i\le m}\frac{\norm{e_i^\transpose V}_2}{\sqrt{w_{0,i}}}.
 \label{eq:Xnorm}
\end{equation}
For an $m\times q$ matrix $X$, set
\begin{equation}
 \norm{X}_{\mathrm{row},0}
 :=\max_i\frac{\norm{e_i^\transpose X}_2}{\sqrt{w_{0,i}}},
 \label{eq:rownorm}
\end{equation}
and, for $y\in\C^m$, define
\begin{equation}
 \norm{y}_{\mathrm{adm},0}=\norm{y}_2+\norm{W_0^{-1/2}y}_\infty.
 \label{eq:admnorm}
\end{equation}

\begin{lemma}[Analytic row map]\label{lem:rowmap}
There are universal constants $c_0,C>0$ such that, if
$\norm{U-U_0}_{\cX}\le\delta\le c_0$, then each $u_i^\transpose u_i$ lies in
$\{z:\abs{z-w_{0,i}}\le w_{0,i}/2\}$. The square-root branch agreeing with $\sqrt{w_{0,i}}$ is analytic, and
\begin{align}
 \norm{\omega(U)-\omega(U_0)}_2
 &\le C\norm{U-U_0}_{\mathrm F},
 \label{eq:omegaLip}\\
 \norm{\Phi(U)-\Phi(U_0)}_{\mathrm F}
 +\norm{\Phi(U)-\Phi(U_0)}_{\mathrm{row},0}
 &\le C\norm{U-U_0}_{\cX},
 \label{eq:PhiLip}\\
 \norm{G(U)-G(U_0)}_{\op}
 +\norm{G(U)-G(U_0)}_{\mathrm{row},0}
 &\le C\norm{U-U_0}_{\cX}.
 \label{eq:GLip}
\end{align}
The maps $U\mapsto\omega(U)$, $U\mapsto\Phi(U)$ and $U\mapsto G(U)$ are holomorphic on this neighborhood.
\end{lemma}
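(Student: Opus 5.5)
We prove the four assertions of Lemma~\ref{lem:rowmap} by perturbing row by row, using the relative-row part of the $\cX$-norm. Write $\Delta=U-U_0$ with rows $\delta_i^\transpose$. Then $\norm{\delta_i}_2\le\delta\sqrt{w_{0,i}}$ and $\norm{\Delta}_{\mathrm F}\le\delta$. Because $U_0$ is a physical frame, $\norm{u_{0,i}}_2^2=w_{0,i}$ by Lemma~\ref{lem:physicalFrameAlgebra}. With the algebraic transpose,
\[
 u_i^\transpose u_i-w_{0,i}=2u_{0,i}^\transpose\delta_i+\delta_i^\transpose\delta_i ,
\]
so $\abs{u_i^\transpose u_i-w_{0,i}}\le(2\delta+\delta^2)w_{0,i}\le w_{0,i}/2$ once $c_0$ is small. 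On the disc $\abs{z-w_{0,i}}\le w_{0,i}/2$, the principal branch of $\sqrt z$ is holomorphic, since the disc avoids $(-\infty,0]$. It agrees with $\sqrt{w_{0,i}}$ at the centre, and $\abs{\sqrt z}\asymp\sqrt{w_{0,i}}$ throughout. Hence $\omega$, $\phi_i=u_i\otimes u_i/\omega_i$, $\Phi$ and $G=\Phi\Phi^\transpose$ are compositions of polynomials with holomorphic functions having nonvanishing denominators, and are therefore holomorphic.

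For \eqref{eq:omegaLip}, use $\omega_i-\omega_{0,i}=(w_i-w_{0,i})/(\omega_i+\omega_{0,i})$. The denominator has modulus at least $c\sqrt{w_{0,i}}$; to see this, compare real parts, since $\operatorname{Re}\sqrt z>0$ on the disc. The numerator has modulus at most $3\sqrt{w_{0,i}}\norm{\delta_i}$. So $\abs{\omega_i-\omega_{0,i}}\le C\norm{\delta_i}$, and summing squares gives the Frobenius bound.

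For \eqref{eq:PhiLip}, write
\[
 \phi_i-\phi_{0,i}=\frac{\delta_i\otimes u_i+u_{0,i}\otimes\delta_i}{\omega_i}
 +(u_{0,i}\otimes u_{0,i})\Bigl(\frac1{\omega_i}-\frac1{\omega_{0,i}}\Bigr).
\]
The first term has norm at most $C\norm{\delta_i}\sqrt{w_{0,i}}/\sqrt{w_{0,i}}=C\norm{\delta_i}$. The second has norm at most $w_{0,i}\cdot C\norm{\delta_i}/w_{0,i}=C\norm{\delta_i}$. Thus $\norm{\phi_i-\phi_{0,i}}\le C\norm{\delta_i}$. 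Summing squares controls the Frobenius norm by $C\norm{\Delta}_{\mathrm F}$, and dividing by $\sqrt{w_{0,i}}$ controls the row norm by $C\norm{\Delta}_{\mathrm{row},0}$. We also record $\norm{\phi_i}\le C\sqrt{w_{0,i}}$, obtained as $\norm{u_i}^2/\abs{\omega_i}$.

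The step most likely to need care is \eqref{eq:GLip}, which requires an \emph{operator}-norm bound on $G$ even though $\Phi$ is only controlled in Frobenius norm. Write
\[
 G-G_0=(\Phi-\Phi_0)\Phi^\transpose+\Phi_0(\Phi-\Phi_0)^\transpose .
\]
For the operator norm it therefore suffices to bound $\norm{\Phi_0}_{\op}$ and $\norm{\Phi}_{\op}$ by universal constants. We have $\Phi_0\Phi_0^\transpose=G_0=W_0^{-1/2}P_0^{\had2}W_0^{-1/2}\preceq\Id$ by \eqref{eq:basicLewis}, so $\norm{\Phi_0}_{\op}\le1$. Then
\[
 \norm{\Phi}_{\op}\le1+\norm{\Phi-\Phi_0}_{\mathrm F}\le1+C\delta ,
\]
and with \eqref{eq:complexTransposeNorm} this gives $\norm{G-G_0}_{\op}\le C\norm{\Delta}_{\cX}$.

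For the row norm, row $i$ of $G-G_0$ equals
\[
 (\phi_i-\phi_{0,i})^\transpose\Phi^\transpose+\phi_{0,i}^\transpose(\Phi-\Phi_0)^\transpose .
\]
Its Euclidean norm is at most $\norm{\phi_i-\phi_{0,i}}\norm{\Phi}_{\op}+\norm{\phi_{0,i}}\norm{\Phi-\Phi_0}_{\op}$. This is at most $C\norm{\delta_i}+C\sqrt{w_{0,i}}\norm{\Delta}_{\cX}$, because $\norm{\phi_{0,i}}=\sqrt{w_{0,i}}$. Dividing by $\sqrt{w_{0,i}}$ gives $C\norm{\Delta}_{\cX}$. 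The relative-row component of the $\cX$-norm is thus exactly what makes the row estimate dimension-free, since the Frobenius term alone would not suffice.
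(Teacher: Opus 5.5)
Your proof is correct and follows the paper's argument. The same relative-row perturbation bound places $u_i^\transpose u_i$ in the disc, and the operator and row estimates for $G-G_0$ use the paper's decomposition $(\Phi-\Phi_0)\Phi^\transpose+\Phi_0(\Phi-\Phi_0)^\transpose$ together with $\norm{\Phi_0}_{\op}\le1$ from $G_0\preceq\Id$. The only difference is local: the paper rescales each row to the unit sphere and uses homogeneity plus uniformly bounded derivatives of the normalized row map, while you get the row-wise Lipschitz bounds for $\omega_i$ and $\phi_i$ from explicit difference-of-squares and product identities, which is slightly more elementary and yields concrete constants.
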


\begin{proof}
Write $u_i=u_{0,i}+\Delta_i$. By \eqref{eq:Xnorm},
$\norm{\Delta_i}\le\delta\sqrt{w_{0,i}}$, so
\[
 \abs{u_i^\transpose u_i-w_{0,i}}
 \le2\norm{u_{0,i}}\norm{\Delta_i}+\norm{\Delta_i}^2
 \le(2\delta+\delta^2)w_{0,i}.
\]
For sufficiently small $c_0$, this is at most $w_{0,i}/2$.

Consider
\[
 \cR(u)=\frac{u\otimes u}{(u^\transpose u)^{1/2}}.
\]
After writing $u=\sqrt{w_{0,i}}\,\widehat u$, all normalized rows lie
in one fixed complex neighborhood of the real unit sphere. On that
neighborhood the selected branch of
$\widehat u\mapsto(\widehat u^\transpose\widehat u)^{1/2}$ and the
normalized row map
$\widehat u\mapsto\widehat u\otimes\widehat u/
(\widehat u^\transpose\widehat u)^{1/2}$ are holomorphic with
universally bounded derivatives. Both unnormalized maps are
homogeneous of degree one. Thus integration along the segment from
$u_{0,i}$ to $u_i$ gives, with no dependence on $w_{0,i}$,
\[
 \abs{\omega_i(U)-\omega_i(U_0)}+\norm{\cR(u_i)-\cR(u_{0,i})}
 \le C\norm{\Delta_i}.
\]
Squaring and summing proves \eqref{eq:omegaLip} and the Frobenius part of \eqref{eq:PhiLip}; division by $\sqrt{w_{0,i}}$ proves the row part.

At the real base point, $G_0=\Phi_0\Phi_0^\transpose\preceq\Id$,
hence $\norm{\Phi_0}_{\op}\le1$. Equation \eqref{eq:PhiLip} gives
$\norm\Phi_{\op}\le1+C\delta$. From
\[
 G-G_0=(\Phi-\Phi_0)\Phi^\transpose+\Phi_0(\Phi-\Phi_0)^\transpose
\]
we obtain
\[
 \norm{G-G_0}_{\op}
 \le \norm{\Phi-\Phi_0}_{\op}
 \bigl(\norm{\Phi}_{\op}+\norm{\Phi_0}_{\op}\bigr)
 \le C\norm{U-U_0}_{\cX}.
\]
For the $i$th row, use
\[
 e_i^\transpose(G-G_0)
 =(\phi_i-\phi_{0,i})^\transpose\Phi^\transpose
 +\phi_{0,i}^\transpose(\Phi-\Phi_0)^\transpose
\]
and $\norm{\phi_{0,i}}=\sqrt{w_{0,i}}$. The already proved
Frobenius and relative-row bounds yield
\[
 \norm{e_i^\transpose(G-G_0)}_2
 \le C\sqrt{w_{0,i}}\norm{U-U_0}_{\cX},
\]
which proves the row part of \eqref{eq:GLip}. Holomorphy follows from
the selected square-root branches and matrix multiplication.
\end{proof}

\begin{lemma}[Resolvent stability]\label{lem:resolvent}
Let
\[
 R(U)=(a\Id+c_pG(U))^{-1}.
\]
There are universal $c_1,C>0$ such that, whenever
$\norm{U-U_0}_{\cX}\le\delta\le c_1a$,
\begin{equation}
 \norm{R(U)}_{\op}\le\frac2a.
 \label{eq:Rop}
\end{equation}
For every $m\times q$ matrix $X$,
\begin{equation}
 \norm{R(U)X}_{\mathrm{row},0}
 \le Ca^{-1}\norm{X}_{\mathrm{row},0}+Ca^{-2}\norm{X}_{\op}.
 \label{eq:Rrow}
\end{equation}
Furthermore,
\begin{equation}
 \norm{N(U)-N(U_0)}_{\op}
 +\norm{N(U)-N(U_0)}_{\mathrm{row},0}
 \le Ca^{-3}\norm{U-U_0}_{\cX}.
 \label{eq:Ndiff}
\end{equation}
Finally, for every $y\in\C^m$,
\begin{align}
 \norm{N(U)y}_2&\le Ca^{-1}\norm{y}_2,
 \label{eq:Nadm1}\\
 \norm{W_0^{-1/2}N(U)y}_\infty
 &\le Ca^{-1}\norm{W_0^{-1/2}y}_\infty+Ca^{-2}\norm{y}_2.
 \label{eq:Nadm2}
\end{align}
\end{lemma}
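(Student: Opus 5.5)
The plan is to treat everything as a perturbation of the real base point. There Loewner-order information is available, and every row-wise estimate will follow from one algebraic identity for the resolvent together with the row structure of $G$ inherited from $\Phi$.

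\emph{Operator bound.} At $U_0$ the matrix $G_0=\Phi_0\Phi_0^\transpose=W_0^{-1/2}P_0^{\had2}W_0^{-1/2}$ is real symmetric with $0\preceq G_0\preceq\Id$ by \eqref{eq:basicLewis}. Hence $a\Id+c_pG_0\succeq a\Id$ and $\norm{R(U_0)}_{\op}\le a^{-1}$. By \eqref{eq:GLip}, $\norm{G-G_0}_{\op}\le C\delta\le Cc_1a$. Writing
\[
a\Id+c_pG=(a\Id+c_pG_0)\bigl(\Id+c_pR_0(G-G_0)\bigr),
\]
a Neumann series with $c_1$ small enough that $Cc_1\le1/2$ gives \eqref{eq:Rop}. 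This works in the complex setting because only operator norms are used away from the base point.

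\emph{Row bound.} I will use the identity
\[
RX=a^{-1}\bigl(X-c_pGRX\bigr),
\]
which avoids any fixed-point argument in the row norm. The key structural fact is that for any $m\times q$ matrix $Z$,
\[
\norm{e_i^\transpose GZ}_2\le C\sqrt{w_{0,i}}\,\norm{Z}_{\op}.
\]
To see this, split $G=G_0+(G-G_0)$. The row $e_i^\transpose G_0=\phi_{0,i}^\transpose\Phi_0^\transpose$ has norm at most $\norm{\phi_{0,i}}\norm{\Phi_0}_{\op}\le\sqrt{w_{0,i}}$. The rows of $G-G_0$ have norm at most $C\delta\sqrt{w_{0,i}}$ by the row part of \eqref{eq:GLip}. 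Applying this with $Z=RX$ and \eqref{eq:Rop} gives
\[
\norm{RX}_{\mathrm{row},0}\le a^{-1}\norm{X}_{\mathrm{row},0}+Ca^{-2}\norm{X}_{\op},
\]
which is \eqref{eq:Rrow}.

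\emph{Difference of $N$.} By \eqref{eq:Nframe}, $N=2(\Id-G)R$, so
\[
N-N_0=-2(G-G_0)R+2(\Id-G_0)(R-R_0),\qquad R-R_0=-c_pR(G-G_0)R_0 .
\]
For the operator norms, the first term is at most $C\delta a^{-1}$ and the second at most $C\delta a^{-2}$.

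For the row norms, the first term's row $i$ is $e_i^\transpose(G-G_0)R$, of norm at most $C\delta\sqrt{w_{0,i}}\,a^{-1}$. For the second term, $\Id-G_0$ maps a matrix $Y$ to rows of norm at most $\norm{Y}_{\mathrm{row},0}\sqrt{w_{0,i}}+\sqrt{w_{0,i}}\norm{Y}_{\op}$, by the $G_0$ row fact. Take $Y=R-R_0$ and apply \eqref{eq:Rrow} with $X=(G-G_0)R_0$. This $X$ has row norm at most $C\delta a^{-1}$ and operator norm at most $C\delta a^{-1}$. The result is
\[
\norm{R-R_0}_{\mathrm{row},0}\le C\delta a^{-2}+C\delta a^{-3},
\]
and together with $\norm{R-R_0}_{\op}\le C\delta a^{-2}$ this yields \eqref{eq:Ndiff}.

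\emph{Admissible bounds.} Estimate \eqref{eq:Nadm1} is immediate from $\norm{\Id-G}_{\op}\le2+C\delta$ and \eqref{eq:Rop}. For \eqref{eq:Nadm2}, view $y$ as an $m\times1$ matrix, so that $\norm{y}_{\mathrm{row},0}=\norm{W_0^{-1/2}y}_\infty$. Write $Ny=2Ry-2GRy$, bound $Ry$ by \eqref{eq:Rrow}, and bound $GRy$ by the $G$ row fact together with $\norm{Ry}_2\le2a^{-1}\norm{y}_2$.

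The main obstacle is ensuring that the relative-row norms close without losing factors of $w_{0,i}^{-1/2}$ or extra powers of $a^{-1}$. The resolvent identity combined with the factorization $G=\Phi\Phi^\transpose$ through rows of size $\sqrt{w_{0,i}}$ is what should make this work. The only real care needed is in the complex setting: I must use transposes rather than adjoints, via \eqref{eq:complexTransposeNorm}, and invoke positivity only at $U_0$.
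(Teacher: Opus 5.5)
Your proposal is correct and follows essentially the same route as the paper: a Neumann series at the real base point, the identity $R=a^{-1}(\Id-c_pGR)$ combined with the $\sqrt{w_{0,i}}$ row bound for $G=\Phi\Phi^\transpose$, and then \eqref{eq:Rrow} applied to $X=(G-G_0)R_0$. The only difference is cosmetic: you split $N-N_0$ into two terms via $N=2(\Id-G)R$, whereas the paper uses $a+c_p=1$ to write $N=\tfrac{2}{c_p}(R-\Id)$, which gives the single term $N-N_0=-2R(G-G_0)R_0$.
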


\begin{proof}
At $U_0$, $G_0$ is real symmetric and
$0\preceq G_0\preceq\Id$. Thus the smallest singular value of
$a\Id+c_pG_0$ is at least $a$. Lemma~\ref{lem:rowmap} and a Neumann
series prove \eqref{eq:Rop} when $C\delta\le a/2$. Here and below
$1/2\le c_p<1$, because $p\ge4$.

The identity
\begin{equation}
 R=a^{-1}\Id-\frac {c_p}a GR
 \label{eq:Ridentity}
\end{equation}
will be used for row estimates. Since $G=\Phi\Phi^\transpose$, Lemma~\ref{lem:rowmap} gives
\begin{equation}
 \norm{e_i^\transpose G}_2
 \le\norm{\phi_i}_2\norm{\Phi^\transpose}_{\op}
 \le C\sqrt{w_{0,i}}.
 \label{eq:Grow}
\end{equation}
Applying \eqref{eq:Ridentity} to $X$, and using
$\norm{RX}_{\op}\le2a^{-1}\norm{X}_{\op}$, yields row by row
\[
 \frac{\norm{e_i^\transpose RX}_2}{\sqrt{w_{0,i}}}
 \le a^{-1}\frac{\norm{e_i^\transpose X}_2}{\sqrt{w_{0,i}}}
 +\frac{c_p}{a}
 \frac{\norm{e_i^\transpose G}_2}{\sqrt{w_{0,i}}}
 \norm{RX}_{\op}
 \le a^{-1}\norm{X}_{\mathrm{row},0}
 +Ca^{-2}\norm{X}_{\op}.
\]

Because $a+c_p=1$ and $R$ commutes with $G$,
\begin{equation}
 N(U)=2(\Id-G)R=\frac2{c_p}(R-\Id).
 \label{eq:Nres}
\end{equation}
The resolvent identity gives
\begin{equation}
 N(U)-N(U_0)
 =-2R(U)(G(U)-G_0)R(U_0).
 \label{eq:Ndiffexact}
\end{equation}
The operator part of \eqref{eq:Ndiff} follows from \eqref{eq:GLip} and \eqref{eq:Rop}. For the row part, the right resolvent first gives
\[
 \norm{(G-G_0)R_0}_{\op}
 \le Ca^{-1}\norm{U-U_0}_{\cX}
\]
and, directly on each row,
\[
 \norm{(G-G_0)R_0}_{\mathrm{row},0}
 \le\norm{G-G_0}_{\mathrm{row},0}\norm{R_0}_{\op}
 \le Ca^{-1}\norm{U-U_0}_{\cX}.
\]
Apply \eqref{eq:Rrow} to
\[
 X=(G-G_0)R_0.
\]
The two terms are respectively bounded by
$Ca^{-2}\norm{U-U_0}_{\cX}$ and
$Ca^{-3}\norm{U-U_0}_{\cX}$. This proves the row part of
\eqref{eq:Ndiff}; the weaker common bound $Ca^{-3}$ also covers the
operator part since $a\le1/2$. Finally, for a one-column matrix $y$,
\[
 \norm{y}_{\mathrm{row},0}
 =\norm{W_0^{-1/2}y}_\infty,\qquad
 \norm{y}_{\op}=\norm{y}_2.
\]
Thus \eqref{eq:Nres}, \eqref{eq:Rop}, \eqref{eq:Rrow}, and
$2/c_p\le4$ give \eqref{eq:Nadm1}--\eqref{eq:Nadm2}.
\end{proof}

\begin{proposition}[Identification of the horizontal physical frame]
\label{prop:physicalFrame}
Let $y_\tau=x+\tau e$ range over a real interval
$I\ni0$ contained in $\operatorname{int}K$. The physical horizontal
frame constructed above is the unique real analytic orthonormal frame
for $\operatorname{col}(\widehat B_\tau)$ with initial value
\eqref{eq:physicalU0} and gauge \eqref{eq:horizontal}. Every row of
this frame is nonzero. On every
subinterval on which it lies in the row-map neighborhood of
Lemma~\ref{lem:rowmap}, it solves \eqref{eq:frameODE}.
Conversely, a real solution of \eqref{eq:frameODE} with initial value
\eqref{eq:physicalU0} agrees with the physical horizontal frame on
every connected interval on which it remains in that neighborhood.
If a finite endpoint lies in $I$ and the solution remains in a closed
subball of the neighborhood up to that endpoint, the identification
continues across it.
\end{proposition}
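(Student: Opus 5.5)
Uniqueness and existence of the physical horizontal frame come from the gauge ODE in Lemma~\ref{lem:horizontalgauge} (Appendix~\ref{app:frame}). Any two real analytic orthonormal frames for $\operatorname{col}(\widehat B_\tau)$ differ by a real analytic orthogonal gauge $U_\tau=\widetilde U_\tau O_\tau$. If both satisfy \eqref{eq:horizontal}, then differentiating gives
\[
 0=U_\tau^\transpose U_\tau'=O_\tau^\transpose\widetilde U_\tau^\transpose\widetilde U_\tau' O_\tau+O_\tau^\transpose O_\tau'=O_\tau^\transpose O_\tau'.
\]
Hence $O_\tau'=0$, and the common initial value \eqref{eq:physicalU0} forces $O_\tau\equiv\Id_d$. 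Nonvanishing of the rows is Lemma~\ref{lem:physicalFrameAlgebra} applied at each $y_\tau$, since $U_\tau$ is an orthonormal frame for $\operatorname{col}(\widehat B_{y_\tau})$ and Lewis weights are strictly positive.

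For the forward direction, I would differentiate the physical frame on a subinterval where it stays in the row-map neighborhood. Write $U_\tau=\widehat B_\tau M_\tau$ with $M_\tau$ invertible and analytic. By \eqref{eq:LewisDerivative} and $s_\tau'=-s_\tau^{\had2}$, we have $\widehat B_\tau'=\Diag(z_\tau)\widehat B_\tau$, where $z_\tau=-s(\tau)-\beta W_{y_\tau}^{-1/2}N_{y_\tau}W_{y_\tau}^{1/2}s(\tau)$. Then
\[
 U'=\Diag(z_\tau)U+UM^{-1}M'.
\]
The horizontal condition together with $U^\transpose U=\Id$ projects this to $U'=(\Id-UU^\transpose)\Diag(z_\tau)U$. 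It remains to identify $z_\tau$ with $z(U_\tau,\tau)$ in \eqref{eq:zclosed}. This follows from \eqref{eq:physicalWeights}, which gives $W(U_\tau)=W_{y_\tau}$, and from \eqref{eq:physicalGN}, which gives $N(U_\tau)=N_{y_\tau}$. Both identities require the physical frame to lie in the neighborhood, so that the selected square-root branch agrees with the positive root and $G$, $N$ are the holomorphic maps of Lemmas~\ref{lem:rowmap}--\ref{lem:resolvent}.

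For the converse, I would use ODE uniqueness. On the row-map neighborhood, $\cF$ is holomorphic in $U$ (Lemmas~\ref{lem:rowmap} and \ref{lem:resolvent}) and analytic in $\tau$ on $I$ by \eqref{eq:sexplicit}, so it is locally Lipschitz. The physical frame and the given real solution share the initial value $U_0$ and solve the same equation while both lie in the neighborhood.

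The main obstacle is that the forward direction needs the physical frame to be in the neighborhood, whereas the converse assumes only that the solution is. I would handle this with a continuation argument. Let $\tau^*=\sup\{\tau:\text{solution equals physical frame on }[0,\tau]\}$ within the connected interval where the solution remains in the neighborhood. Then $\tau^*>0$, by openness of the neighborhood at $U_0$ and local uniqueness. At $\tau^*$, the two frames agree by continuity, so the physical frame lies in the neighborhood there. By openness it stays there a bit longer, the forward direction applies, and local Lipschitz uniqueness extends the agreement beyond $\tau^*$; the same argument runs for negative $\tau$.

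At a finite endpoint of the interval that lies in $I$, suppose the solution remains in a closed subball. Then the two frames agree up to the endpoint, by continuity of both paths: the physical frame is analytic on $I$. So the physical frame lies in that closed subball at the endpoint, which sits inside the open neighborhood. The same local argument then continues the identification across the endpoint.
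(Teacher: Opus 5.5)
Your proposal is correct and follows essentially the same route as the paper. The steps match: uniqueness through an orthogonal gauge $O$ with $O'=0$; nonzero rows from Lemma~\ref{lem:physicalFrameAlgebra}; the forward equation from $\widehat B_\tau'=\Diag(z_\tau)\widehat B_\tau$, the horizontal projection, and the identifications $W(U_\tau)=W_{y_\tau}$, $N(U_\tau)=N_{y_\tau}$; and the converse by local Lipschitz uniqueness continued along the interval. Your supremum argument simply spells out the paper's ``reapplying uniqueness at every point.''

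The one small variation is at the endpoint. You get the limit of the solution from continuity of the physical frame, which exists on all of $I$ via the linear gauge ODE. The paper instead bounds the vector field on a compact neighborhood. Both arguments work.

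One citation in the product-rule step should be tightened. That step needs the matrix identity $A_{y_\tau}'=-\Diag(s(\tau))A_{y_\tau}$, not only its consequence $s'=-s^{\had2}$. Your formula for $z_\tau$ is nonetheless correct.
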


\begin{proof}
Lemma~\ref{lem:LewisAnalytic} shows that
$\widehat B_\tau=W_{y_\tau}^\beta A_{y_\tau}$ is real analytic and
full column rank on $I$. The analytic orthonormal frame
\[
 \widetilde U_\tau
 =\widehat B_\tau
 \bigl(\widehat B_\tau^\transpose\widehat B_\tau\bigr)^{-1/2}
\]
spans its column space and satisfies
$\widetilde U_0=\widehat B_0=U_0$. Applying the gauge construction in
Lemma~\ref{lem:horizontalgauge}, with initial gauge $\Id_d$, gives the
stated frame. If $U$ and $\overline U$ are two such frames, then
$\overline U=UO$ for an orthogonal matrix path $O$ with $O(0)=\Id$.
Their horizontal conditions give $O'=0$, so $O=\Id$; this proves
uniqueness without invoking any choice of orthonormalization.
Lemma~\ref{lem:physicalFrameAlgebra} gives the nonzero-row claim and
\eqref{eq:physicalGN} identifies its row maps with the physical Lewis
quantities.

The slack identity \eqref{eq:sexplicit} gives
$A_{y_\tau}'=-\Diag(s(\tau))A_{y_\tau}$. Moreover,
\eqref{eq:LewisDerivative}, followed by
\eqref{eq:physicalGN}, gives
\[
 \frac{\dd}{\dd\tau}\log w_{y_\tau}
 =-W(U_\tau)^{-1/2}N(U_\tau)v(U_\tau,\tau).
\]
The product rule therefore gives
\[
 \widehat B_\tau'
 =\Diag\!\left(
 -s(\tau)-\beta W(U_\tau)^{-1/2}
 N(U_\tau)v(U_\tau,\tau)\right)\widehat B_\tau.
\]
The horizontal-gauge formula in Lemma~\ref{lem:horizontalgauge} now
yields precisely \eqref{eq:frameODE}. Lemmas~\ref{lem:rowmap} and
\ref{lem:resolvent} make its right side real analytic, hence locally
Lipschitz in the $\cX$ norm, throughout the stated neighborhood.
Thus a real solution of the closed equation and the physical frame
coincide near zero by ODE uniqueness. Reapplying uniqueness at every
point of their common interval proves identification on its connected
component. If a finite endpoint in $I$ is approached inside a closed
subball, the rational slack functions are regular there and all
$U$-dependent denominators stay uniformly away from zero. The vector
field is bounded on a slightly larger compact neighborhood, so the
closed solution has a limit. Local existence and uniqueness at that
limiting point extend both solutions and the identification across
the endpoint.
\end{proof}

\begin{remark}\label{rem:Nrowfalse}
A uniform bound on $\norm{N(U)}_{\mathrm{row},0}$ is generally false. On the null space of $G$, $N$ has an identity component of size comparable to $p$. Lemma~\ref{lem:resolvent} controls instead differences of $N$, derivatives of $N$, and the action of $N$ on vectors whose Euclidean and relative-coordinate norms are both controlled.
\end{remark}

\subsection{Complex existence and Cauchy estimates}

The elementary analytic ODE lemma used next is proved in Appendix~\ref{app:analyticODE}.

\begin{lemma}[Uniform analytic flow]\label{lem:analyticflow}
There are universal constants $c,C>0$ such that the solution of \eqref{eq:frameODE} extends holomorphically to
\begin{equation}
 \abs\tau<2\rho_p,
 \qquad \rho_p=cp^{-3}=c'a^3,
 \label{eq:rhop}
\end{equation}
and satisfies
\begin{equation}
 \norm{U(\tau)-U_0}_{\cX}\le c_1a/2.
 \label{eq:Uball}
\end{equation}
On the same disk,
\begin{equation}
 U(\tau)^\transpose U(\tau)=\Id_d.
 \label{eq:complexStiefel}
\end{equation}
\end{lemma}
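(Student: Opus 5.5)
We will prove Lemma~\ref{lem:analyticflow} by a Picard (majorant) argument for the complexified equation \eqref{eq:frameODE} in the Banach space of $m\times d$ complex matrices with the norm $\norm{\cdot}_{\cX}$ of \eqref{eq:Xnorm}. Let $\mathcal B:=\{U:\norm{U-U_0}_{\cX}\le c_1a/2\}$, where $c_1\le c_0$ is the constant of Lemma~\ref{lem:resolvent}. The core estimate is a uniform bound
\[
 \sup_{U\in\mathcal B,\ \abs\tau\le2\rho_p}\norm{\cF(U,\tau)}_{\cX}\le Ca^{-2}.
\]
Granting this, an integral curve starting at $U_0$ moves at most $Ca^{-2}\abs\tau$ in $\cX$. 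It therefore stays in $\mathcal B$ as long as $\abs\tau\le c\,a^3$. This is the source of $\rho_p=c'a^3\asymp p^{-3}$.

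\emph{Bound on the vector field.} For $\abs\tau\le2\rho_p\le1/4$, \eqref{eq:sexplicit} and \eqref{eq:sinitial} give $\abs{s_i(\tau)}\le2\abs{s_i(0)}\le2w_{0,i}^{1/p}$. Together with $\norm{W_0^{1/2}s(0)}\le1$, this yields admissible bounds on $v=W(U)^{1/2}s$. Lemma~\ref{lem:rowmap} gives $\abs{w_i(U)-w_{0,i}}\le w_{0,i}/2$, so $\norm{v}_2\le C$ and $\norm{W_0^{-1/2}v}_\infty\le C$. Note that $w_{0,i}^{1/p-1/2}\le w_{0,i}^{-1/2}$, and this loss is exactly what the relative-row norm absorbs.

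The term $\beta W^{-1/2}Nv$ is handled with \eqref{eq:Nadm1}--\eqref{eq:Nadm2}. These give $\norm{Nv}_2\le Ca^{-1}$ and $\norm{W_0^{-1/2}Nv}_\infty\le Ca^{-2}$, so $\norm{W^{-1/2}Nv}_\infty\le Ca^{-2}$. Hence every entry of $z(U,\tau)$ is bounded by $Ca^{-2}$. The $s$-part is bounded as well, since $\abs{s_i}\le2$.

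Next, write $\Diag(z)U=\Diag(z)U_0+\Diag(z)(U-U_0)$. Two facts control the rows:
\[
 \norm{e_i^\transpose\Diag(z)U}\le\abs{z_i}\cdot C\sqrt{w_{0,i}}, \qquad \norm{\Diag(z)U_0}_{\mathrm F}^2=\sum_i\abs{z_i}^2w_{0,i}.
\]
For the Frobenius part, I will use the sharper $\ell_2$ information rather than the sup bound. Specifically, $W_0^{1/2}s(0)$ has norm at most $1$, and $W_0^{1/2}\cdot W_0^{-1/2}Nv=Nv$ has norm at most $Ca^{-1}$. This gives $\norm{\Diag(z)U}_{\mathrm F}\le Ca^{-2}$, uniformly in $m,d$.

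The projector $\Id-UU^\transpose$ must also be controlled in both parts of the $\cX$ norm. In operator norm, $\norm{UU^\transpose}_{\op}\le C$ on $\mathcal B$. For rows, $\norm{e_i^\transpose UU^\transpose X}\le\norm{u_i}\norm{U^\transpose X}\le C\sqrt{w_{0,i}}\norm{X}_{\mathrm F}$. Together these give $\norm{\cF}_{\cX}\le Ca^{-2}$.

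\emph{Existence on the disk.} The map $\cF$ is holomorphic in $(U,\tau)$ on $\mathcal B\times\{\abs\tau<1/2\}$ by Lemmas~\ref{lem:rowmap} and~\ref{lem:resolvent}. A Cauchy estimate on a slightly larger ball then makes $\cF$ Lipschitz in $\cX$ with constant $Ca^{-3}$. Along each ray $\tau=re^{i\theta}$ we run Picard iteration, or invoke the analytic Cauchy--Kovalevskaya ODE theorem from Appendix~\ref{app:analyticODE}. This yields a holomorphic solution on $\abs\tau<2\rho_p$ that satisfies \eqref{eq:Uball}, provided $2\rho_p\cdot Ca^{-2}\le c_1a/2$. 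The constant $c$ in $\rho_p$ is fixed by that requirement.

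\emph{Stiefel identity.} Set $M=U^\transpose U$. Since $\cF=(\Id-UU^\transpose)Y$ with $Y=\Diag(z)U$, we get
\[
 M'=\cF^\transpose U+U^\transpose\cF=Y^\transpose(U-UM)+(U-MU^\transpose)Y\cdot\text{(appropriate ordering)},
\]
which is linear homogeneous in $\Id-M$: explicitly, $M'=Y^\transpose U(\Id-M)+(\Id-M)U^\transpose Y$. Since $M(0)=\Id$, uniqueness for this linear holomorphic ODE forces $M\equiv\Id$, proving \eqref{eq:complexStiefel}. Here transpose is the algebraic one, so the argument is holomorphic.

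The main obstacle is the uniform $\cX$ bound on $\cF$ without dependence on $\min_iw_{0,i}$ or on $m$. As Remark~\ref{rem:Nrowfalse} warns, $N$ has no uniform row bound. The argument must therefore apply $N$ only to $v$, whose Euclidean and relative-coordinate norms are both bounded, and track carefully the $a$-powers that set $\rho_p\asymp a^3$.
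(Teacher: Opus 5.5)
Your proposal is correct and follows the paper's proof essentially step by step. You bound $\sup\norm{\cF}_{\cX}\le Ca^{-2}$ via the admissible bounds on $v$ and $N(U)v$ from \eqref{eq:Nadm1}--\eqref{eq:Nadm2}, use the weighted $\ell_2$ bound on $z$ for the Frobenius part (which in fact gives $Ca^{-1}$), handle $\Id-UU^\transpose$ through the row identity, apply the analytic ODE lemma with $\delta=c_1a$ and $M=Ca^{-2}$ to get $\rho_p\asymp a^3$, and close with the homogeneous linear equation for $\Id-U^\transpose U$. The only blemish is the garbled intermediate display in the Stiefel step, but your final identity $M'=Y^\transpose U(\Id-M)+(\Id-M)U^\transpose Y$ with $Y=\Diag(z)U$ is exactly the paper's $Q'=\mathcal M(\Id-Q)+(\Id-Q)\mathcal M$.
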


\begin{proof}
Decrease $c_1$ if necessary so that $c_1a\le c_0$, where $c_0$ is
the row-map radius in Lemma~\ref{lem:rowmap}, and set
\[
 \delta=c_1a,\qquad
 \mathcal D=\{(U,\tau):
 \norm{U-U_0}_{\cX}<\delta,\ \abs\tau<1/4\}.
\]
For $(U,\tau)\in\mathcal D$, Lemma~\ref{lem:rowmap} gives
\begin{equation}
 2^{-1/2}\sqrt{w_{0,i}}
 \le\abs{\omega_i(U)}
 \le(3/2)^{1/2}\sqrt{w_{0,i}},
 \qquad 1\le i\le m.
 \label{eq:omegaCoordinateComparison}
\end{equation}
Indeed, $\omega_i(U)^2=u_i^\transpose u_i$ lies in the disk of
radius $w_{0,i}/2$ around $w_{0,i}$.

By \eqref{eq:sinitial}, $\abs{s_i(0)}\le1$. Hence
$\abs{1+\tau s_i(0)}\ge3/4$ on $\abs\tau\le1/4$, and
\eqref{eq:sexplicit} gives
\begin{equation}
 \norm{s(\tau)}_\infty\le2,
 \qquad \norm{W_0^{1/2}s(\tau)}_2\le2.
 \label{eq:scomplex}
\end{equation}
The second bound follows term by term from
\[
 \sum_iw_{0,i}\abs{s_i(\tau)}^2
 \le(4/3)^2\sum_iw_{0,i}\abs{s_i(0)}^2
 \le(4/3)^2.
\]
Since $v_i=\omega_i(U)s_i(\tau)$,
\eqref{eq:omegaCoordinateComparison} and \eqref{eq:scomplex} imply
\begin{equation}
 \norm{v}_2
 \le C\norm{W_0^{1/2}s(\tau)}_2\le C,
 \qquad
 \norm{W_0^{-1/2}v}_\infty
 \le C\norm{s(\tau)}_\infty\le C.
 \label{eq:vcomplex}
\end{equation}

Put $y=N(U)v$. Equations \eqref{eq:Nadm1},
\eqref{eq:Nadm2} and \eqref{eq:vcomplex} give
\begin{equation}
 \norm{y}_2\le Ca^{-1},
 \qquad
 \norm{W_0^{-1/2}y}_\infty\le Ca^{-2}.
 \label{eq:NvComplex}
\end{equation}
The diagonal operator
$W_0^{1/2}W(U)^{-1/2}$ has operator norm at most $C$ by
\eqref{eq:omegaCoordinateComparison}. Consequently
\[
 \norm{W(U)^{-1/2}y}_\infty
 \le C\norm{W_0^{-1/2}y}_\infty\le Ca^{-2},
 \qquad
 \norm{W_0^{1/2}W(U)^{-1/2}y}_2
 \le C\norm{y}_2\le Ca^{-1}.
\]
Since $0<\beta<1/2$, \eqref{eq:zclosed} and
\eqref{eq:scomplex} now give separately in the two relevant norms
\begin{equation}
 \norm{z}_\infty\le Ca^{-2},
 \qquad \norm{W_0^{1/2}z}_2\le Ca^{-1}.
 \label{eq:zcomplexbounds}
\end{equation}

Let $Z=\Diag(z)$. Since $U$ remains in a fixed operator-norm neighborhood of $U_0$,
\[
 \norm{u_i}_2\le\norm{u_{0,i}}_2+\norm{u_i-u_{0,i}}_2
 \le(1+\delta)\sqrt{w_{0,i}}\le C\sqrt{w_{0,i}},
\]
and therefore
\[
 \norm{ZU}_{\mathrm F}^2
 =\sum_i\abs{z_i}^2\norm{u_i}_2^2
 \le C\norm{W_0^{1/2}z}_2^2\le Ca^{-2}.
\]
For the relative row norm,
\begin{equation}
e_i^\transpose\cF(U,\tau)
 =z_i u_i^\transpose-u_i^\transpose(U^\transpose ZU).
 \label{eq:Frowidentity}
\end{equation}
Here
\[
 \norm{U}_{\op}
 \le\norm{U_0}_{\op}+\norm{U-U_0}_{\mathrm F}
 \le1+c_1a\le C
\]
and, by \eqref{eq:complexTransposeNorm},
\[
 \norm{U^\transpose ZU}_{\op}
 \le\norm{U^\transpose}_{\op}\norm{ZU}_{\op}
 \le C\norm{ZU}_{\mathrm F}\le Ca^{-1}.
\]
Dividing \eqref{eq:Frowidentity} by $\sqrt{w_{0,i}}$ and using the
last two displays gives, for every $i$,
\[
 \frac{\norm{e_i^\transpose\cF(U,\tau)}_2}
 {\sqrt{w_{0,i}}}
 \le C\abs{z_i}+C\norm{U^\transpose ZU}_{\op}
 \le Ca^{-2}.
\]
The Frobenius estimate follows directly from
\[
 \norm{\cF(U,\tau)}_{\mathrm F}
 \le\bigl(1+\norm{U}_{\op}^2\bigr)\norm{ZU}_{\mathrm F}
 \le Ca^{-1}.
\]
Thus
\begin{equation}
 \sup_{\norm{U-U_0}_{\cX}<\delta,\ \abs\tau<1/4}
 \norm{\cF(U,\tau)}_{\cX}\le Ca^{-2}.
 \label{eq:Fbound}
\end{equation}
On $\mathcal D$, every denominator
$1+\tau s_i(0)$ is nonzero, every chosen square-root branch remains
inside its defining disk, and
$a\Id+c_pG(U)$ is invertible by Lemma~\ref{lem:resolvent}.
Therefore $s$, $\omega$, $W(U)^{-1/2}$, $N$, $v$, $z$ and
$\cF$ are jointly holomorphic on $\mathcal D$. Applying
Lemma~\ref{lem:abstractODE} with
\[
 \tau_0=\frac14,\qquad \delta=c_1a,\qquad M=Ca^{-2}
\]
gives
\[
 \min\{\tau_0,\delta/M\}
 \ge c_2a^3.
\]
Here $a\le1/2$, and every constant used above is universal:
$\norm{U_0}_{\op}=1$, while the relative-row normalization absorbs
all dependence on $m,d$, and $\min_iw_{0,i}$. Thus the ODE lemma gives
a holomorphic solution on a disk of radius $c_3a^3$ that stays in the
half ball. Define $\rho_p=c_3a^3/2$, after decreasing $c_3$ so that
$2\rho_p<1/4$. This proves \eqref{eq:Uball} on
$\abs\tau<2\rho_p$, with a dimension-free constant.

To verify the complex Stiefel constraint, set $Q=U^\transpose U$ and
$\mathcal M=U^\transpose ZU$. Directly from \eqref{eq:frameODE},
\[
 Q'=\mathcal M(\Id-Q)+(\Id-Q)\mathcal M.
\]
Thus $E=\Id-Q$ solves the homogeneous linear matrix equation
$E'=-\mathcal ME-E\mathcal M$ with $E(0)=0$. Uniqueness gives
$E\equiv0$, proving \eqref{eq:complexStiefel} throughout the disk.
\end{proof}

\begin{lemma}[Kernel identity on the complex flow]\label{lem:kernel}
On the disk of Lemma~\ref{lem:analyticflow},
\begin{equation}
 G(U(\tau))\omega(U(\tau))=\omega(U(\tau)),
 \qquad N(U(\tau))\omega(U(\tau))=0.
 \label{eq:Nomega}
\end{equation}
\end{lemma}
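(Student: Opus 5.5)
The plan is a direct algebraic computation. Both identities in \eqref{eq:Nomega} are pointwise in $\tau$, and they use only three facts: the complex Stiefel identity \eqref{eq:complexStiefel}, the defining relation $\omega_i^2=u_i^\transpose u_i$ for the selected square-root branch, and the invertibility of $a\Id+c_pG(U)$. The last holds on the disk because \eqref{eq:Uball} places $U(\tau)$ inside the neighborhood of Lemma~\ref{lem:resolvent}. Throughout, transposes are algebraic rather than Hermitian, so every manipulation is a polynomial identity and survives complexification.

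\emph{Step 1: $G\omega=\omega$.} First note that $\omega_i(U(\tau))\neq0$ for every $i$: by Lemma~\ref{lem:rowmap}, $u_i^\transpose u_i$ lies in the disk $\abs{z-w_{0,i}}\le w_{0,i}/2$, which does not contain $0$. Hence $\phi_i$ is well defined. Next, since $(u\otimes u)^\transpose(v\otimes v)=(u^\transpose v)^2$ in the bilinear pairing, \eqref{eq:Gframe} gives
\[
 G_{ij}=\frac{(u_i^\transpose u_j)^2}{\omega_i\omega_j}.
\]
Therefore
\[
 (G\omega)_i
 =\frac1{\omega_i}\sum_j(u_i^\transpose u_j)^2
 =\frac1{\omega_i}\,u_i^\transpose\Bigl(\sum_ju_ju_j^\transpose\Bigr)u_i
 =\frac1{\omega_i}\,u_i^\transpose(U^\transpose U)u_i .
\]
By \eqref{eq:complexStiefel}, $U^\transpose U=\Id_d$, so the right-hand side equals $u_i^\transpose u_i/\omega_i=\omega_i^2/\omega_i=\omega_i$.

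\emph{Step 2: $N\omega=0$.} Step~1 gives
\[
 (a\Id+c_pG)\omega=(a+c_p)\omega=\omega,
\]
since $a+c_p=1$. Because $a\Id+c_pG$ is invertible by Lemma~\ref{lem:resolvent}, it follows that $R\omega=\omega$. Using the form \eqref{eq:Nframe}, namely $N=2(\Id-G)R$, we obtain
\[
 N\omega=2(\Id-G)R\omega=2(\Id-G)\omega=0 .
\]

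Neither step presents a real obstacle. The only points needing care are the two already recorded: that every identity uses the algebraic transpose, so the Stiefel constraint \eqref{eq:complexStiefel} applies verbatim off the real axis, and that the square-root branch is the one satisfying $\omega_i^2=u_i^\transpose u_i$, with $\omega_i\neq0$ as shown in Step~1.
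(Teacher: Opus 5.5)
Your proposal is correct and matches the paper's proof essentially line by line. The Stiefel identity \eqref{eq:complexStiefel} gives $\sum_j(u_i^\transpose u_j)^2=u_i^\transpose u_i$, hence $G\omega=\omega$; then $a+c_p=1$ gives $R\omega=\omega$ and $N\omega=2(\Id-G)\omega=0$. Your remarks that $\omega_i\neq0$ and that $a\Id+c_pG$ is invertible via \eqref{eq:Uball} and Lemma~\ref{lem:resolvent} only make explicit what the paper uses implicitly.
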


\begin{proof}
By \eqref{eq:complexStiefel}, $\sum_j u_ju_j^\transpose=U^\transpose U=\Id$. Therefore
\[
 (G\omega)_i
 =\frac1{\omega_i}\sum_j(u_i^\transpose u_j)^2
 =\frac1{\omega_i}u_i^\transpose
 \left(\sum_j u_ju_j^\transpose\right)u_i
 =\omega_i.
\]
Thus $G\omega=\omega$. Since $a+c_p=1$, we also have
$R\omega=(a\Id+c_pG)^{-1}\omega=\omega$, and
\eqref{eq:Nframe} gives $N\omega=2(\Id-G)R\omega=0$.
\end{proof}

\begin{theorem}[Analytic horizontal-frame jets]\label{thm:jets}
Write $\omega_x=w_x^{1/2}$ coordinatewise. There are universal
constants $c,C>0$ such that, for every real base point
$x\in\operatorname{int}K$, every $k\ge1$, and every
$r_1,\ldots,r_k\in\R^d$,
\begin{align}
 \norm{D^kN_x[r_1,\ldots,r_k]}_{\op}
 &+\norm{W_x^{-1/2}D^kN_x[r_1,\ldots,r_k]}_{2\to\infty}
 \notag\\
 &\le Cp^2 k^k\rho_p^{-k}
 \prod_{j=1}^k\norm{r_j}_{g_0(x)},
 \label{eq:mixedNjet}\\
 \norm{D^k\omega_x[r_1,\ldots,r_k]}_2
 &\le C k^k\rho_p^{-k}
 \prod_{j=1}^k\norm{r_j}_{g_0(x)}.
 \label{eq:mixedomegajet}
\end{align}
For repeated directions,
\begin{align}
 \norm{D^kN_x[r^k]}_{\op}
 +\norm{W_x^{-1/2}D^kN_x[r^k]}_{2\to\infty}
 &\le Cp^2 k!\rho_p^{-k}\norm{r}_{g_0(x)}^k,
 \label{eq:diagNjet}\\
 \norm{D^k\omega_x[r^k]}_2
 &\le C k!\rho_p^{-k}\norm{r}_{g_0(x)}^k.
 \label{eq:diagomegajet}
\end{align}
\end{theorem}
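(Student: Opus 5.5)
The plan is to prove the repeated-direction bounds \eqref{eq:diagNjet}--\eqref{eq:diagomegajet} first by Cauchy estimates on the complex flow of Lemma~\ref{lem:analyticflow}. The mixed bounds \eqref{eq:mixedNjet}--\eqref{eq:mixedomegajet} then follow by polarization.

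By Lemma~\ref{lem:affineCovariance} we may assume \eqref{eq:normalization}, so $\norm{r}_{g_0(x)}=\norm{r}_2$. By homogeneity of both sides in each $r_j$, it suffices to take unit vectors. Fix a unit direction $e$ and let $U(\tau)$ be the holomorphic solution of \eqref{eq:frameODE} on $\abs\tau<2\rho_p$. By Proposition~\ref{prop:physicalFrame}, on the real segment $\abs\tau<2\rho_p$ (which lies in $\operatorname{int}K$ because $\abs{s_i(0)}\le1$ and $2\rho_p<1/4$) this solution is the physical horizontal frame. Hence \eqref{eq:physicalGN} gives $N(U(\tau))=N_{x+\tau e}$ and $\omega(U(\tau))=\omega_{x+\tau e}$. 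The functions $\tau\mapsto N(U(\tau))$ and $\tau\mapsto\omega(U(\tau))$ are therefore holomorphic extensions of $\tau\mapsto N_{x+\tau e}$ and $\omega_{x+\tau e}$. Their $k$th $\tau$-derivatives at $0$ equal $D^kN_x[e^k]$ and $D^k\omega_x[e^k]$.

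Cauchy's inequality on the circle $\abs\tau=\rho_p$ gives $\norm{D^kN_x[e^k]}\le k!\rho_p^{-k}\sup_{\abs\tau=\rho_p}\norm{N(U(\tau))-N(U_0)}$. We subtract $N(U_0)$ because, by Remark~\ref{rem:Nrowfalse}, $N$ itself is not row-bounded; the constant term does not affect derivatives with $k\ge1$. We apply this in both norms: the operator norm, and the $2\to\infty$ norm after left multiplication by the fixed matrix $W_x^{-1/2}=W_0^{-1/2}$, which is exactly $\norm{\cdot}_{\mathrm{row},0}$. By \eqref{eq:Uball} and \eqref{eq:Ndiff}, this supremum is at most $Ca^{-3}\cdot c_1a/2\le Ca^{-2}\le Cp^2$, which gives \eqref{eq:diagNjet}. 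For $\omega$, \eqref{eq:omegaLip} and \eqref{eq:Uball} bound $\norm{\omega(U)-\omega(U_0)}_2\le C$, which gives \eqref{eq:diagomegajet}. The estimates are vector-valued, but Cauchy's integral formula holds for Banach-valued holomorphic maps, so the norms pass through the integral.

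For mixed directions, the multilinear symmetric form $D^kN_x$ is recovered from its diagonal by the polarization identity
\[
 D^kN_x[r_1,\ldots,r_k]=\frac1{k!2^k}\sum_{\sigma\in\{\pm1\}^k}\Bigl(\prod_j\sigma_j\Bigr)D^kN_x\Bigl[\Bigl(\sum_j\sigma_jr_j\Bigr)^k\Bigr].
\]
Each $\norm{\sum_j\sigma_jr_j}\le k$, so the diagonal bound gives at most $(k!2^k)^{-1}2^k\,Cp^2k!\rho_p^{-k}k^k=Cp^2k^k\rho_p^{-k}$, which is \eqref{eq:mixedNjet}. The same computation gives \eqref{eq:mixedomegajet}.

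The only delicate point is the identification step. The Cauchy circle is complex, while the physical frame is defined only for real $\tau$. What is needed is that the holomorphic solution restricted to the real diameter coincides with the physical frame and stays inside the row-map neighborhood there. This is guaranteed by \eqref{eq:Uball} together with the converse part of Proposition~\ref{prop:physicalFrame}, so the holomorphic extension is unique and the real derivatives agree with the complex ones.
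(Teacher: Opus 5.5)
Your proposal is correct and follows essentially the same route as the paper: affine normalization, identification of the holomorphic flow of Lemma~\ref{lem:analyticflow} with the physical frame, Cauchy estimates on $\abs\tau=\rho_p$ for $N(U(\tau))-N(U_0)$ in the combined operator/relative-row norm (bounded by $Ca^{-3}\cdot c_1a/2\le Cp^2$) and for $\omega(U(\tau))-\omega(U_0)$, and then polarization with the $k^k/k!$ loss. The only cosmetic difference is that you identify the two frames on the whole real diameter via the converse part of Proposition~\ref{prop:physicalFrame}, whereas the paper uses the identification only for real $\tau$ near $0$, which already suffices for derivatives at the origin.
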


\begin{proof}
We first prove the repeated-direction estimates. Given $x$ and
$r\ne0$, use the affine map
\[
 \Psi(\widetilde x)=x+g_0(x)^{-1/2}\widetilde x.
\]
Then $\widetilde g_0(0)=\Id_d$ and the direction corresponding to
$r$ is $\widetilde r=g_0(x)^{1/2}r$, whose Euclidean norm is
$\norm{r}_{g_0(x)}$. Equations \eqref{eq:NjetCovariance} and
\eqref{eq:localNormCovariance}, together with invariance of the Lewis
weights at corresponding points, show that it suffices by homogeneity
to work under \eqref{eq:normalization} with $\norm{r}=1$.

Let $U(\tau)$ be the analytic solution in
Lemma~\ref{lem:analyticflow}, with initial frame
\eqref{eq:physicalU0}. For real $\tau$ near zero,
Proposition~\ref{prop:physicalFrame} and real ODE uniqueness give
\begin{equation}
 N(U(\tau))=N_{x+\tau r},
 \qquad
 \omega(U(\tau))=w_{x+\tau r}^{1/2}.
 \label{eq:jetIdentification}
\end{equation}
Indeed, \eqref{eq:sinitial} implies
$1+\tau s_i(0)>0$ for real $\abs\tau<2\rho_p<1/4$, so this real
segment remains in $\operatorname{int}K$ and the physical frame is
defined there. The local identification near zero gives
\[
 \left.\frac{\dd^k}{\dd\tau^k}N(U(\tau))\right|_{\tau=0}
 =D^kN_x[r,\ldots,r],
 \qquad
 \left.\frac{\dd^k}{\dd\tau^k}\omega(U(\tau))\right|_{\tau=0}
 =D^k\omega_x[r,\ldots,r].
\]

Since \eqref{eq:Uball} places the circle
$\abs\tau=\rho_p$ in the resolvent neighborhood,
Lemma~\ref{lem:resolvent} gives
\[
 \sup_{\abs\tau\le\rho_p}
 \left(\norm{N(U(\tau))-N(U_0)}_{\op}
 +\norm{N(U(\tau))-N(U_0)}_{\mathrm{row},0}\right)
 \le Ca^{-3}(c_1a/2)\le Cp^2.
\]
For $k\ge1$, subtracting $N(U_0)$ does not change the derivative.
Cauchy's formula on the circle $\abs\tau=\rho_p$, strictly inside
$\abs\tau<2\rho_p$, in the matrix Banach norm
$\norm{M}_{\op}+\norm{W_x^{-1/2}M}_{2\to\infty}$ gives the following
estimate; here its second summand is exactly
$\norm{M}_{\mathrm{row},0}$:
\[
 \norm{\frac{\dd^k}{\dd\tau^k}N(U(\tau))\big|_{\tau=0}}_{\op}
 +\norm{W_x^{-1/2}
 \frac{\dd^k}{\dd\tau^k}N(U(\tau))\big|_{\tau=0}}_{2\to\infty}
 \le Cp^2k!\rho_p^{-k},
\]
which proves \eqref{eq:diagNjet}. Equation \eqref{eq:omegaLip} and
\eqref{eq:Uball} give
$\sup_{\abs\tau\le\rho_p}
\norm{\omega(U(\tau))-\omega(U_0)}_2\le Ca\le C$.
The same Cauchy argument proves \eqref{eq:diagomegajet} without a
prefactor depending on $p$.

It remains to pass from diagonal to mixed directions. The maps
$D^kN_x$ and $D^k\omega_x$ are symmetric $k$-linear maps into their
respective Banach spaces. For unit vectors $r_1,\ldots,r_k$, the real
polarization identity and the triangle inequality give
\[
 \norm{T[r_1,\ldots,r_k]}
 \le\frac1{2^kk!}\sum_{\epsilon\in\{-1,1\}^k}
 \norm{T[(\epsilon_1r_1+\cdots+\epsilon_kr_k)^k]}
 \le\frac{k^k}{k!}
 \sup_{\norm{r}\le1}\norm{T[r^k]}.
\]
Scaling each input separately gives the same bound multiplied by
$\prod_j\norm{r_j}$. Combining this with the diagonal $k!$ bounds
and undoing the affine normalization proves
\eqref{eq:mixedNjet} and \eqref{eq:mixedomegajet}.
\end{proof}

\section{Projection-weighted Wick contractions}\label{sec:wick}

Kook \cite[Section~3.3]{Kook26} estimated the first Wiener-chaos
contractions case by case. We isolate their Parseval-frame structure
in an all-order theorem whose double-contraction bound removes the
residual $d^{1/2}$ loss. Throughout this section all tensors are real,
and $\Tr$ denotes the unnormalized trace
\[
 (\Tr^rT)_{a_1\ldots a_{n-2r}}
 =\sum_{b_1,\ldots,b_r=1}^d
 T_{b_1b_1\ldots b_rb_ra_1\ldots a_{n-2r}}.
\]
For tensors $S,T$, the contraction $S\otimes_\ell T$ sums the last
$\ell$ indices of each tensor, and
$S\widetilde\otimes_\ell T$ is its average over all permutations of
the remaining indices. Thus symmetrization is an orthogonal
projection in Hilbert--Schmidt norm and introduces no additional
combinatorial factor. Let $I_n(T)$ denote the order-$n$ Wick
polynomial, normalized by
\begin{equation}
 \E[I_n(S)I_m(T)]=\mathbf 1_{\{m=n\}}n!\ip{S}{T}_{\HS}.
 \label{eq:WickIsometry}
\end{equation}
The raw-to-Wick and product formulas
\cite[Chapter~1]{Nualart06} are
\begin{align}
 T[h^{\otimes n}]
 &=\sum_{r=0}^{\lfloor n/2\rfloor}
 a_{n,r}I_{n-2r}(\Tr^rT),
 \qquad a_{n,r}=\frac{n!}{2^rr!(n-2r)!},
 \label{eq:rawWick}\\
 I_a(S)I_b(T)
 &=\sum_{\ell=0}^{a\wedge b}
 \ell!\binom a\ell\binom b\ell
 I_{a+b-2\ell}(S\widetilde\otimes_\ell T),
 \label{eq:WickProduct}
\end{align}
where $\widetilde\otimes_\ell$ is the symmetrized $\ell$-fold contraction. Appendix~\ref{app:wick} records these identities.
We use the conventions that $I_0(c)=c$, a contraction whose order
exceeds one of the tensor orders is absent, and the Hilbert--Schmidt
norm of a scalar is its absolute value.  These conventions include
without exception the cases in which a trace leaves a tensor of
order zero or one.

Let $U\in\R^{m\times d}$ satisfy $U^\transpose U=\Id_d$ and assume
that every row is nonzero. Write the column representation of its
$i$th row as
\begin{equation}
 u_i=\sqrt{w_i}\,k_i,\qquad w_i=\norm{u_i}^2>0,
 \qquad \norm{k_i}=1,
 \qquad \sum_iw_i=d,
 \label{eq:parsevalrows}
\end{equation}
and set $K_i=k_i\otimes k_i$. Define
\begin{equation}
 \Gamma_{ij}=\sqrt{w_iw_j}\ip{K_i}{K_j}_{\HS}
 =\frac{P_{ij}^2}{\sqrt{w_iw_j}},
 \qquad P=UU^\transpose.
 \label{eq:Gamma}
\end{equation}
The Schur-product theorem \cite[Section~5.2]{HornJohnson13} gives
\begin{equation}
 0\preceq\Gamma\preceq\Id_m.
 \label{eq:GammaContraction}
\end{equation}
Indeed, $P\preceq\Id$ implies $P\had(\Id-P)\succeq0$, which is $W-P^{\had2}\succeq0$; conjugation by $W^{-1/2}$ proves \eqref{eq:GammaContraction}.

\begin{theorem}[Weighted Wick contraction]\label{thm:weightedWick}
Let $n\ge1$ and let $T_i\in\operatorname{Sym}^n(\R^d)$, $1\le i\le m$. Suppose that, for some $L\ge0$,
\begin{align}
 \left(\sum_i\abs{T_i[x^{\otimes n}]}^2\right)^{1/2}
 &\le L\norm{x}^n,
 \label{eq:WW1}\\
 \abs{T_i[x_1,\ldots,x_n]}
 &\le L\sqrt{w_i}\prod_{j=1}^n\norm{x_j},
 \label{eq:WW2}\\
 \abs{\sum_i\sqrt{w_i}T_i[x^{\otimes n}]}
 &\le L\norm{x}^n
 \label{eq:WW3}
\end{align}
for all indicated vectors. If $h\sim N(0,\Id_d)$, then
\begin{equation}
 \left\Vert
 \sum_i\sqrt{w_i}(k_i^\transpose h)^2T_i[h^{\otimes n}]
 \right\Vert_{L^2}
 \le (Cn)^{Cn}L(d+n)^{n/2}.
 \label{eq:WWconclusion}
\end{equation}
\end{theorem}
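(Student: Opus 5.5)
We expand $X:=\sum_i\sqrt{w_i}(k_i^\transpose h)^2T_i[h^{\otimes n}]$ into Wiener chaoses and bound each chaos component in Hilbert--Schmidt norm. Write $(k_i^\transpose h)^2=I_2(K_i)+1$. By \eqref{eq:rawWick}, $T_i[h^{\otimes n}]=\sum_r a_{n,r}I_{n-2r}(\Tr^rT_i)$. The product formula \eqref{eq:WickProduct} then gives
\[
 X=\sum_{r}a_{n,r}\sum_i\sqrt{w_i}\Bigl(I_{n-2r}(\Tr^rT_i)+\sum_{\ell=0}^{2\wedge(n-2r)}c_\ell\,I_{n-2r+2-2\ell}\bigl(K_i\widetilde\otimes_\ell\Tr^rT_i\bigr)\Bigr),
\]
with $c_\ell\le 2$. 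There are at most $O(n)$ chaos orders and $O(n^2)$ terms, and all coefficients $a_{n,r}$ are at most $(Cn)^{Cn}$. By orthogonality \eqref{eq:WickIsometry} and the triangle inequality, it suffices to bound $\norm{\sum_i\sqrt{w_i}\,S_i}_{\HS}$ by $(Cn)^{Cn}L(d+n)^{n/2}$ for each of the four families $S_i\in\{\Tr^rT_i,\ K_i\widetilde\otimes_\ell\Tr^rT_i:\ell=0,1,2\}$. Each such tensor has order $q\le n+2$, so the Wick norm contributes only $\sqrt{q!}\le(Cn)^{Cn}$. Symmetrization is an HS contraction, so we may drop the tildes.

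The first step is to convert the hypotheses into HS bounds on traces. From \eqref{eq:WW1} and polarization, $\sum_i|T_i[x_1,\dots,x_n]|^2\le (n^n/n!)^2L^2$ for unit $x_j$. Hence for any orthonormal basis and fixed free indices, $\sum_i\norm{\Tr^rT_i}_{\HS}^2$ is bounded by a sum over $d^{r}\cdot d^{r}$ pairs of traced indices times $d^{n-2r}$ free indices. This is too crude, so we instead treat traces as averages of Gaussian evaluations: $\Tr^rT=\E_g T[g^{\otimes 2r},\cdot]$, which yields $\norm{\Tr^rT}\lesssim d^r\cdot(\text{operator-type bound})$. The target scaling is: an order-$q$ tensor built from $n$ slots with $r$ traces has squared HS norm $\lesssim d^{2r}\cdot d^{n-2r}$ when measured against an $\ell^2$-family bound like \eqref{eq:WW1}. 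That gives the natural $d^{n/2}$ overall.

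The second step handles the zero contraction $\ell=0$. We compute $\norm{\sum_i\sqrt{w_i}K_i\otimes S_i}_{\HS}^2=\sum_{i,j}\Gamma_{ij}\ip{S_i}{S_j}$, and by \eqref{eq:GammaContraction} this is at most $\sum_i\norm{S_i}_{\HS}^2$, which is controlled via \eqref{eq:WW1} after tracing. The one contraction $\ell=1$ gives $\sum_i\sqrt{w_i}\,k_i\otimes S_i[k_i,\cdot]$. Using $\sum_iu_iu_i^\transpose=\Id$, i.e.\ $P\preceq\Id$, its squared norm is $\sum_{ij}P_{ij}\ip{S_i[k_i]}{S_j[k_j]}$ up to the factors $\sqrt{w}$, so it is again bounded by $\sum_i\norm{S_i[k_i,\cdot]}^2\le\sum_i\norm{S_i}_{\HS}^2$. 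The term $\ell=0$ with no $K_i$ at all (the ``$+1$'') is $\sum_i\sqrt{w_i}\Tr^rT_i$, handled by \eqref{eq:WW3} applied at each traced level: the tensor $\sum_i\sqrt{w_i}T_i$ has diagonal bound $L$, so by polarization and tracing its order-$(n-2r)$ trace has HS norm at most $(Cn)^{Cn}L\,d^{r}d^{(n-2r)/2}$.

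The main obstacle is the double contraction $\ell=2$, namely $\sum_i\sqrt{w_i}\,\Tr^rT_i[k_i,k_i,\cdot]$, of order $n-2r-2$. A naive estimate via \eqref{eq:WW1} loses a factor $d^{1/2}$. Here we use \eqref{eq:WW2}: each $T_i[k_i,k_i,\cdot]$ has injective norm at most $L\sqrt{w_i}$. We bound the HS norm of the sum by duality. Pair it with a unit tensor $Y$ of order $n-2r-2$ and write the result as $\sum_i w_i\cdot w_i^{-1/2}\Tr^rT_i[k_i,k_i,Y]$. Each summand is at most $L\,d^{r}\,\norm{\cdot}$ of $Y$ in an injective sense, and the total is controlled by $\sum_iw_i=d$. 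The HS-versus-injective gap on $Y$ costs $d^{(n-2r-2)/2}$. The total is therefore $L\,d^{r+1}d^{(n-2r-2)/2}=Ld^{n/2}$, as advertised in the introduction. Making the trace step rigorous inside this duality will be done by writing $\Tr^r$ as a Gaussian expectation and applying \eqref{eq:WW2} pointwise, with $\E\norm{g}^{2r}\le(d+2r)^r$; this produces the $(d+n)$ in the final bound. Summing all pieces with the $(Cn)^{Cn}$ combinatorial factors gives \eqref{eq:WWconclusion}.
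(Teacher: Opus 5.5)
Your proposal is correct and follows the paper's proof. It uses the same split $(k_i^\transpose h)^2=1+I_2(K_i)$ and the same three contractions, controlled respectively by $\Gamma\preceq\Id_m$, by $P\preceq\Id_m$, and by \eqref{eq:WW2} with $\sum_iw_i=d$, which gives $Ld^{r+1}d^{(n-2r-2)/2}=Ld^{n/2}$ for the double contraction.

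The differences are auxiliary and harmless:
\begin{itemize}
\item The paper gets $\sum_i\norm{\Tr^rT_i}_{\HS}^2\le L^2(C(d+n))^n$ in one step from the orthogonality identity $\E\norm{\mathbf T(h)}_2^2=\sum_ra_{n,r}^2m_r!\sum_i\norm{\Tr^rT_i}_{\HS}^2$ together with \eqref{eq:WW1}. You instead polarize and count indices. Your first count, which you called ``too crude'', already yields exactly $d^{2r}\cdot d^{n-2r}$, so the Gaussian-averaging detour is unnecessary.
\item The paper bounds the non-centered part pathwise using \eqref{eq:WW3} and the radial moment, without any chaos decomposition; your chaos-by-chaos treatment of it also works.
\item Your product coefficients are really $2m_r$ and $m_r(m_r-1)$, not $\le2$; this is absorbed into $(Cn)^{Cn}$.
\end{itemize}
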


\begin{proof}
Set
\[
 \cP(h)=\sum_i\sqrt{w_i}(k_i^\transpose h)^2T_i[h^{\otimes n}].
\]
Since $\norm{k_i}=1$,
\begin{equation}
 (k_i^\transpose h)^2=1+I_2(K_i).
 \label{eq:quadWick}
\end{equation}
The noncentered part
\[
 \cP_0(h)=\sum_i\sqrt{w_i}T_i[h^{\otimes n}]
\]
satisfies, by \eqref{eq:WW3} and the Gaussian radial moment bound,
\begin{equation}
 \norm{\cP_0}_{L^2}
 \le L(\E\norm{h}^{2n})^{1/2}
 \le (Cn)^{n/2}L(d+n)^{n/2}.
 \label{eq:P0bound}
\end{equation}

For the centered part, put $S_{i,r}=\Tr^rT_i$ and $m_r=n-2r$. Then
\begin{equation}
 \cP_c
 =\sum_{r=0}^{\lfloor n/2\rfloor}a_{n,r}
 \sum_i\sqrt{w_i}I_2(K_i)I_{m_r}(S_{i,r}).
 \label{eq:Pcraw}
\end{equation}
Only the contractions $\ell=0,1,2$ occur. The $\ell=1$ term is absent
when $m_r=0$, and the $\ell=2$ term is absent when $m_r<2$.
Thus, when $n=1$, only $\ell=0,1$ occur, while for $n=2$ the
$\ell=2$ contraction is a scalar.  The estimates below use the same
formulas in these edge cases.

For $\ell=0$, an unsymmetrized kernel is
\[
 A_{r,0}=\sum_i\sqrt{w_i}K_i\otimes S_{i,r}.
\]
Using \eqref{eq:GammaContraction} coordinate by coordinate,
\begin{equation}
 \norm{A_{r,0}}_{\HS}^2
 =\sum_{i,j}\Gamma_{ij}\ip{S_{i,r}}{S_{j,r}}_{\HS}
 \le\sum_i\norm{S_{i,r}}_{\HS}^2.
 \label{eq:l0}
\end{equation}
Symmetrization cannot increase the Hilbert--Schmidt norm.

For $\ell=1$, an unsymmetrized representative is
\[
 A_{r,1}=\sum_i u_i\otimes S_{i,r}(k_i,\cdot).
\]
For a multi-index
$\gamma=(a_1,\ldots,a_{m_r-1})$, put
\[
 c_{i,\gamma}
 =\sum_{b=1}^d(k_i)_b(S_{i,r})_{b,a_1,\ldots,a_{m_r-1}}.
\]
Then $(A_{r,1})_{a_0,\gamma}=\sum_i(u_i)_{a_0}c_{i,\gamma}$.
For each fixed $\gamma$, the Parseval identity for the rows of $U$
gives
\[
 \sum_{a_0=1}^d
 \left|\sum_i(u_i)_{a_0}c_{i,\gamma}\right|^2
 =c_\gamma^\transpose Pc_\gamma
 \le\sum_i|c_{i,\gamma}|^2.
\]
Summing over $\gamma$ and using $\norm{k_i}=1$ gives
\begin{equation}
 \norm{A_{r,1}}_{\HS}^2
 \le\sum_i\norm{S_{i,r}(k_i,\cdot)}_{\HS}^2
 \le\sum_i\norm{S_{i,r}}_{\HS}^2.
 \label{eq:l1}
\end{equation}
The same bound holds after symmetrization.

For $\ell=2$, which is present only when $m_r\ge2$, the kernel is
\[
 A_{r,2}=\sum_i\sqrt{w_i}S_{i,r}(k_i,k_i,\cdot).
\]
For a fixed output multi-index $(a_1,\ldots,a_{m_r-2})$, expanding the $r$ traces and applying \eqref{eq:WW2} gives
\begin{align*}
 &\abs{S_{i,r}(k_i,k_i,e_{a_1},\ldots,e_{a_{m_r-2}})}\\
 &\quad\le
 \sum_{b_1,\ldots,b_r=1}^d
 \abs{T_i[e_{b_1},e_{b_1},\ldots,e_{b_r},e_{b_r},
 k_i,k_i,e_{a_1},\ldots]}
 \le Ld^r\sqrt{w_i}.
\end{align*}
Thus each output coordinate of $A_{r,2}$ is at most
\begin{equation}
 Ld^r\sum_iw_i=Ld^{r+1}.
 \label{eq:l2coord}
\end{equation}
There are $d^{m_r-2}$ output coordinates, and hence
\begin{equation}
 \norm{A_{r,2}}_{\HS}
 \le Ld^{r+1}d^{(m_r-2)/2}=Ld^{n/2},
 \label{eq:l2}
\end{equation}
because
\begin{equation}
 r+1+\frac{n-2r-2}{2}=\frac n2.
 \label{eq:dimensionidentity}
\end{equation}

It remains to track the coefficients. Regard
$\mathbf T(h)=(T_i[h^{\otimes n}])_{i=1}^m$ as an
$\R^m$-valued polynomial. The coordinatewise raw-to-Wick identity is
\[
 \mathbf T(h)
 =\sum_{r=0}^{\lfloor n/2\rfloor}
 a_{n,r}\bigl(I_{m_r}(S_{i,r})\bigr)_{i=1}^m.
\]
The chaos orders $m_r$ are pairwise distinct. Orthogonality and
\eqref{eq:WickIsometry} therefore give the exact identity
\begin{equation}
 \E\norm{\mathbf T(h)}_2^2
 =\sum_{r=0}^{\lfloor n/2\rfloor}
 a_{n,r}^2m_r!\sum_i\norm{S_{i,r}}_{\HS}^2.
 \label{eq:tensorenergyExact}
\end{equation}
On the other hand, \eqref{eq:WW1} and the Gaussian radial-moment
bound imply
\begin{equation}
 \sum_{r=0}^{\lfloor n/2\rfloor}
 a_{n,r}^2m_r!\sum_i\norm{S_{i,r}}_{\HS}^2
 \le L^2(C(d+n))^n.
 \label{eq:tensorenergy}
\end{equation}
This is the only point in the proof at which orthogonality between
different values of $r$ is used.  After multiplication by
$I_2(K_i)$, the output order is
\begin{equation}
 q(r,\ell)=m_r+2-2\ell=n-2r+2-2\ell,
 \label{eq:outputChaosOrder}
\end{equation}
and different pairs $(r,\ell)$ can have the same value of
$q(r,\ell)$.  We therefore make no orthogonality assertion for the
terms in \eqref{eq:Pcraw}; from this point onward they are combined
by the triangle inequality.

For $\ell=0$, the $L^2$ norm of the contribution for a fixed $r$ is at most
\[
 a_{n,r}\sqrt{(m_r+2)!}
 \left(\sum_i\norm{S_{i,r}}_{\HS}^2\right)^{1/2}
 \le (n+2)
 \left(a_{n,r}^2m_r!\sum_i\norm{S_{i,r}}_{\HS}^2\right)^{1/2}.
\]
For $\ell=1$, the product coefficient is $2m_r$, the output order is $m_r$, and the corresponding bound is at most
\[
 2m_r
 \left(a_{n,r}^2m_r!\sum_i\norm{S_{i,r}}_{\HS}^2\right)^{1/2}.
\]
Taking the triangle inequality first, and then Cauchy--Schwarz in
$r$, the combined $\ell=0,1$ contribution is at most
\[
 3(n+2)\sqrt{1+\lfloor n/2\rfloor}\,
 \left(
 \sum_{r=0}^{\lfloor n/2\rfloor}
 a_{n,r}^2m_r!\sum_i\norm{S_{i,r}}_{\HS}^2
 \right)^{1/2}.
\]
Here the absent $\ell=1$ term for $m_r=0$ is simply omitted.  By
\eqref{eq:tensorenergy}, the last display is bounded by
$(Cn)^{Cn}L(d+n)^{n/2}$.

For $\ell=2$, the product coefficient is $m_r(m_r-1)$ and the output order is $m_r-2$. Equations \eqref{eq:l2} and the elementary estimates
\[
 a_{n,r}\le(Cn)^n,
 \qquad m_r(m_r-1)\le n^2,
 \qquad \sqrt{(m_r-2)!}\le(Cn)^{n/2}
\]
show that each such contribution is at most $(Cn)^{Cn}Ld^{n/2}$. There are at most $1+\lfloor n/2\rfloor$ values of $r$. The triangle inequality now proves the same bound for $\cP_c$. Combining it with \eqref{eq:P0bound} proves \eqref{eq:WWconclusion}.
\end{proof}

\begin{remark}
Condition \eqref{eq:WW1} controls the global tensor energy,
\eqref{eq:WW2} controls each row relative to its leverage weight, and
\eqref{eq:WW3} supplies the weighted mean cancellation. Thus the
theorem is tailored to Gaussian polynomials indexed by a Parseval
frame, rather than to arbitrary weighted families.

The double contraction is the only point at which the number of rows could enter. The factor $\sqrt{w_i}$ in \eqref{eq:WW2} turns the row sum into $\sum_iw_i=d$, and \eqref{eq:dimensionidentity} exactly compensates for the two contracted Gaussian indices.
\end{remark}

\section{Base-point bottleneck polynomials}\label{sec:base}

This section combines Theorems~\ref{thm:jets} and
\ref{thm:weightedWick} to control the base-point terms $H_j(0)$ in
the selective expansion \eqref{eq:selective-exact}.

Return to the normalization \eqref{eq:normalization}. Write
\begin{equation}
 U=W^\beta A_x,
 \qquad U^\transpose U=\Id_d,
 \qquad B=W^{1/2}A_x=W^{1/p}U,
 \qquad \norm{B}_{\op}\le1.
 \label{eq:baseUB}
\end{equation}
Let $u_i=\sqrt{w_i}k_i$ be the rows of $U$. For $h\sim N(0,\Id_d)$,
\begin{equation}
 q_i=\sqrt{w_i}(k_i^\transpose h)^2,
 \qquad v=Bh.
 \label{eq:baseqv}
\end{equation}

\begin{lemma}[Cubic base term]\label{lem:cubic}
There is a universal $C>0$ such that
\begin{equation}
 \norm{E_1(0)+\beta H_1(0)}_{L^2}
 \le Cp^2\sqrt d.
 \label{eq:cubicL2}
\end{equation}
\end{lemma}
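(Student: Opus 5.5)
The plan is to write both $E_1(0)$ and $H_1(0)$ as instances of the polynomial $\cP(h)=\sum_i\sqrt{w_i}(k_i^\transpose h)^2T_i[h]$ of Theorem~\ref{thm:weightedWick} with $n=1$, and to bound them separately. The triangle inequality then gives \eqref{eq:cubicL2}, using $0<\beta<1/2$.

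Under \eqref{eq:normalization} and \eqref{eq:baseUB}, we have $s=A_xh=W^{-\beta}Uh$. Hence
\[
 s_i=w_i^{1/p}k_i^\transpose h,\qquad v_i=w_i^{1/2+1/p}k_i^\transpose h,\qquad q_i=\sqrt{w_i}(k_i^\transpose h)^2 .
\]

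\emph{The term $H_1(0)$.} Here $H_1(0)=q^\transpose N_xv=\sum_i\sqrt{w_i}(k_i^\transpose h)^2(N_xBh)_i$, so $T_i[x]=(N_xBx)_i$. I will verify the three hypotheses with $L=Cp^2$.
\begin{itemize}
\item[(i)] For \eqref{eq:WW1}: $\norm{N_xBx}\le p\norm{x}$, by \eqref{eq:basicLewis} and $\norm{B}_{\op}\le1$.
\item[(ii)] For \eqref{eq:WW3}: $N_x$ is symmetric and $N_x\omega_x=0$, by Lemma~\ref{lem:kernel} at $\tau=0$. Therefore $\sum_i\sqrt{w_i}T_i[x]=\omega_x^\transpose N_xBx=0$, and this hypothesis holds with any $L$.
\item[(iii)] For \eqref{eq:WW2}: this is the delicate step. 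By Remark~\ref{rem:Nrowfalse}, the rows of $N_x$ are not uniformly controlled, so I will not bound a row norm of $N_x$. Instead I will apply \eqref{eq:Nadm2} at $U=U_0$ to $y=Bx$. Since $\norm{W^{-1/2}Bx}_\infty=\max_iw_i^{1/p}\abs{k_i^\transpose x}\le\norm{x}$ and $\norm{Bx}_2\le\norm{x}$, this gives $\abs{(N_xBx)_i}\le C(a^{-1}+a^{-2})\sqrt{w_i}\norm{x}\le Cp^2\sqrt{w_i}\norm{x}$.
\end{itemize}
Theorem~\ref{thm:weightedWick} with $n=1$ then yields $\norm{H_1(0)}_{L^2}\le Cp^2\sqrt{d+1}$.

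\emph{The term $E_1(0)$.} Here $E_1(0)=\sum_iw_i^\alpha s_i^3=\sum_i\sqrt{w_i}(k_i^\transpose h)^2\,w_i^{1/2+1/p}k_i^\transpose h$, so $T_i[x]=w_i^{1/2+1/p}k_i^\transpose x$.
\begin{itemize}
\item Hypothesis \eqref{eq:WW1} holds with $L=1$, because $\sum_iw_i^{1+2/p}(k_i^\transpose x)^2\le\sum_iw_i(k_i^\transpose x)^2=\norm{Ux}^2=\norm{x}^2$.
\item Hypothesis \eqref{eq:WW2} holds with $L=1$, because $w_i\le1$.
\item Hypothesis \eqref{eq:WW3} is where I expect the only obstacle. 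The mean cancellation fails, and a direct bound gives only $L=\sqrt d$. That would produce the wrong order $d$ if the theorem were applied as a black box.
\end{itemize}
To avoid this, I will rerun the proof of Theorem~\ref{thm:weightedWick} and observe that its centered part $\cP_c$ uses only \eqref{eq:WW1} and \eqref{eq:WW2}. This gives $\norm{\cP_c}_{L^2}\le C\sqrt{d+1}$. The noncentered part $\cP_0(h)=\sum_iw_i^{1+1/p}k_i^\transpose h=c^\transpose Uh$, with $c_i=w_i^{1/2+1/p}$, is a centered Gaussian linear form. Its $L^2$ norm is $\norm{U^\transpose c}\le\norm{c}\le(\sum_iw_i)^{1/2}=\sqrt d$. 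So the $\sqrt d$ loss that would enter through \eqref{eq:WW3} appears only at first order and costs exactly the target scale. Alternatively, since $E_1(0)$ is an explicit cubic form, I can check it directly using the Hermite decomposition $(k_i^\transpose h)^3=I_3(k_i^{\otimes3})+3k_i^\transpose h$ together with \eqref{eq:GammaContraction}.

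Combining the two bounds gives $\norm{E_1(0)+\beta H_1(0)}_{L^2}\le C\sqrt d+Cp^2\sqrt d$, which is \eqref{eq:cubicL2}. The main step to check carefully is (iii): the row estimate for $N_xBx$ through the admissible-norm bound \eqref{eq:Nadm2}.
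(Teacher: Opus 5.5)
Your argument is correct, but it is organized differently from the paper's. The paper does not split the two terms and does not use Theorem~\ref{thm:weightedWick}. It writes $E_1(0)+\beta H_1(0)=-\ip{Mh}{u^{\had2}}$ with $u=Uh$ and $M=-W^{-1/2}(\Id+\beta N)W^{1/p}U$. It then applies the Gaussian Poincar\'e inequality to this odd cubic and bounds $\E\norm{\nabla T}^2$ using only $P^{\had2}\preceq W$, $P\preceq\Id$, $0\preceq N\preceq p\Id$, $W\preceq\Id$ and $\norm{U}_{\mathrm F}^2=d$. That argument actually yields $Cp\sqrt d$.

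Your route presents the cubic term as the $n=1$ instance of the Wick mechanism used for every $H_j(0)$, which is conceptually tidy. However, it imports more than is needed:
\begin{itemize}
\item[(a)] At $n=1$ the $\ell=2$ contraction is absent, so \eqref{eq:WW2} is never used inside the proof of Theorem~\ref{thm:weightedWick}. Your step (iii) via \eqref{eq:Nadm2} is valid, but it is needed only to quote the theorem as stated.
\item[(b)] Your observation for $E_1(0)$ applies equally to $H_1(0)$. There, \eqref{eq:WW3} serves only to bound $\cP_0$, which at $n=1$ is a Gaussian linear form whose $L^2$ norm can be computed exactly.
\end{itemize}
So you could treat the combined tensors $T_i[x]=((\Id+\beta N)Bx)_i$ at once. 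The centered part then costs $C(1+\beta p)\sqrt{d+1}$ by \eqref{eq:WW1} alone. The first-order part $\cP_0=\omega^\transpose(\Id+\beta N)Bh$ has $L^2$ norm $\norm{B^\transpose(\Id+\beta N)\omega}\le(1+\beta p)\sqrt d$, with no cancellation needed. This recovers the paper's $Cp\sqrt d$ without the kernel identity or the resolvent estimates of Section~\ref{sec:analytic}.
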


\begin{proof}
Let $u=Uh$ and define
\begin{equation}
 M=-W^{-1/2}(\Id+\beta N)W^{1/p}U.
 \label{eq:Mdef}
\end{equation}
Since $W^{-\beta}=W^{-1/2}W^{1/p}$, substitution into \eqref{eq:Fprime} gives
\begin{equation}
 E_1(0)+\beta H_1(0)=-\ip{Mh}{u^{\had2}}=:-T(h).
 \label{eq:Tcubic}
\end{equation}
The polynomial $T$ is odd and centered. The Gaussian Poincar\'e
inequality \cite{BrascampLieb76,Ledoux01} gives
\begin{equation}
 \E T^2\le\E\norm{\nabla T(h)}^2
 \le2\E\norm{M^\transpose u^{\had2}}^2
 +8\E\norm{U^\transpose(u\had Mh)}^2.
 \label{eq:PoincareT}
\end{equation}
If $m_i^\transpose$ is the $i$th row of $M$, then
\begin{equation}
 \E\norm{U^\transpose(u\had Mh)}^2
 \le\sum_i\E[u_i^2(m_i^\transpose h)^2]
 \le3\sum_iw_i\norm{m_i}^2
 =3\tr(M^\transpose WM).
 \label{eq:secondcub}
\end{equation}
Let $C_M=MM^\transpose$. Since $u$ is Gaussian with covariance $P=UU^\transpose$,
\begin{align}
 \E\norm{M^\transpose u^{\had2}}^2
 &=w^\transpose C_Mw+2\tr(C_MP^{\had2})\notag\\
 &\le\norm{M^\transpose w}^2+2\tr(M^\transpose WM),
 \label{eq:firstcub}
\end{align}
where $P^{\had2}\preceq W$ was used.

Now
\[
 W^{1/2}M=-(\Id+\beta N)W^{1/p}U.
\]
Using $0\preceq N\preceq p\Id$, $W^{1/p}\preceq\Id$ and $\norm{U}_{\mathrm F}^2=d$,
\begin{equation}
 \tr(M^\transpose WM)\le Cp^2d.
 \label{eq:Mtrace}
\end{equation}
Also, using $P\preceq\Id$,
\begin{align}
 \norm{M^\transpose w}^2
 &=w^\transpose W^{-1/2}(\Id+\beta N)W^{1/p}
 P W^{1/p}(\Id+\beta N)W^{-1/2}w\notag\\
 &\le Cp^2w^\transpose W^{-1}w
 =Cp^2\sum_iw_i=Cp^2d.
 \label{eq:Mw}
\end{align}
Combining \eqref{eq:PoincareT}--\eqref{eq:Mw} proves \eqref{eq:cubicL2}.
\end{proof}

For $j\ge2$, define the symmetric $j$-tensor
\begin{equation}
 \cT_i^{(j)}[x_1,\ldots,x_j]
 =\frac1j\sum_{r=1}^j
 e_i^\transpose D^{j-1}N_x
 [x_1,\ldots,\widehat{x_r},\ldots,x_j]Bx_r.
 \label{eq:Tij}
\end{equation}
On the diagonal,
\begin{equation}
 \cT_i^{(j)}[h^{\otimes j}]
 =e_i^\transpose D^{j-1}N_x[h^{j-1}]Bh.
 \label{eq:Tdiag}
\end{equation}

\begin{theorem}[Natural scale of every bottleneck polynomial]\label{thm:Hbase}
For every $j\ge2$,
\begin{equation}
 \norm{H_j(0)}_{L^2}
 \le p^2(Cj)^{Cj}\rho_p^{-(j-1)}(d+j)^{j/2}.
 \label{eq:Hbase}
\end{equation}
In particular,
\begin{equation}
 \norm{H_4(0)}_{L^2}\le p^{O(1)}d^2.
 \label{eq:H4base}
\end{equation}
\end{theorem}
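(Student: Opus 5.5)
We apply Theorem~\ref{thm:weightedWick} with $n=j$ and $T_i=\cT_i^{(j)}$, after writing $H_j(0)$ in its form. Under \eqref{eq:normalization}, the definitions \eqref{eq:pathvars} give $u_0=Uh$ and $q_0=W^{-1/2}(Uh)^{\had2}$. Hence $(q_0)_i=w_i(k_i^\transpose h)^2/\sqrt{w_i}=\sqrt{w_i}(k_i^\transpose h)^2$, which is \eqref{eq:baseqv}, and $v_0=Bh$. Therefore
\[
 H_j(0)=\sum_i\sqrt{w_i}(k_i^\transpose h)^2\,
 e_i^\transpose D^{j-1}N_x[h^{j-1}]Bh
 =\sum_i\sqrt{w_i}(k_i^\transpose h)^2\,\cT_i^{(j)}[h^{\otimes j}]
\]
by \eqref{eq:Tdiag}. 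The statement then reduces to verifying \eqref{eq:WW1}--\eqref{eq:WW3} with $L=Cp^2(Cj)^{Cj}\rho_p^{-(j-1)}$. Since $(Cj)^{Cj}$ absorbs $(j-1)^{j-1}$ and $j$, the conclusion \eqref{eq:WWconclusion} then gives \eqref{eq:Hbase}. The bound \eqref{eq:H4base} follows by taking $j=4$ and $\rho_p\asymp p^{-3}$, which gives $p^{11}d^2$ up to constants.

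\emph{Conditions \eqref{eq:WW1} and \eqref{eq:WW2}.} Both follow from Theorem~\ref{thm:jets} with $g_0(x)=\Id$ and $\norm{B}_{\op}\le1$. For \eqref{eq:WW1}, the vector $(\cT_i^{(j)}[x^{\otimes j}])_i=D^{j-1}N_x[x^{j-1}]Bx$ has Euclidean norm at most $\norm{D^{j-1}N_x[x^{j-1}]}_{\op}\norm{x}$. By \eqref{eq:diagNjet} this is at most $Cp^2(j-1)!\rho_p^{-(j-1)}\norm{x}^j$. For \eqref{eq:WW2}, each summand in \eqref{eq:Tij} satisfies
\[
 \abs{e_i^\transpose D^{j-1}N_x[\ldots]Bx_r}
 \le\sqrt{w_i}\,\norm{W^{-1/2}D^{j-1}N_x[\ldots]}_{2\to\infty}\norm{x_r}.
\]
The mixed bound \eqref{eq:mixedNjet} then gives $Cp^2(j-1)^{j-1}\rho_p^{-(j-1)}\sqrt{w_i}\prod\norm{x_\cdot}$, and averaging over $r$ preserves this.

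\emph{Condition \eqref{eq:WW3}, the main obstacle.} We need a bound on $\abs{\omega_x^\transpose D^{j-1}N_x[x^{j-1}]Bx}$ that does not carry a factor $\norm{\omega_x}=\sqrt d$. The plan is to use the kernel identity, Lemma~\ref{lem:kernel}, which gives $N_y\omega_y=0$ along the real path since $N$ and $\omega$ are identified with the physical quantities by \eqref{eq:jetIdentification}. Using symmetry of $N_y$, we write $\omega_y^\transpose N_y\equiv0$ along $y=x+\tau x$ and differentiate $j-1$ times at $\tau=0$ via Leibniz:
\[
 \omega_x^\transpose D^{j-1}N_x[x^{j-1}]
 =-\sum_{k=1}^{j-1}\binom{j-1}{k}\,
 (D^k\omega_x[x^k])^\transpose D^{j-1-k}N_x[x^{j-1-k}].
\]
Apply this to $Bx$. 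For $k<j-1$, bound the operator norm by \eqref{eq:diagNjet} and $\norm{D^k\omega_x[x^k]}_2$ by \eqref{eq:diagomegajet}. For $k=j-1$, the factor is $N_x$ itself with $\norm{N_x}_{\op}\le p$. Every term is then at most $Cp^2\,k!(j-1-k)!\binom{j-1}{k}\rho_p^{-(j-1)}\norm{x}^j$, up to a $j$-fold sum, so the total is $Cp^2 j\,(j-1)!\rho_p^{-(j-1)}\norm{x}^j$. The delicate points are whether the Leibniz expansion is justified with these norms and whether transposition of the complex-symmetric $N$ is harmless; both hold because everything is evaluated on the real segment, where $N_y$ is real symmetric.

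With the three conditions in hand, Theorem~\ref{thm:weightedWick} finishes the proof.
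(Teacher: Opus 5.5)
Your proposal is correct and follows the paper's proof essentially line by line. You use the same identification $H_j(0)=\sum_i\sqrt{w_i}(k_i^\transpose h)^2\cT_i^{(j)}[h^{\otimes j}]$, the same appeal to Theorem~\ref{thm:jets} for \eqref{eq:WW1}--\eqref{eq:WW2}, and the same Leibniz differentiation of the kernel identity with the cancellation $\binom{j-1}{k}k!(j-1-k)!=(j-1)!$ for \eqref{eq:WW3}. The only difference is cosmetic: you differentiate the row identity $\omega_y^\transpose N_y=0$ along the real segment, whereas the paper differentiates $N\omega=0$ and then uses symmetry of each $D^\ell N$; the two are equivalent (using $x$ for both the base point and the test direction in ``$y=x+\tau x$'' is only a notational clash).
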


\begin{proof}
We verify the hypotheses of Theorem~\ref{thm:weightedWick} with $n=j$. First, Theorem~\ref{thm:jets}, \eqref{eq:Tdiag} and $\norm{B}_{\op}\le1$ give
\begin{equation}
 \left(\sum_i\abs{\cT_i^{(j)}[h^{\otimes j}]}^2\right)^{1/2}
 \le Cp^2(j-1)!\rho_p^{-(j-1)}\norm{h}^j.
 \label{eq:TWW1}
\end{equation}
Second, the mixed row estimate \eqref{eq:mixedNjet} gives
\begin{equation}
 \abs{\cT_i^{(j)}[x_1,\ldots,x_j]}
 \le Cp^2j^j\rho_p^{-(j-1)}\sqrt{w_i}
 \prod_{\ell=1}^j\norm{x_\ell}.
 \label{eq:TWW2}
\end{equation}

For the weighted mean, let $\omega=w^{1/2}$. The identity $N\omega=0$ holds along the complex line by Lemma~\ref{lem:kernel}. Differentiating it $j-1$ times in direction $h$ gives
\begin{equation}
 D^{j-1}N[h^{j-1}]\omega
 =-\sum_{\ell=0}^{j-2}\binom{j-1}{\ell}
 D^\ell N[h^\ell]
 D^{j-1-\ell}\omega[h^{j-1-\ell}].
 \label{eq:kernelderivative}
\end{equation}
To make the cancellation explicit, put $k=j-1$.  Entrywise
differentiation of the identity $N_y^\transpose=N_y$ shows that
every matrix $D^\ell N[h^\ell]$ is symmetric at a real base point.
In particular,
\begin{equation}
 \omega^\transpose D^kN[h^k]Bh
 =\bigl(D^kN[h^k]\omega\bigr)^\transpose Bh.
 \label{eq:weightedMeanSymmetry}
\end{equation}
Substituting \eqref{eq:kernelderivative} in
\eqref{eq:weightedMeanSymmetry}, and using
$\norm{B}_{\op}\le1$, gives
\begin{align}
 &\abs{\sum_i\sqrt{w_i}\cT_i^{(j)}[h^{\otimes j}]}
 =\abs{\omega^\transpose D^{j-1}N[h^{j-1}]Bh}\notag\\
 &\quad\le\norm{h}
 \sum_{\ell=0}^{j-2}\binom{j-1}{\ell}
 \norm{D^\ell N[h^\ell]}_{\op}
 \norm{D^{j-1-\ell}\omega[h^{j-1-\ell}]}_2.
 \label{eq:weightedMeanLeibniz}
\end{align}
The factorial cancellation in this Leibniz sum is exact.  Namely,
for $1\le\ell\le k-1$, \eqref{eq:diagNjet} and
\eqref{eq:diagomegajet} give
\begin{align}
 &\binom{k}{\ell}
 \norm{D^\ell N[h^\ell]}_{\op}
 \norm{D^{k-\ell}\omega[h^{k-\ell}]}_2\notag\\
 &\qquad\le
 Cp^2\binom{k}{\ell}\ell!(k-\ell)!
 \rho_p^{-k}\norm{h}^k
 =Cp^2k!\rho_p^{-k}\norm{h}^k.
 \label{eq:weightedMeanFactorials}
\end{align}
For $\ell=0$, the same conclusion, with $p$ in place of $p^2$,
follows from $\norm N_{\op}\le p$ and
\eqref{eq:diagomegajet}.  The case $k=1$, equivalently $j=2$,
consists only of this $\ell=0$ term.  Multiplication by the remaining
factor $\norm h$ in \eqref{eq:weightedMeanLeibniz}, followed by
summing at most $k$ terms, therefore yields
\begin{equation}
 \abs{\sum_i\sqrt{w_i}\cT_i^{(j)}[h^{\otimes j}]}
 \le Cp^2j!\rho_p^{-(j-1)}\norm{h}^j
 \le p^2(Cj)^{Cj}\rho_p^{-(j-1)}\norm{h}^j.
 \label{eq:TWW3}
\end{equation}
Thus \eqref{eq:WW1}--\eqref{eq:WW3} hold with
$L=p^2(Cj)^{Cj}\rho_p^{-(j-1)}$. Finally,
\[
 H_j(0)=\sum_i\sqrt{w_i}(k_i^\transpose h)^2
 \cT_i^{(j)}[h^{\otimes j}].
\]
Theorem~\ref{thm:weightedWick} proves \eqref{eq:Hbase}. Equation \eqref{eq:H4base} follows because $\rho_p^{-3}=p^{O(1)}$.
\end{proof}

\section{Pathwise localization and logarithmic summation}\label{sec:path}

Building on the fixed-order localization in
\cite[Sections~3.1--3.2]{Kook26}, we combine the analytic jet bounds
with a single Gaussian event that is uniform over the path and the
logarithmic truncation order. We retain the normalization
\eqref{eq:normalization}.

\subsection{A uniform path event}

At the base point, let $k_i$ be the unit row directions in \eqref{eq:parsevalrows}. Then
\begin{equation}
 \zeta_{0,i}=k_i^\transpose h.
 \label{eq:rhoGaussian}
\end{equation}
For $\eps\in(0,1/4)$, define
\begin{equation}
 \Lambda_\eps=1+\log\frac{2(m+d+1)}{\eps}.
 \label{eq:Lambdaeps}
\end{equation}

\begin{lemma}[Initial Gaussian event]\label{lem:initialevent}
There is a universal $C>0$ such that the event
\begin{equation}
 \cE_0=
 \left\{
 \norm{h}\le C\sqrt{d\Lambda_\eps},
 \quad
 \norm{\zeta_0}_\infty\le C\sqrt{\Lambda_\eps}
 \right\}
 \label{eq:E0}
\end{equation}
has probability at least $1-\eps/8$.
\end{lemma}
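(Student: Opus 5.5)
The event $\cE_0$ is the intersection of two Gaussian events, so I will bound the two failure probabilities separately, each by $\eps/16$, and take a union bound. Both events only involve the standard Gaussian $h\sim N(0,\Id_d)$ and fixed unit vectors $k_i$. The constant $C$ can therefore be chosen universally, since $\Lambda_\eps\ge1+\log(2(m+d+1)/\eps)$ absorbs every dependence on $m$, $d$ and $\eps$.

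\emph{Norm bound.} The map $h\mapsto\norm{h}$ is $1$-Lipschitz, and $\E\norm{h}\le\sqrt d$. Gaussian concentration gives
\[
 \Pp\{\norm{h}\ge\sqrt d+t\}\le e^{-t^2/2}.
\]
Choose $t=\sqrt{2\log(16/\eps)}$. Since $\eps<1/4$ and $m,d\ge1$, we have $\log(16/\eps)\le C\Lambda_\eps$, and hence
\[
 \sqrt d+t\le\sqrt d+C\sqrt{\Lambda_\eps}\le C\sqrt{d\Lambda_\eps},
\]
using $d\ge1$ and $\Lambda_\eps\ge1$. So $\Pp\{\norm{h}>C\sqrt{d\Lambda_\eps}\}\le\eps/16$.

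\emph{Coordinate bound.} By \eqref{eq:parsevalrows} each $k_i$ is a unit vector, so $\zeta_{0,i}=k_i^\transpose h\sim N(0,1)$. The $\zeta_{0,i}$ are not independent, but no independence is needed: the union bound over the $m$ rows gives
\[
 \Pp\{\norm{\zeta_0}_\infty>s\}\le 2me^{-s^2/2}.
\]
Set $s^2=2\log(32m/\eps)$, which makes the right side $\eps/16$. Then $s^2\le 2\log(16(m+d+1)/\eps)\le C\Lambda_\eps$.

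Combining the two estimates, $\Pp(\cE_0^c)\le\eps/8$ after enlarging $C$ to the larger of the two constants. The only point needing care is that $\Lambda_\eps$ dominates $\log(1/\eps)$, $\log m$ and the constant $\log 32$ uniformly. This holds because $2(m+d+1)/\eps\ge 2\cdot3\cdot4=24$, so every $\log$ term is comparable to $\Lambda_\eps$ up to a universal factor. I do not expect any real obstacle here; the lemma is a routine tail estimate, and its role is to supply the logarithmic factors that reappear in \eqref{eq:reps}.
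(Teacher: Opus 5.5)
Your proposal is correct and is essentially the paper's proof. It uses Gaussian concentration for $\norm{h}$ at threshold $\sqrt d+\sqrt{2\log(16/\eps)}$ and a union bound over the $m$ standard normals $k_i^\transpose h$ at threshold $\sqrt{2\log(32m/\eps)}$, each failing with probability at most $\eps/16$.

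One harmless slip: $32m\le16(m+d+1)$ holds only when $m\le d+1$. Write instead $\log(32m/\eps)\le\log16+\log(2(m+d+1)/\eps)\le(1+\log16)\Lambda_\eps$, which gives the same conclusion.
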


\begin{proof}
Gaussian concentration \cite{Borell75,Ledoux01} gives
\[
 \Pp\!\left(
 \norm h>\sqrt d+\sqrt{2\log(16/\eps)}
 \right)\le\frac{\eps}{16}.
\]
Each $k_i^\transpose h$ is standard normal, so the normal tail bound
and a union bound give
\[
 \Pp\!\left(
 \max_{1\le i\le m}\abs{k_i^\transpose h}
 >\sqrt{2\log(32m/\eps)}
 \right)\le\frac{\eps}{16}.
\]
Both displayed thresholds are bounded by those in
\eqref{eq:E0} for a sufficiently large universal $C$, since
$\Lambda_\eps\ge1$. The union bound proves the claim.
\end{proof}

\begin{lemma}[Identification on a real proposal segment]
\label{lem:realAnalyticIdentification}
Work under \eqref{eq:normalization}, let $h\ne0$,
$\lambda=\norm{h}$, $e=h/\lambda$, and $\tau=t\lambda$. On
$\cE_0$, if
\[
 0\le t\le r/\sqrt d,
 \qquad
 r\le c\rho_p\Lambda_\eps^{-1/2},
\]
then, after decreasing the universal constant $c>0$,
\[
 0\le\tau<\min\{\rho_p,1/2\}.
\]
Every slack along $x+\tau e$ is positive, the physical horizontal
Lewis frame exists on this segment, and it is the restriction of the
holomorphic solution in Lemma~\ref{lem:analyticflow}. In particular,
\begin{equation}
 C^{-1}w_{x,i}\le w_{x+th,i}\le Cw_{x,i},
 \qquad 1\le i\le m.
 \label{eq:realPathWeightComparison}
\end{equation}
\end{lemma}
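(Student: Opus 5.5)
The plan is to reduce everything to the explicit slack formula \eqref{eq:sexplicit} along the unit direction $e$, and then invoke the identification machinery already built: Proposition~\ref{prop:physicalFrame} together with Lemma~\ref{lem:analyticflow}.

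\textbf{Step 1: smallness of $\tau$.} On $\cE_0$ we have $\lambda=\norm{h}\le C\sqrt{d\Lambda_\eps}$. Hence
\[
 0\le\tau=t\lambda\le \frac{r}{\sqrt d}\,C\sqrt{d\Lambda_\eps}
 =Cr\Lambda_\eps^{1/2}\le Cc\rho_p .
\]
Choosing $c$ small makes this less than $\min\{\rho_p,1/2\}$, using $\rho_p<1/8$ from the construction in Lemma~\ref{lem:analyticflow}.

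\textbf{Step 2: positivity of slacks.} By \eqref{eq:sinitial}, $\abs{s_i(0)}\le w_i^{1/p}\le1$ for the unit direction $e$. Therefore $1+\tau s_i(0)\ge 1-\tau>1/2$ for $0\le\tau<1/2$. Since the slack of row $i$ at $x+\tau e$ equals the base slack times $1+\tau s_i(0)$ (after normalization, $(Ax-b)_i(1+\tau s_i(0))$), every slack stays positive, and the segment $\{x+\tau' e:0\le\tau'\le\tau\}$ lies in $\operatorname{int}K$. Lemma~\ref{lem:LewisAnalytic} then gives an analytic full-rank path $\widehat B_{\tau'}$ on an open interval $I$ containing $[0,\tau]$ inside $\operatorname{int}K$. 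Proposition~\ref{prop:physicalFrame} supplies the physical horizontal frame on $I$.

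\textbf{Step 3: identification with the holomorphic solution.} The real restriction of the holomorphic solution of Lemma~\ref{lem:analyticflow} to $[0,\tau]\subset(-2\rho_p,2\rho_p)$ is real. The vector field $\cF$ is real on real inputs, and $U_0$ is real, so the solution is real by uniqueness: its complex conjugate solves the same equation with the same data. By \eqref{eq:Uball}, the restriction stays in the closed subball $\norm{U-U_0}_{\cX}\le c_1a/2$ of the row-map neighborhood. The converse clause of Proposition~\ref{prop:physicalFrame}, including its continuation across endpoints, identifies it with the physical frame on the whole connected interval $[0,\tau]$. The one point requiring care is that the neighborhood condition holds uniformly up to $\tau$; \eqref{eq:Uball} provides exactly this, so no bootstrapping is needed.

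\textbf{Step 4: weight comparison.} By \eqref{eq:physicalWeights}, $w_{x+th,i}=u_i(\tau)^\transpose u_i(\tau)$. Lemma~\ref{lem:rowmap} with $\delta\le c_1a/2\le c_0$ places this in the disk $\abs{z-w_{x,i}}\le w_{x,i}/2$. Being real, it lies in $[w_{x,i}/2,\,3w_{x,i}/2]$, which gives \eqref{eq:realPathWeightComparison} with $C=2$. The main obstacle is Step 3, the realness and continuation argument. It is handled entirely by earlier results, so the proof should be short.
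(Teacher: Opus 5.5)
Your proposal is correct and follows the paper's proof essentially step for step. You bound $\tau$ on $\cE_0$, get slack positivity from \eqref{eq:sinitial}, identify the physical frame with the restricted holomorphic solution using Proposition~\ref{prop:physicalFrame} together with \eqref{eq:Uball}, and read off the weight comparison from Lemma~\ref{lem:rowmap} and \eqref{eq:physicalWeights}. Your explicit realness argument, via conjugation symmetry of $\cF$ and uniqueness, fills in a detail the paper leaves implicit when it says the restriction "satisfies the same real ODE."
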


\begin{proof}
On $\cE_0$,
$0\le\tau\le Cr\sqrt{\Lambda_\eps}$, so the stated radius condition
places the segment in $[0,\rho_p)\cap[0,1/2)$. By
\eqref{eq:sinitial}, $\abs{(A_xe)_i}\le1$, and hence
\[
 (A(x+\tau e)-b)_i
 =(Ax-b)_i\bigl(1+\tau(A_xe)_i\bigr)>0.
\]
Thus the segment lies in $\operatorname{int}K$. The physical
horizontal frame exists there and, by
Proposition~\ref{prop:physicalFrame}, satisfies the same real ODE
and initial condition as the restriction of the holomorphic frame.
Local uniqueness identifies them near zero. If their maximal interval
of equality ended earlier, \eqref{eq:Uball} would keep the common
solution in the row-map neighborhood, and local uniqueness at the
endpoint would extend it, a contradiction. Lemma~\ref{lem:rowmap}
and \eqref{eq:physicalWeights} now give
\eqref{eq:realPathWeightComparison}.
\end{proof}

\begin{lemma}[Uniform path bounds]\label{lem:pathbounds}
There are universal constants $c,C>0$ with the following property. Let
\begin{equation}
 \Gamma_\eps=C_\Gamma p^2\Lambda_\eps^2.
 \label{eq:Gammaeps}
\end{equation}
Here $C_\Gamma\ge1$ is a fixed universal constant chosen large
enough for all estimates in this lemma.
If
\begin{equation}
 r\le c\rho_p\Lambda_\eps^{-1/2},
 \label{eq:rpathcondition}
\end{equation}
then, on $\cE_0$, for every $0\le t\le\eta=r/\sqrt d$, the point $x_t=x+th$ belongs to $\operatorname{int}K$ and
\begin{align}
 \norm{\zeta_t}_\infty+\norm{s_t}_\infty&\le\Gamma_\eps,
 \label{eq:pathrho}\\
 \norm{u_t}+\norm{v_t}+\norm{q_t}+\norm{q_t'}+\norm{v_t'}
 &\le\Gamma_\eps\sqrt d,
 \label{eq:pathbasic}\\
 \abs{E_1'(t)}&\le\Gamma_\eps d,
 \label{eq:pathE}\\
 \norm{h}_{g_0(x_t)}&\le\Gamma_\eps\sqrt d.
 \label{eq:pathmetric}
\end{align}
For every integer $k\ge1$,
\begin{equation}
 \norm{N_t^{(k)}}_{\op}
 \le Cp^2 k!\rho_p^{-k}\Gamma_\eps^k d^{k/2}.
 \label{eq:pathNall}
\end{equation}
Consequently, for $j\ge2$,
\begin{align}
 \abs{G_j(t)}
 &\le Cp^2\Gamma_\eps^2(j-2)!
 \left(\frac{\Gamma_\eps}{\rho_p}\right)^{j-2}d^{j/2},
 \label{eq:pathGj}\\
 \abs{H_j(t)}
 &\le Cp^2\Gamma_\eps^2(j-1)!
 \left(\frac{\Gamma_\eps}{\rho_p}\right)^{j-1}d^{(j+1)/2}.
 \label{eq:pathHj}
\end{align}
\end{lemma}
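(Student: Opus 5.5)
The plan is to reduce everything to the holomorphic frame of Lemma~\ref{lem:analyticflow}, restricted to the real segment by Lemma~\ref{lem:realAnalyticIdentification}, and to read off each path quantity from base-point data times bounded distortion factors.

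\emph{Setup and interior claim.} Put $\lambda=\norm h$, $e=h/\lambda$ and $\tau=t\lambda$. On $\cE_0$, condition \eqref{eq:rpathcondition} gives $\tau\le Cr\sqrt{\Lambda_\eps}<\min\{\rho_p,1/2\}$ after shrinking $c$. Lemma~\ref{lem:realAnalyticIdentification} then yields $x_t\in\operatorname{int}K$, the identification $N_t=N(U(\tau))$, and the weight comparison \eqref{eq:realPathWeightComparison}.

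\emph{Basic vectors.} From \eqref{eq:sexplicit} with $s_i(0)=\lambda^{-1}(A_xh)_i$, we get $(s_t)_i=(A_xh)_i/(1+t(A_xh)_i)$. Moreover $(A_xh)_i=w_i^{-\beta}\sqrt{w_i}\,\zeta_{0,i}=w_i^{1/p}\zeta_{0,i}$, which is bounded by $C\sqrt{\Lambda_\eps}$ on $\cE_0$. Since $t\le r/\sqrt d$ is tiny, the denominators lie in $[1/2,2]$, so $\norm{s_t}_\infty\le C\sqrt{\Lambda_\eps}$. The weight comparison gives $\abs{\zeta_{t,i}}=w_{t,i}^{-1/p}\abs{s_{t,i}}\le C\abs{\zeta_{0,i}}+\dots$; I would check directly that $w_{t}^{-1/p}s_t$ equals $w_0^{-1/p}s_0$ times bounded factors, which proves \eqref{eq:pathrho}.

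For \eqref{eq:pathbasic}, $\norm{v_t}^2=\sum_iw_{t,i}s_{t,i}^2\le C\sum_iw_{0,i}s_{0,i}^2=C\norm{Bh}^2\le C\norm h^2\le Cd\Lambda_\eps$. The same argument handles $u_t$, since $W^{\beta}\le W^{1/2}\cdot W^{-1/p}$ and $\norm{\zeta}_\infty$ is bounded. For $q_t=W_t^{-1/2}u_t^{\had2}$, each entry is $w_t^{1/2}\zeta_t^2$ times bounded factors, so $\norm{q_t}\le\norm{\zeta_t}_\infty\,\norm{W_t^{1/2}\zeta_t}\le\Gamma_\eps\sqrt d$. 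The derivatives $q_t'$ and $v_t'$ follow from $s'=-s^{\had2}$ and \eqref{eq:LewisDerivative}. Each derivative costs one factor $\norm{s_t}_\infty$ or $\norm{W^{-1/2}N_tv_t}_\infty$, and the latter is controlled by \eqref{eq:Nadm2} with $a^{-2}\sim p^2$; this is where the $p^2$ in $\Gamma_\eps$ comes from. The bounds \eqref{eq:pathE} and \eqref{eq:pathmetric} are similar, since $\norm h_{g_0(x_t)}=\norm{u_t}$ and $E_1'$ is a sum of cubic–quartic terms in $s_t$ weighted by $W^\alpha$, giving $\Gamma_\eps d$.

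\emph{Jets along the path.} To prove \eqref{eq:pathNall}, I apply Theorem~\ref{thm:jets} at the moving base point $x_t$: $\norm{N_t^{(k)}}_{\op}\le Cp^2k!\rho_p^{-k}\norm h_{g_0(x_t)}^k$. Inserting \eqref{eq:pathmetric} gives the claim.

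\emph{Bounds on $G_j$ and $H_j$.} I expect this to be the main bookkeeping step. I will bound each term of $H_j$ and $G_j$ by Cauchy--Schwarz in operator norm, using $\norm{q_t}\,\norm{N_t^{(j-1)}}_{\op}\,\norm{v_t}$. This gives
\[
\Gamma_\eps^2d\cdot p^2(j-1)!\rho_p^{-(j-1)}\Gamma_\eps^{j-1}d^{(j-1)/2},
\]
which matches \eqref{eq:pathHj}. For $G_j$ the factor is $N^{(j-2)}$, and $q_t'$, $v_t'$ replace $q_t$, $v_t$, which yields \eqref{eq:pathGj} with $(j-2)!$ and $d^{j/2}$.

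The delicate point is $j=2$, where $N^{(0)}=N_t$ is bounded by $p$ rather than by a Cauchy bound. This case is still covered by the $Cp^2$ prefactor.

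Finally, I will fix $C_\Gamma$ large enough to absorb every constant appearing above, uniformly in $j$.
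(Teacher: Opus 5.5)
Your setup, the bounds on $s_t,\zeta_t,u_t,v_t,q_t$, the jet bound at the moving point $x_t$, and the $G_j,H_j$ bookkeeping (including the $j=2$ case) all match the paper. The gap is in the derivative bounds for $v_t'$, $q_t'$ and $E_1'$. You propose to pay one factor $\norm{W_t^{-1/2}N_tv_t}_\infty=\norm{\ell_t'}_\infty$ and to control it by \eqref{eq:Nadm2}. That estimate gives
\[
 \norm{W_0^{-1/2}N_tv_t}_\infty
 \le Cp\norm{W_0^{-1/2}v_t}_\infty+Cp^2\norm{v_t}_2
 \le Cp\sqrt{\Lambda_\eps}+Cp^2\sqrt{d\Lambda_\eps}.
\]
Along the proposal direction $h$ one has $\norm{v_t}\asymp\sqrt{d}$; in Lemma~\ref{lem:analyticflow} the direction was a unit vector, which is why $\norm{v}\le C$ there. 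So this factor is of order $p^2\sqrt d$, not polylogarithmic.

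Multiplying it by $\norm{v_t}$ or $\norm{q_t}$ yields $\norm{v_t'},\norm{q_t'}=O(p^2\Lambda_\eps d)$ instead of $O(\Gamma_\eps\sqrt d)$. Your sketch also treats $E_1'$ as if it were only cubic--quartic in $s_t$, but its term $\alpha\sum_iw_{t,i}^\alpha\ell_{t,i}'s_{t,i}^3$ would then be bounded only by $\norm{\ell_t'}_\infty\norm{s_t}_\infty\norm{u_t}^2=O(d^{3/2})$. These losses are fatal downstream. An extra $\sqrt d$ in every $G_j$ and in $E_1'$ destroys \eqref{eq:Eterm} and \eqref{eq:Gseries} at a polylogarithmic radius.

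The missing idea is an exact Hadamard cancellation that keeps $\ell_t'$ out of the sup norm. Since $\ell_t'=-W_t^{-1/2}N_tv_t$,
\[
 \ell_t'\had v_t=-(N_tv_t)\had s_t,
 \qquad
 \ell_t'\had q_t=-(N_tv_t)\had\zeta_t^{\had2}.
\]
Hence $v_t'=-s_t\had v_t-\tfrac12(N_tv_t)\had s_t$ and $q_t'=\bigl((2\beta-\tfrac12)\ell_t'-2s_t\bigr)\had q_t$ are bounded by $\norm{s_t}_\infty$ or $\norm{\zeta_t}_\infty^2$, which are polylogarithmic on $\cE_0$, times $\norm{N_tv_t}\le p\norm{v_t}$. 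The last inequality uses $0\preceq N_t\preceq p\Id$ at the real point $x_t$.

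For $E_1'$, Cauchy--Schwarz together with $W_t^{1/2}\ell_t'=-N_tv_t$ and $w_{t,i}^{\alpha-1}s_{t,i}^4=\zeta_{t,i}^2s_{t,i}^2$ gives
\[
 \sum_iw_{t,i}^\alpha\abs{\ell_{t,i}'}\abs{s_{t,i}}^3
 \le\norm{u_t}\norm{\zeta_t}_\infty\norm{s_t}_\infty\norm{N_tv_t}
 =O(p\Lambda_\eps^2d).
\]
With these identities only the real-point bound $\norm{N_t}_{\op}\le p$ is needed, and \eqref{eq:Nadm2} plays no role. In particular, the $p^2$ in $\Gamma_\eps$ is slack rather than forced by this step.
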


\begin{proof}
If $h=0$, every assertion is immediate. Otherwise put
$\lambda=\norm{h}$, $e=h/\lambda$, and $\tau=t\lambda$.
Lemma~\ref{lem:realAnalyticIdentification} gives
$x_t\in\operatorname{int}K$ and
\eqref{eq:realPathWeightComparison} throughout the required
interval. We use this identification below.

For the original direction $h$, the exact formula is
\begin{equation}
 s_{t,i}=\frac{s_{0,i}}{1+ts_{0,i}},
 \qquad s_{0,i}=w_{0,i}^{1/p}\zeta_{0,i}.
 \label{eq:slackpath}
\end{equation}
The event $\cE_0$ gives $\norm{s_0}_\infty\le C\sqrt{\Lambda_\eps}$. Decreasing $c$ if necessary, $\abs{ts_{0,i}}\le1/2$. Hence $\norm{s_t}_\infty\le C\sqrt{\Lambda_\eps}$. Also,
\begin{equation}
 \zeta_{t,i}=\zeta_{0,i}
 \left(\frac{w_{t,i}}{w_{0,i}}\right)^{-1/p}
 (1+ts_{0,i})^{-1},
 \label{eq:rhoprop}
\end{equation}
so \eqref{eq:realPathWeightComparison} proves \eqref{eq:pathrho}.

The identities
\begin{equation}
 u_{t,i}=\sqrt{w_{t,i}}\zeta_{t,i},
 \qquad v_t=W_t^{1/p}u_t,
 \qquad q_{t,i}=\sqrt{w_{t,i}}\zeta_{t,i}^2
 \label{eq:uvqrho}
\end{equation}
and $\sum_iw_{t,i}=d$ imply
\begin{equation}
 \norm{u_t}+\norm{v_t}+\norm{q_t}
 \le C\Gamma_\eps\sqrt d.
 \label{eq:uvqbound}
\end{equation}
The derivative formulas are
\begin{align}
 v_t'&=-s_t\had v_t-\frac12(N_tv_t)\had s_t,
 \label{eq:vprime}\\
 q_t'&=\left(\left(2\beta-\frac12\right)\ell_t'-2s_t\right)\had q_t,
 \qquad \ell_t'=-W_t^{-1/2}N_tv_t.
 \label{eq:qprime}
\end{align}
At the real point $x_t$, $0\preceq N_t\preceq p\Id_m$, so
\begin{equation}
 \norm{N_tv_t}\le p\norm{v_t}.
 \label{eq:Nvpath}
\end{equation}
For $q_t'$, use the exact cancellation
\begin{equation}
 \ell_t'\had q_t
 =-\bigl(W_t^{-1/2}N_tv_t\bigr)
 \had\bigl(W_t^{-1/2}u_t^{\had2}\bigr)
 =-(N_tv_t)\had\zeta_t^{\had2}.
 \label{eq:ellq}
\end{equation}
Indeed, with $R=C\sqrt{\Lambda_\eps}$ denoting a common bound for
$\norm{s_t}_\infty$ and $\norm{\zeta_t}_\infty$,
\[
 \norm{v_t'}\le R\left(\norm{v_t}+\tfrac12\norm{N_tv_t}\right)
 \le CpR^2\sqrt d,
\]
and the sharper identity \eqref{eq:ellq} gives
\[
 \norm{q_t'}
 \le C\left(\norm{\zeta_t}_\infty^2\norm{N_tv_t}
       +\norm{s_t}_\infty\norm{q_t}\right)
 \le CpR^3\sqrt d.
\]
Together with \eqref{eq:uvqbound}, these inequalities prove
\eqref{eq:pathbasic} after fixing the stated universal
$C_\Gamma$ sufficiently large.

Differentiate $E_1(t)=\sum_iw_{t,i}^\alpha s_{t,i}^3$:
\begin{equation}
 E_1'(t)=\alpha\sum_iw_{t,i}^\alpha\ell_{t,i}'s_{t,i}^3
 -3\sum_iw_{t,i}^\alpha s_{t,i}^4.
 \label{eq:Eprimeexact}
\end{equation}
The second sum is at most $3\norm{s_t}_\infty^2\norm{u_t}^2$. For the first, Cauchy--Schwarz and $W_t^{1/2}\ell_t'=-N_tv_t$ give
\begin{align*}
 \sum_iw_i^\alpha\abs{\ell_i'}\abs{s_i}^3
 &\le\left(\sum_iw_i^\alpha s_i^2\right)^{1/2}
 \left(\sum_iw_i^\alpha(\ell_i')^2s_i^4\right)^{1/2}\\
 &\le\norm{u_t}\norm{\zeta_t}_\infty\norm{s_t}_\infty
 \norm{N_tv_t}.
\end{align*}
This proves \eqref{eq:pathE}. Since $F(t)=\norm{u_t}^2=\norm{h}_{g_0(x_t)}^2$, \eqref{eq:pathmetric} follows as well.

Apply \eqref{eq:diagNjet} at $x_t$ in the direction $h$ and use \eqref{eq:pathmetric} to obtain \eqref{eq:pathNall}. Inserting \eqref{eq:pathbasic} and \eqref{eq:pathNall} into the definitions of $G_j$ and $H_j$ proves \eqref{eq:pathGj} and \eqref{eq:pathHj}.
More explicitly, for $j\ge3$ the two factors outside
$N_t^{(j-2)}$ in $G_j$ contribute at most
$\Gamma_\eps^2d$, and hence
\[
 \abs{G_j(t)}
 \le Cp^2\Gamma_\eps^2d\,
 (j-2)!\rho_p^{-(j-2)}
 \Gamma_\eps^{j-2}d^{(j-2)/2},
\]
which is \eqref{eq:pathGj}.  When $j=2$, the same conclusion follows
directly from $\norm{N_t}_{\op}\le p$, so no zeroth-order use of
\eqref{eq:pathNall} is required.  Similarly, the two exterior
factors in $H_j$ contribute $\Gamma_\eps^2d$, while
$N_t^{(j-1)}$ contributes
\[
 Cp^2(j-1)!\rho_p^{-(j-1)}
 \Gamma_\eps^{j-1}d^{(j-1)/2}.
\]
Their product is exactly the right side of \eqref{eq:pathHj}; in
particular its dimension power is
$d\,d^{(j-1)/2}=d^{(j+1)/2}$.
\end{proof}

\subsection{Summing the expansion}

The parameter choice below keeps localization valid, bounds each
low-order contribution by $O(\eps)$, absorbs the terminal $\sqrt d$
factor, and retains an inverse-polylogarithmic radius.

\begin{lemma}[Compatible choice of truncation parameters]
\label{lem:constants}
Fix once and for all the constants in all preceding estimates. Let
$c_{\rm path}\in(0,1]$ be a valid constant in
\eqref{eq:rpathcondition}, and write
$\rho_p=c_\rho p^{-3}$ with $c_\rho\in(0,1]$. There are universal
constants
\[
 C_{\rm cl},C_2,C_Q,C_J,C_S,A_0,B_0\ge1,
 \qquad c_S,c_{\rm low}>0,
\]
which can be fixed in the displayed order
\begin{equation}
 C_{\rm cl}\ ;\ C_2\ ;\ C_Q\ ;\ C_J\ ;\ C_S\ ;\
 c_S\ ;\ (A_0,B_0,c_{\rm low}),
 \label{eq:constantOrder}
\end{equation}
such that the following holds. Define
\begin{align}
 Q_\eps&=C_Qp^4\Gamma_\eps^4,
 \label{eq:Qeps}\\
 J&=\left\lceil
 C_J\log\!\left(\frac{2(d+1)Q_\eps}{\eps}\right)
 \right\rceil,
 \label{eq:Jchoice}\\
 r_{\rm cand}
 &=c_S\eps^3\rho_pQ_\eps^{-1}J^{-C_S}.
 \label{eq:rcandidate}
\end{align}
For every $0<r\le r_{\rm cand}$, if
\[
 \theta=\frac{r\Gamma_\eps}{\rho_p},
 \qquad
 \xi=(C_{\rm cl}J)^{C_2}\frac r{\rho_p},
\]
then
\begin{align}
 r&\le c_{\rm path}\rho_p\Lambda_\eps^{-1/2},
 &\theta&\le J^{-4},
 &\xi&\le\frac14,
 \label{eq:constantSeries}\\
 C_{\rm cl}p^4r^2\eps^{-2}&\le\frac\eps8,
 &\frac12\Gamma_\eps r^2&\le\frac\eps{20},
 &2p^2\rho_p\xi^2&\le\frac{\eps^2}{160},
 \label{eq:constantLow}\\
 C_{\rm cl}p^2\Gamma_\eps^2r^2&\le\frac\eps{40},
 &C_{\rm cl}p^2\Gamma_\eps^2r\sqrt d\,\theta^J
 &\le\frac\eps{40}.
 \label{eq:constantTerminal}
\end{align}
Moreover,
\begin{equation}
 r_{\rm cand}\ge
 c_{\rm low}\eps^3p^{-A_0}\Lambda_\eps^{-B_0}.
 \label{eq:candidateLower}
\end{equation}
\end{lemma}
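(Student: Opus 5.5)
This is bookkeeping. Every displayed inequality is monotone in $r$, so I check each one at $r=r_{\rm cand}$. The constants are fixed in the order \eqref{eq:constantOrder}, so that each new constant may depend only on those fixed before it.

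\textbf{Step 1: fix the constants.} $C_{\rm cl}$ collects the universal constants from Lemmas~\ref{lem:cubic} and~\ref{lem:pathbounds} and Theorem~\ref{thm:Hbase} that will later multiply the low-order and terminal contributions. $C_2$ is the exponent making $(C_{\rm cl}J)^{C_2}$ dominate the combinatorial growth $(Cj)^{Cj}$ coming from Theorem~\ref{thm:Hbase}. $C_Q$ is chosen so that $Q_\eps\ge\max\{\Gamma_\eps,\,C_{\rm cl}p^4,\,p^2\Lambda_\eps^{1/2}\}$; this is automatic since $\Gamma_\eps=C_\Gamma p^2\Lambda_\eps^2\ge1$.

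\textbf{Step 2: check the conditions for fixed $C_J$, $C_S$, $c_S$.}

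\emph{Localization.} Since $Q_\eps\ge\Lambda_\eps^{1/2}$ and $\eps,J^{-C_S}\le1$, we get $r_{\rm cand}\le c_S\rho_p\Lambda_\eps^{-1/2}\le c_{\rm path}\rho_p\Lambda_\eps^{-1/2}$ once $c_S\le c_{\rm path}$.

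\emph{$\theta$ and $\xi$.} Here $\theta=r\Gamma_\eps/\rho_p\le c_S\eps^3Q_\eps^{-1}\Gamma_\eps J^{-C_S}\le c_SJ^{-C_S}$, which is at most $J^{-4}$ once $C_S\ge4$ and $c_S\le1$. Similarly $\xi\le (C_{\rm cl}J)^{C_2}c_SJ^{-C_S}\le C_{\rm cl}^{C_2}c_S\le 1/4$ provided $C_S\ge C_2$ and $c_S\le C_{\rm cl}^{-C_2}/4$.

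\emph{The quadratic conditions.} Each has the form (constant)$\cdot r^2\le$ (power of $\eps$). Since $\rho_p\le1$ and $Q_\eps^{-1}\le p^{-4}\Gamma_\eps^{-4}$:
\begin{itemize}
\item $C_{\rm cl}p^4r^2\eps^{-2}\le C_{\rm cl}c_S^2\eps^4$;
\item $\tfrac12\Gamma_\eps r^2\le c_S^2\eps^6$ and $C_{\rm cl}p^2\Gamma_\eps^2r^2\le C_{\rm cl}c_S^2\eps^6$;
\item $2p^2\rho_p\xi^2$ is handled by writing $\xi\le (C_{\rm cl}J)^{C_2}c_S\eps^3J^{-C_S}$ with $C_S\ge C_2$, which gives $\xi^2\le C_{\rm cl}^{2C_2}c_S^2\eps^6$. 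The factor $p^2\rho_p=c_\rho p^{-1}\le1$ then makes this at most $\eps^2/160$ for small $c_S$.
\end{itemize}

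\emph{The terminal condition.} Here $C_{\rm cl}p^2\Gamma_\eps^2r\sqrt d\,\theta^J\le C_{\rm cl}\sqrt d\,J^{-4J}$, using $p^2\Gamma_\eps^2r\le1$. I expect this to be the main point. Since $J\ge C_J\log(2(d+1)/\eps)\ge2$, we have $J^{-4J}\le 2^{-4J}\le(\eps/(2(d+1)))^{4C_J\log2}$. Choosing $C_J$ large (after $C_{\rm cl}$) makes this at most $\eps/(40C_{\rm cl}\sqrt d)$. This absorbs both the $\sqrt d$ and the $\eps$.

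\textbf{Step 3: the lower bound \eqref{eq:candidateLower}.} First, $Q_\eps=C_Qp^4C_\Gamma^4p^8\Lambda_\eps^8=O(p^{12}\Lambda_\eps^8)$ and $\rho_p=c_\rho p^{-3}$. Next,
\[
J\le 1+C_J\bigl(\log(2(d+1)/\eps)+\log Q_\eps\bigr)\le C(\Lambda_\eps+\log p)\le C p\Lambda_\eps ,
\]
using $\log(2(d+1)/\eps)\le\Lambda_\eps$ and $\log Q_\eps\le C(\log p+\log\Lambda_\eps)$. Hence $J^{-C_S}\ge (Cp\Lambda_\eps)^{-C_S}$, and multiplying the pieces gives
\[
r_{\rm cand}\ge c_{\rm low}\eps^3p^{-(15+C_S)}\Lambda_\eps^{-(8+C_S)} ,
\]
which fixes $A_0$, $B_0$ and $c_{\rm low}$ last.

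The only delicate points are the order of dependencies ($C_S\ge\max\{4,C_2\}$ before $c_S$) and the terminal estimate; everything else is direct monotone substitution.
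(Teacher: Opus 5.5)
Your proposal is correct and follows the paper's proof essentially step by step. You use the same fixing order, the same monotone substitution at $r=r_{\rm cand}$ using $Q_\eps\ge\Gamma_\eps$ and $C_S\ge\max\{4,C_2\}$ before $c_S$, and the same estimate $J\le Cp\Lambda_\eps$, which gives $A_0=15+C_S$ and $B_0=8+C_S$. The one cosmetic difference is the terminal inequality: you use $p^2\Gamma_\eps^2r\le1$ and let $C_J$ depend on $C_{\rm cl}$ through $\log(2(d+1)/\eps)$, whereas the paper absorbs $C_{\rm cl}p^2\Gamma_\eps^2$ into $Q_\eps$ and works with $X=2(d+1)Q_\eps/\eps$; both are valid under the stated order of constants.
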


\begin{proof}
Choose $C_{\rm cl}$ to dominate every universal coefficient already
fixed in the cubic, jet, Wick and path estimates used below, including
the fixed base constant inside $(Cj)^{Cj}$ in
\eqref{eq:Hbase}. Next choose $C_2$ so that, for all integers
$J\ge j\ge2$,
\begin{equation}
 \frac{(Cj)^{Cj}}{j!}(1+J)^{j/2}
 \le(C_{\rm cl}J)^{C_2j},
 \label{eq:namedStirling}
\end{equation}
where the $C$ on the left is the now fixed constant from
\eqref{eq:Hbase}. This is possible by Stirling's inequality.
Choose $C_Q$ so large that
\begin{equation}
 Q_\eps\ge C_{\rm cl}
 \max\{1,p^4,p^2\Gamma_\eps^2,\Gamma_\eps\}
 \label{eq:Qdominates}
\end{equation}
for every $p\ge4$ and $\Lambda_\eps\ge1$. Next choose $C_J\ge1$ for the
terminal estimate below, choose $C_S\ge C_2+4$, and only then choose
$c_S>0$ sufficiently small in terms of these fixed universal
constants. This is the order in \eqref{eq:constantOrder}.

Since $\Gamma_\eps=C_\Gamma p^2\Lambda_\eps^2$,
\[
 Q_\eps=C_QC_\Gamma^4p^{12}\Lambda_\eps^8.
\]
In particular, with
\[
 X:=\frac{2(d+1)Q_\eps}{\eps},
\]
one has $X\ge16$.  Thus the ceiling in \eqref{eq:Jchoice} defines an
integer $J\ge3$, and
\begin{equation}
 C_J\log X\le J\le C_J\log X+1\le2C_J\log X.
 \label{eq:JceilingBounds}
\end{equation}
For $0<r\le r_{\rm cand}$,
\begin{align*}
 \frac{r\Lambda_\eps^{1/2}}{\rho_p}
 &\le c_S\eps^3Q_\eps^{-1}J^{-C_S}
 \Lambda_\eps^{1/2}
 \le Cc_S\Lambda_\eps^{-15/2},\\
 \theta
 &\le c_S\eps^3\frac{\Gamma_\eps}{Q_\eps}J^{-C_S}
 \le c_SJ^{-C_S},\\
 \xi
 &\le c_SC_{\rm cl}^{C_2}\eps^3Q_\eps^{-1}
 J^{C_2-C_S}.
\end{align*}
Thus a sufficiently small $c_S$, chosen after $C_S$, gives all three
inequalities in \eqref{eq:constantSeries}; in particular
$r\le\rho_p\le1$. The remaining low-order estimates follow from
\begin{align*}
 C_{\rm cl}p^4r^2\eps^{-2}
 &\le C_{\rm cl}c_S^2\eps^4
 \frac{p^4\rho_p^2}{Q_\eps^2},\\
 \frac12\Gamma_\eps r^2
 &\le\frac12c_S^2\eps^6
 \frac{\rho_p^2\Gamma_\eps}{Q_\eps^2},\\
 2p^2\rho_p\xi^2
 &\le
 2c_S^2C_{\rm cl}^{2C_2}\eps^6
 \frac{p^2\rho_p}{Q_\eps^2}J^{2C_2-2C_S},\\
 C_{\rm cl}p^2\Gamma_\eps^2r^2
 &\le C_{\rm cl}c_S^2\eps^6
 \frac{p^2\rho_p^2\Gamma_\eps^2}{Q_\eps^2}.
\end{align*}
Here
$p^4\rho_p^2\le1$, $p^2\rho_p\le1$, and every remaining displayed
ratio is at most one by \eqref{eq:Qdominates}. Since
$\eps^4\le\eps$, $\eps^6\le\eps^2$, and $\eps^2\le\eps$ for
$\eps\in(0,1/4)$, decreasing $c_S$ once proves
\eqref{eq:constantLow} and the first inequality in
\eqref{eq:constantTerminal}.

For the last terminal inequality, retain the preceding value of
$X$. Then
$Q_\eps\sqrt d\le\eps X/2$, and
\eqref{eq:Qdominates}, $r\le1$, and $\theta\le J^{-4}$ give
\[
 C_{\rm cl}p^2\Gamma_\eps^2r\sqrt d\,\theta^J
 \le Q_\eps\sqrt d\,J^{-4J}.
\]
Choose $C_J$ sufficiently large that the integer in
\eqref{eq:Jchoice} satisfies $J\ge e$; the inequality
$J\ge C_J\log X$ is automatic from
\eqref{eq:JceilingBounds}. Then
\[
 J^{-4J}\le e^{-4J}\le X^{-4C_J},
 \qquad
 Q_\eps\sqrt d\,J^{-4J}
 \le\frac\eps2X^{1-4C_J}\le\frac\eps{40}.
\]
This choice of $C_J$ precedes the choices of $C_S$ and $c_S$, as
required.

Finally,
\[
 \log X
 \le C\bigl(\Lambda_\eps+\log p+\log\Lambda_\eps\bigr)
 \le Cp\Lambda_\eps,
\]
and hence \eqref{eq:JceilingBounds} gives
$J\le Cp\Lambda_\eps$. Since
$Q_\eps\le Cp^{12}\Lambda_\eps^8$ and
$\rho_p=c_\rho p^{-3}$, substitution in
\eqref{eq:rcandidate} gives
\[
 r_{\rm cand}
 \ge c_{\rm low}\eps^3
 p^{-(15+C_S)}\Lambda_\eps^{-(8+C_S)}.
\]
Thus one may set $A_0=15+C_S$ and $B_0=8+C_S$, with
$c_{\rm low}>0$ depending only on the constants already fixed in
\eqref{eq:constantOrder}.
\end{proof}

\begin{proof}[Proof of Theorem~\ref{thm:asc}]
Fix $\eps\in(0,1/4)$ and the quantities in
Lemma~\ref{lem:constants}. After decreasing $c_{\rm asc}$ and
increasing $C_{\rm asc}$ in \eqref{eq:reps},
\eqref{eq:candidateLower} gives
$\underline r_\eps\le r_{\rm cand}$. Fix an arbitrary
$0<r\le\underline r_\eps$, set $\eta=r/\sqrt d$, and put
\begin{equation}
 \theta=\frac{r\Gamma_\eps}{\rho_p}.
 \label{eq:theta}
\end{equation}
Lemma~\ref{lem:constants} supplies all smallness conditions below,
including \eqref{eq:rpathcondition}.
By \eqref{eq:Fprime}, Lemma~\ref{lem:selective-expansion}, and one integration of $E_1$,
\begin{align}
 \frac12\abs{F(\eta)-F(0)}
 \le{}&\eta\abs{E_1(0)+\beta H_1(0)}
 +\int_0^\eta(\eta-t)\abs{E_1'(t)}\dd t
 \notag\\
 &+\beta\sum_{j=2}^J\frac{\eta^j}{j!}\abs{H_j(0)}
 \notag\\
 &+\beta\sum_{j=2}^J\int_0^\eta
 \frac{(\eta-t)^{j-1}}{(j-1)!}\abs{G_j(t)}\dd t
 \notag\\
 &+\beta\int_0^\eta\frac{(\eta-t)^J}{J!}
 \bigl(\abs{G_{J+1}(t)}+\abs{H_{J+1}(t)}\bigr)\dd t.
 \label{eq:masterbound}
\end{align}

By Lemma~\ref{lem:cubic} and Chebyshev's inequality,
\begin{equation}
 \Pp\!\left(
 \eta\abs{E_1(0)+\beta H_1(0)}>\frac\eps{20}
 \right)
 \le C_{\rm cl}p^4\frac{r^2}{\eps^2}\le\frac\eps8.
 \label{eq:cubicprob}
\end{equation}
The final inequality is part of \eqref{eq:constantLow}.

On $\cE_0$, Lemma~\ref{lem:pathbounds} gives
\begin{equation}
 \int_0^\eta(\eta-t)\abs{E_1'(t)}\dd t
 \le\frac12\eta^2\Gamma_\eps d
 =\frac12\Gamma_\eps r^2\le\frac\eps{20}.
 \label{eq:Eterm}
\end{equation}

Let
\[
 S_J=\sum_{j=2}^J\frac{\eta^j}{j!}\abs{H_j(0)}.
\]
Theorem~\ref{thm:Hbase} yields
\begin{align}
 \E S_J
 &\le p^2\rho_p\sum_{j=2}^J
 \frac{(Cj)^{Cj}}{j!}
 \left(\frac r{\rho_p}\right)^j
 \left(1+\frac jd\right)^{j/2}.
 \label{eq:ESK}
\end{align}
For $j\le J$ and $d\ge1$, the last factor is at most
$(1+J)^{j/2}$. The named Stirling estimate
\eqref{eq:namedStirling} therefore bounds the factor multiplying
$(r/\rho_p)^j$ by $(C_{\rm cl}J)^{C_2j}$. Put
$\xi=(C_{\rm cl}J)^{C_2}r/\rho_p$. By
\eqref{eq:constantSeries}, $\xi\le1/4$. Consequently,
\[
 \E S_J\le p^2\rho_p\sum_{j=2}^{\infty}\xi^j
 \le 2p^2\rho_p\xi^2
 \le\frac{\eps^2}{160}
\]
by \eqref{eq:constantLow}. Thus
\begin{equation}
 \E S_J\le\frac{\eps^2}{160}.
 \label{eq:ESKsmall}
\end{equation}
Markov's inequality implies
\begin{equation}
 \Pp\!\left(S_J>\frac\eps{20}\right)\le\frac\eps8.
 \label{eq:SKprob}
\end{equation}

On $\cE_0$, \eqref{eq:pathGj} and
$\int_0^\eta(\eta-t)^{j-1}\dd t=\eta^j/j$ give
\begin{align}
 &\sum_{j=2}^J\int_0^\eta
 \frac{(\eta-t)^{j-1}}{(j-1)!}\abs{G_j(t)}\dd t\notag\\
 &\qquad\le C_{\rm cl}p^2\Gamma_\eps^2r^2
 \sum_{j=2}^J\frac{\theta^{j-2}}{j(j-1)}
 \le C_{\rm cl}p^2\Gamma_\eps^2r^2
 \le\frac\eps{20}.
 \label{eq:Gseries}
\end{align}

The terminal $G_{J+1}$ term is at most
\begin{equation}
 C_{\rm cl}p^2\Gamma_\eps^2r^2
 \frac{\theta^{J-1}}{J(J+1)}.
 \label{eq:Gterminal}
\end{equation}
Indeed, integrating the order-$J+1$ instance of
\eqref{eq:pathGj} gives
\[
 \frac{\eta^{J+1}}{(J+1)!}\,
 Cp^2\Gamma_\eps^2(J-1)!
 \left(\frac{\Gamma_\eps}{\rho_p}\right)^{J-1}
 d^{(J+1)/2},
\]
and $\eta=r/\sqrt d$ turns this expression into
\eqref{eq:Gterminal}; all powers of $d$ cancel.
The first inequality in \eqref{eq:constantTerminal}, together with
$\theta\le J^{-4}\le1$ and $J\ge3$, therefore gives
\begin{equation}
 C_{\rm cl}p^2\Gamma_\eps^2r^2
 \frac{\theta^{J-1}}{J(J+1)}
 \le C_{\rm cl}p^2\Gamma_\eps^2r^2
 \le\frac\eps{40}.
 \label{eq:GterminalSmall}
\end{equation}
For the terminal $H_{J+1}$ term, \eqref{eq:pathHj} gives
\begin{equation}
 \int_0^\eta\frac{(\eta-t)^J}{J!}\abs{H_{J+1}(t)}\dd t
 \le C_{\rm cl}p^2\Gamma_\eps^2
 \frac{r\sqrt d}{J+1}\theta^J.
 \label{eq:Hterminal}
\end{equation}
Here the corresponding unsimplified bound is
\[
 \frac{\eta^{J+1}}{(J+1)!}\,
 Cp^2\Gamma_\eps^2J!
 \left(\frac{\Gamma_\eps}{\rho_p}\right)^J
 d^{(J+2)/2}.
\]
Thus exactly one factor $\sqrt d$ remains:
$d^{(J+2)/2}d^{-(J+1)/2}=\sqrt d$. The terminal estimate therefore
retains exactly this factor. The logarithmic choice of $J$ absorbs it
through $\theta^J$, as quantified in \eqref{eq:constantTerminal}.
The second inequality in \eqref{eq:constantTerminal} gives
\begin{equation}
 C_{\rm cl}p^2\Gamma_\eps^2r\sqrt d\,\theta^J
 \le\frac\eps{40}.
 \label{eq:terminalkill}
\end{equation}
Since $(J+1)^{-1}\le1$, \eqref{eq:Hterminal} and
\eqref{eq:terminalkill} show separately that the terminal
$H_{J+1}$ term is at most $\eps/40$.  Together with
\eqref{eq:GterminalSmall}, this proves that the entire terminal
remainder is at most $\eps/20$.

The failures of $\cE_0$, \eqref{eq:cubicprob}, and
\eqref{eq:SKprob} have total probability at most
\[
 \frac\eps8+\frac\eps8+\frac\eps8
 =\frac{3\eps}{8}\le\eps.
\]
On their complement, each of the five contributions on the right side
of \eqref{eq:masterbound} is at most $\eps/20$, and $0<\beta<1$. Hence
$\frac12\abs{F(\eta)-F(0)}\le\eps/4$, and therefore
\begin{equation}
 \abs{F(\eta)-F(0)}\le\frac\eps2\le2\eps.
 \label{eq:FASC}
\end{equation}
If $z=x+\eta h$, then
\[
 \norm{z-x}_{g_0(z)}^2-\norm{z-x}_{g_0(x)}^2
 =\eta^2(F(\eta)-F(0)).
\]
Since $\eta^2=r^2/d$, \eqref{eq:FASC} gives the stronger two-sided
estimate
\[
 \abs{\norm{z-x}_{g_0(z)}^2-\norm{z-x}_{g_0(x)}^2}
 \le\frac{\eps r^2}{2d}
 \le\frac{2\eps r^2}{d}.
\]
Together with $z\in\operatorname{int}K$, which is guaranteed by the
path event, this implies the event in \eqref{eq:quantASCdef}; in
particular, it implies the one-sided ASC condition used by the mixing
framework. The base point was arbitrary, and
Lemma~\ref{lem:affineCovariance} transfers both the discrepancy event
and its probability back from the normalization
\eqref{eq:normalization}. The same argument holds with any
$0<s\le r$ in place of $r$. Hence the estimate is uniform in $x$ and
$s$, and the success probability is at least
$1-3\eps/8\ge1-\eps$, proving Theorem~\ref{thm:asc}.
\end{proof}

\begin{remark}[The fourth-order loss]
At order four, a pathwise Cauchy--Schwarz estimate gives
\[
 \abs{H_4(t)}
 \le\norm{q_t}\norm{N_t^{(3)}}_{\op}\norm{v_t}
 \lesssim d^{5/2}.
\]
Theorem~\ref{thm:Hbase} instead gives
$\norm{H_4(0)}_{L^2}\le p^{O(1)}d^2$. The expansion uses the
base-point estimate at each retained order and postpones the pathwise
bound to order $J$, where $\theta^J$ absorbs the residual
$d^{1/2}$ factor.
\end{remark}

\section{From average self-concordance to mixing}\label{sec:mixing}

We now rescale the baseline ASC radius into a dimension-free modulus
and insert it, together with the deterministic geometry of Section
\ref{sec:prelim}, into the abstract mixing reduction.

\begin{lemma}[Dimension-free ASC modulus after scaling]
\label{lem:scaledASC}
With $p=p_m$ and $L_{m,d}$ from \eqref{eq:pchoice}--\eqref{eq:Lchoice},
the family of metrics $g_\star=L_{m,d}g_0$ has an ASC modulus
$\mathfrak a_\star:(0,1)\to(0,\infty)$ independent of $m$ and $d$.
One may take
\[
 \mathfrak a_\star(\eps)
 =c_\star
 \begin{cases}
 \eps^3(1+\log(1/\eps))^{-C_{\rm asc}},
 &0<\eps\le\eps_0,\\
 1,&\eps_0<\eps<1.
 \end{cases}
\]
\end{lemma}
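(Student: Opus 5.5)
The plan is to combine the metric scaling law of Proposition~\ref{prop:scaling} with the baseline radius of Theorem~\ref{thm:asc}, and to use monotonicity of $R_\eps$ in $\eps$ for the large-$\eps$ regime.

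\emph{Small $\eps$.} For $0<\eps\le\eps_0$, Theorem~\ref{thm:asc} at $p=p_m$ gives $R_\eps(g_0)\ge\underline r_\eps$, and Proposition~\ref{prop:scaling} gives
\[
 R_\eps(g_\star)=L_{m,d}^{1/2}R_\eps(g_0)\ge \lambda_m^{1/2}\widehat r_0^{-1}\underline r_\eps
 \ge \widehat r_0^{-1}\underline r_\eps,
\]
since $\lambda_m\ge1$. By \eqref{eq:Lchoice}, $\widehat r_0\le c_{\rm asc}\eps_0^3p_m^{-C_{\rm asc}}\Lambda_{\eps_0}^{-C_{\rm asc}}$, where $\Lambda_\eps=1+\log(2(m+d+1)/\eps)$. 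When the minimum defining $\widehat r_0$ equals $1$, the bound holds trivially up to constants. The ratio therefore satisfies
\[
 \frac{\underline r_\eps}{\widehat r_0}\ge \frac{\eps^3}{\eps_0^3}\Bigl(\frac{\Lambda_{\eps_0}}{\Lambda_\eps}\Bigr)^{C_{\rm asc}},
\]
and the factors $p_m^{-C_{\rm asc}}$ cancel exactly. This cancellation is the point of choosing $\widehat r_0$ as the baseline radius at the fixed level $\eps_0$.

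The remaining step is the elementary inequality $\Lambda_\eps\le \Lambda_{\eps_0}(1+\log(1/\eps))$, valid for $\eps\le\eps_0$. To see it, write $\Lambda_\eps=\Lambda_{\eps_0}+\log(\eps_0/\eps)$ and use $\Lambda_{\eps_0}\ge1$:
\[
 \Lambda_\eps\le \Lambda_{\eps_0}\bigl(1+\log(1/\eps)\bigr).
\]
Hence $R_\eps(g_\star)\ge \eps_0^{-3}\eps^3(1+\log(1/\eps))^{-C_{\rm asc}}$, which is independent of $m$ and $d$. This gives the first case with $c_\star\le\eps_0^{-3}$, and one may simply take $c_\star\le1$.

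\emph{Large $\eps$.} For $\eps_0<\eps<1$, I use that $R_\eps$ is nondecreasing in $\eps$. The event in \eqref{eq:quantASCdef} grows with $\eps$ and the probability threshold $1-\eps$ decreases, so admissibility at $\eps_0$ implies admissibility at $\eps$. Thus $R_\eps(g_\star)\ge R_{\eps_0}(g_\star)\ge \widehat r_0^{-1}\underline r_{\eps_0}$. If $\widehat r_0<1$, this ratio equals $1$; otherwise the bound $R_{\eps_0}(g_\star)\ge \lambda_m^{1/2}\underline r_{\eps_0}$ alone is insufficient. To cover that case I will use $L_{m,d}\ge\lambda_m$ together with $\underline r_{\eps_0}=\widehat r_0=1$, which again gives a bound of at least $1$. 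So $c_\star=1$ works in both cases, modulo the constant adjustments above.

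The only subtle step is the monotonicity of $R_\eps$ in $\eps$ and the exact cancellation of the $p_m$ and $\log(m+d)$ factors through the choice of $\widehat r_0$; everything else is bookkeeping.
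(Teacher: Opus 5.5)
Your proof is correct and follows the paper's argument essentially step for step. For $\eps\le\eps_0$ you use the scaling law together with Theorem~\ref{thm:asc} and bound the ratio $\underline r_\eps/\underline r_{\eps_0}$ via $\widehat r_0\le\underline r_{\eps_0}$; for $\eps>\eps_0$ you use monotonicity of $R_\eps$ in $\eps$. The case split in your large-$\eps$ step is unnecessary, and $\widehat r_0=1$ only gives $\underline r_{\eps_0}\ge1$, not equality: since $\widehat r_0=\min\{1,\underline r_{\eps_0}\}$, one always has $L_{m,d}^{1/2}\underline r_{\eps_0}\ge\underline r_{\eps_0}/\widehat r_0\ge1$.
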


\begin{proof}
For $0<\eps\le\eps_0$, put
$B=1+\log(2(m+d+1)/\eps_0)\ge1$. Then
\[
 \frac{\underline r_\eps}{\underline r_{\eps_0}}
 =\left(\frac{\eps}{\eps_0}\right)^3
 \left(\frac{B}{B+\log(\eps_0/\eps)}\right)^{C_{\rm asc}}
 \ge c\eps^3(1+\log(1/\eps))^{-C_{\rm asc}}.
\]
Proposition~\ref{prop:scaling}, Theorem~\ref{thm:asc}, and
$\widehat r_0\le\underline r_{\eps_0}$ give
\[
 R_\eps(g_\star)
 \ge L_{m,d}^{1/2}\underline r_\eps
 \ge \frac{\underline r_\eps}{\widehat r_0}
 \ge \frac{\underline r_\eps}{\underline r_{\eps_0}}
 \ge c\eps^3(1+\log(1/\eps))^{-C_{\rm asc}},
\]
which proves the first case. For
$\eps>\eps_0$, the event and the required success probability are
weaker than at $\eps_0$, and explicitly
\[
 R_\eps(g_\star)\ge R_{\eps_0}(g_\star)
 \ge L_{m,d}^{1/2}\underline r_{\eps_0}\ge1.
\]
\end{proof}

\begin{proof}[Proof of Theorem~\ref{thm:main}]
The choice $p=p_m$ satisfies the hypothesis of
Proposition~\ref{prop:knownregularity}. Thus
$H_{p_m}=\lambda_mg_0$ is strongly self-concordant and lower-trace
self-concordant, with symmetry parameter at most
$e\lambda_md$. Since
\[
 g_\star=L_{m,d}g_0=\widehat r_0^{-2}H_{p_m}
 \quad\text{and}\quad \widehat r_0^{-2}\ge1,
\]
Proposition~\ref{prop:scaling} shows that $g_\star$ has the same two
differential regularity properties and symmetry parameter
\begin{equation}
 \bar\nu(g_\star)\le e\lambda_md\widehat r_0^{-2}
 =edL_{m,d}.
 \label{eq:finalSymmetry}
\end{equation}
Lemma~\ref{lem:scaledASC} supplies a dimension-free ASC modulus.
Hence the constant $a_0$ in Proposition~\ref{prop:framework} is
universal, while \eqref{eq:finalSymmetry} gives
\[
 d\bar\nu(g_\star)\le e d^2L_{m,d}.
\]
Proposition~\ref{prop:framework} therefore yields an $L^2$ spectral
gap of order $(d^2L_{m,d})^{-1}$ and the iteration bound
\eqref{eq:mainmix}. Its proposal radius depends only on universal
constants.

Finally, $p_m=O(1+\log m)$,
$\lambda_m=p_m^{O(1)}$, and Theorem~\ref{thm:asc} with fixed
$\eps_0$ gives
$\widehat r_0^{-1}=\polylog(m,d)$. Hence
$L_{m,d}=\polylog(m,d)$.
\end{proof}

\begin{corollary}[Uniform sampling]\label{cor:uniform}
Let $\pi$ be the uniform distribution on a bounded full-dimensional
polytope, and suppose that
$\mu_0\ll\pi$ satisfies
\[
 \left\Vert\frac{\dd\mu_0}{\dd\pi}\right\Vert_\infty\le M
\]
for some $M\ge1$. For every $\delta\in(0,1)$, run the Dikin walk with
the scaled metric $g_\star$ for
\begin{equation}
 T\ge C d^2L_{m,d}\log\!\left(\frac{M}{\delta}\right).
 \label{eq:uniformWarmStart}
\end{equation}
Then its law satisfies $\chi^2(\mu_T\Vert\pi)\le\delta$.
Thus its warm-start iteration complexity is
\[
 \widetilde O\!\left(d^2\log\frac{M}{\delta}\right).
\]
\end{corollary}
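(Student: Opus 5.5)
This corollary follows by specializing Theorem~\ref{thm:main} to $\vartheta=0$ and converting the sup-norm warmness into a chi-square bound. The target in \eqref{eq:target} with $\vartheta=0$ is $\pi_0$, the uniform law on $K$. The walk with the scaled metric $g_\star=L_{m,d}g_0$ and radius $\varrho_\star$ is exactly the kernel covered by Theorem~\ref{thm:main}. No additional geometric input is needed, since the SSC, LTSC, symmetry and ASC hypotheses were verified uniformly in $\vartheta$ in Proposition~\ref{prop:knownregularity}, Proposition~\ref{prop:scaling} and Lemma~\ref{lem:scaledASC}.

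The warmness step is elementary. Write $f=\dd\mu_0/\dd\pi$, so $0\le f\le M$ and $\int f\dd\pi=1$. Then
\[
 1+\chi^2(\mu_0\Vert\pi)=\int f^2\dd\pi\le M\int f\dd\pi=M .
\]
In particular $\chi^2(\mu_0\Vert\pi)<\infty$, so Theorem~\ref{thm:main} applies to $\mu_0$.

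Since the logarithm is monotone, $\log\bigl((1+\chi^2(\mu_0\Vert\pi))/\delta\bigr)\le\log(M/\delta)$. Therefore any $T$ satisfying \eqref{eq:uniformWarmStart} also satisfies \eqref{eq:mainmix}, with the same universal constant $C$, and the theorem gives $\chi^2(\mu_T\Vert\pi)\le\delta$. One degenerate point needs checking: if $M/\delta\le1$ the right side of \eqref{eq:uniformWarmStart} is nonpositive. This cannot happen, because $M\ge1>\delta$ forces $\log(M/\delta)>0$.

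The complexity statement follows from the last claim of Theorem~\ref{thm:main}, namely $L_{m,d}=\polylog(m,d)$. This makes the iteration count $\widetilde O(d^2\log(M/\delta))$, where the tilde hides only polylogarithmic factors in $m$ and $d$.

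There is no real obstacle. The only points to verify are the inequality $\int f^2\le M\int f$ and that the constant $C$ is inherited unchanged from \eqref{eq:mainmix}.
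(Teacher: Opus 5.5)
Your proposal is correct and matches the paper's proof: both bound $1+\chi^2(\mu_0\Vert\pi)=\int(\dd\mu_0/\dd\pi)^2\dd\pi\le M$ from the sup-norm warmness and then apply Theorem~\ref{thm:main} with $\vartheta=0$. Your extra remarks, on monotonicity of the logarithm, the positivity of $\log(M/\delta)$, and $L_{m,d}=\polylog(m,d)$, spell out steps the paper leaves implicit.
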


\begin{proof}
The warmness assumption gives
\[
 1+\chi^2(\mu_0\Vert\pi)
 =\int\left(\frac{\dd\mu_0}{\dd\pi}\right)^2\dd\pi
 \le M.
\]
Apply Theorem~\ref{thm:main} with $\vartheta=0$.
\end{proof}

\begin{remark}[Logarithmic factors]
The powers of $p$, $\log m$, $\log d$ and $\eps^{-1}$ have not been optimized. The analytic argument gives the explicit local scale $\rho_p\asymp p^{-3}$; later estimates deliberately enlarge fixed polynomial powers because they do not affect the dimension exponent.
\end{remark}

\appendix

\section{A complex analytic ODE lemma}\label{app:analyticODE}

\begin{lemma}\label{lem:abstractODE}
Let $(X,\norm\cdot)$ be a finite-dimensional complex Banach space. Suppose $F:B(x_0,\delta)\times D(0,\tau_0)\to X$ is holomorphic and
\[
 \sup_{B(x_0,\delta)\times D(0,\tau_0)}\norm{F(x,t)}\le M.
\]
Then the initial-value problem
\[
 x'(t)=F(x(t),t),\qquad x(0)=x_0,
\]
has a unique holomorphic solution on
\begin{equation}
 \abs t<2r_0,
 \qquad r_0=c\min\{\tau_0,\delta/M\},
 \label{eq:abstractradius}
\end{equation}
for a sufficiently small universal $c>0$, and $\norm{x(t)-x_0}\le\delta/2$ there. If $H$ is holomorphic on $B(x_0,\delta)$ and
$\sup_{B(x_0,\delta/2)}\norm{H-H(x_0)}\le B$, then
\begin{equation}
 \norm{\frac{\dd^k}{\dd t^k}H(x(t))\big|_{t=0}}
 \le Bk!r_0^{-k},
 \qquad k\ge1.
 \label{eq:abstractCauchy}
\end{equation}
\end{lemma}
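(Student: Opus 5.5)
The argument is a Picard iteration carried out in a space of holomorphic functions on a small disk, followed by Cauchy's integral formula.

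\emph{Step 1 (setup).} Fix $r_1=\min\{\tau_0,\delta/(2M)\}$. Let $\mathcal B$ be the set of holomorphic maps $y:D(0,r_1)\to X$ that are continuous up to the boundary, satisfy $y(0)=x_0$, and obey $\sup_t\norm{y(t)-x_0}\le\delta/2$. Equip $\mathcal B$ with the sup norm. Define
\[
 (\mathcal T y)(t)=x_0+\int_0^t F(y(s),s)\dd s,
\]
where the integral is taken along the straight segment from $0$ to $t$. The integrand is holomorphic in $s$, so $\mathcal T y$ is holomorphic. The bound $\norm{F}\le M$ gives $\norm{\mathcal Ty(t)-x_0}\le M\abs t\le\delta/2$, so $\mathcal T$ maps $\mathcal B$ into itself.

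\emph{Step 2 (contraction).} We need a Lipschitz bound for $F$ in $x$ on the half ball $B(x_0,\delta/2)$. Apply the Cauchy estimate in $x$ on balls of radius $\delta/2$ contained in $B(x_0,\delta)$. This bounds the derivative of $F$ in $x$ by $2M/\delta$, so $F$ is $(2M/\delta)$-Lipschitz in $x$ on $B(x_0,\delta/2)$. Consequently
\[
 \norm{\mathcal Ty-\mathcal Tz}_\infty\le \frac{2M}{\delta}\,r_1\norm{y-z}_\infty .
\]
Choosing the universal constant $c$ so that $2r_0\le r_1/2$ and shrinking the domain to $\abs t<2r_0$, the Lipschitz factor becomes at most $2Mr_1/\delta\cdot\tfrac12\le\tfrac12$, so $\mathcal T$ is a contraction.

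An alternative that avoids the Lipschitz step is to iterate the Picard maps directly. The iterates are uniformly bounded holomorphic functions, and they converge by the standard factorial estimate
\[
 \norm{y_{n+1}-y_n}\le \frac{\delta\,(2M\abs t/\delta)^n}{n!},
\]
which again uses the $2M/\delta$ Lipschitz bound.

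\emph{Step 3 (existence and uniqueness).} The uniform limit of holomorphic maps is holomorphic, so the fixed point is a holomorphic solution on $\abs t<2r_0$ staying within $\delta/2$ of $x_0$. Uniqueness among solutions that remain in $B(x_0,\delta)$ follows from the same Lipschitz bound through a Gr\"onwall argument along rays from $0$. This settles the first claim.

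\emph{Step 4 (derivative estimate).} The composite $\psi(t)=H(x(t))-H(x_0)$ is holomorphic on $\abs t<2r_0$. Since $x(t)$ stays in $B(x_0,\delta/2)$, the hypothesis gives $\norm{\psi}\le B$ there. Cauchy's formula on the circle $\abs t=r_0$, which lies strictly inside the domain, gives
\[
 \norm{\psi^{(k)}(0)}\le Bk!\,r_0^{-k},\qquad k\ge1 .
\]
The derivatives of $\psi$ and of $H(x(t))$ coincide for $k\ge1$, which proves \eqref{eq:abstractCauchy}.

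The main obstacle is bookkeeping rather than any single step. The Lipschitz bound must be taken on the half ball so that the Cauchy estimate has room inside $B(x_0,\delta)$, and the radius $2r_0$ must fit inside both the contraction region and the time disk $D(0,\tau_0)$. Choosing $c\le 1/8$ handles both requirements.
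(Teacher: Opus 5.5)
Your proposal is correct and follows essentially the same route as the paper: a contraction argument for the Picard map in a space of holomorphic maps on $D(0,2r_0)$, using a Cauchy-estimate Lipschitz bound for $F$ in $x$, followed by the Banach-valued Cauchy formula for $H(x(t))-H(x_0)$ on the circle $\abs t=r_0$. The differences are cosmetic. You take the Cauchy estimate on balls of radius $\delta/2$ (constant $2M/\delta$) where the paper uses radius $\delta/4$ (constant $4M/\delta$). For uniqueness you should also state that any solution satisfies $\norm{x(t)-x_0}\le M\abs t\le\delta/4$, so it automatically lies where your Lipschitz bound applies.
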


\begin{proof}
For $x\in B(x_0,3\delta/4)$ and $\norm{h}=1$, the one-variable map
\[
 z\longmapsto F(x+zh,t),\qquad \abs z<\delta/4,
\]
takes values in the radius-$M$ ball of $X$. The Banach-valued Cauchy
formula therefore gives
\[
 \norm{D_xF(x,t)[h]}\le4M/\delta,
\]
and hence $\norm{D_xF(x,t)}\le4M/\delta$. This argument uses no
equivalence between the chosen norm and a Euclidean norm, so its
constant is independent of $\dim X$.

Let $\cY$ be the complete metric space of holomorphic functions on
$D(0,2r_0)$, continuous on its closure, satisfying
$\sup_{\abs t\le 2r_0}\norm{x(t)-x_0}\le\delta/2$. Define, with the
integral taken along the line segment from $0$ to $t$,
\[
 (\cR x)(t)=x_0+\int_0^tF(x(s),s)\dd s.
\]
Choose the universal constant in \eqref{eq:abstractradius} so that
$2r_0<\tau_0$, $2r_0M\le\delta/2$, and
$8r_0M/\delta\le1/2$. The preceding derivative bound, integrated
along the segment between $x(s)$ and $\widetilde x(s)$, shows that
\[
 \sup_{\abs t\le2r_0}
 \norm{\cR x(t)-\cR\widetilde x(t)}
 \le\frac12\sup_{\abs t\le2r_0}
 \norm{x(t)-\widetilde x(t)}.
\]
Thus $\cR$ is invariant and contractive. Its fixed point is the
unique holomorphic solution staying in the ball; ordinary local
uniqueness then gives uniqueness for the initial-value problem on the
whole disk. On $\abs t\le r_0$, the sharper estimate
$\norm{x(t)-x_0}\le r_0M\le\delta/4$ places the Cauchy circle inside
$B(x_0,\delta/2)$. Applying the Banach-valued Cauchy formula to
$H(x(t))-H(x_0)$ proves \eqref{eq:abstractCauchy}.
\end{proof}

\section{Horizontal frames}\label{app:frame}

\begin{lemma}[Horizontal gauge]\label{lem:horizontalgauge}
Let $E_t\subset\R^m$ be a smooth path of $d$-dimensional subspaces, and let $\widetilde U_t$ be a smooth orthonormal frame for $E_t$. There is a smooth orthogonal matrix $O_t\in\R^{d\times d}$ such that $U_t=\widetilde U_tO_t$ satisfies
\[
 U_t^\transpose U_t=\Id_d,
 \qquad U_t^\transpose U_t'=0.
\]
If $P_t=U_tU_t^\transpose$ and a full-rank matrix $\widehat B_t$ with column space $E_t$ satisfies $\widehat B_t'=Z_t\widehat B_t$, then
\[
 U_t'=(\Id-P_t)Z_tU_t.
\]
\end{lemma}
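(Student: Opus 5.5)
The plan is to construct the gauge by solving a linear matrix ODE, and then to read off the evolution equation by comparing $U_t$ with $\widehat B_t$ through a change of basis.

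\emph{Construction of the gauge.} Define the $d\times d$ matrix path $\Omega_t:=\widetilde U_t^\transpose\widetilde U_t'$. Differentiating $\widetilde U_t^\transpose\widetilde U_t=\Id_d$ gives $\Omega_t+\Omega_t^\transpose=0$, so $\Omega_t$ is skew-symmetric. Let $O_t$ solve the linear equation
\[
 O_t'=-\Omega_tO_t,\qquad O_0=\Id_d
\]
(or $O_0$ equal to a prescribed initial gauge). This has a unique smooth solution on the whole interval, since the equation is linear with smooth coefficients. Skew-symmetry gives
\[
 (O_t^\transpose O_t)'=O_t^\transpose(\Omega_t+\Omega_t^\transpose)O_t=0,
\]
so $O_t$ stays orthogonal. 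Now put $U_t=\widetilde U_tO_t$. Then $U_t^\transpose U_t=O_t^\transpose O_t=\Id_d$, and
\[
 U_t^\transpose U_t'
 =O_t^\transpose\widetilde U_t^\transpose\bigl(\widetilde U_t'O_t+\widetilde U_tO_t'\bigr)
 =O_t^\transpose(\Omega_tO_t-\Omega_tO_t)=0 .
\]

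\emph{Evolution equation.} Since $U_t$ and $\widehat B_t$ are full rank with the same column space $E_t$, there is an invertible matrix
\[
 S_t=(\widehat B_t^\transpose\widehat B_t)^{-1}\widehat B_t^\transpose U_t
\]
with $U_t=\widehat B_tS_t$. It is smooth because $\widehat B_t^\transpose\widehat B_t$ is invertible. Differentiating,
\[
 U_t'=Z_t\widehat B_tS_t+\widehat B_tS_t'=Z_tU_t+U_tX_t,
 \qquad X_t:=S_t^{-1}S_t'.
\]
Left-multiplying by $U_t^\transpose$ and using the horizontal condition and orthonormality gives $0=U_t^\transpose Z_tU_t+X_t$, so $X_t=-U_t^\transpose Z_tU_t$. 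Substituting back yields
\[
 U_t'=(\Id-U_tU_t^\transpose)Z_tU_t=(\Id-P_t)Z_tU_t,
\]
as claimed.

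None of these steps is a real obstacle. The only point needing care is the global existence and smoothness of $O_t$ and $S_t$ on the whole interval. For $O_t$ this follows from linearity of its ODE, and for $S_t$ from the full-rank assumption on $\widehat B_t$. If the frames are real analytic instead of merely smooth, the same argument goes through, giving analytic $O_t$, $S_t$ and $U_t$.
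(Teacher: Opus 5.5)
Your proposal is correct and follows essentially the paper's proof: the gauge comes from the same linear ODE $O_t'=-\Omega_tO_t$ with skew-symmetric $\Omega_t=\widetilde U_t^\transpose\widetilde U_t'$ (the sign in your display for $(O_t^\transpose O_t)'$ should be $-O_t^\transpose(\Omega_t+\Omega_t^\transpose)O_t$, which is harmless since it vanishes). For the evolution equation, you write $U_t=\widehat B_tS_t$ and solve for the tangential part $X_t=-U_t^\transpose Z_tU_t$, whereas the paper writes $\widehat B_t=U_tM_t$, applies $\Id-P_t$, and then uses $P_tU_t'=0$; this is the same change-of-basis argument with different bookkeeping.
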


\begin{proof}
Set $S_t=\widetilde U_t^\transpose\widetilde U_t'$. Differentiating
$\widetilde U_t^\transpose\widetilde U_t=\Id$ shows that $S_t$ is
skew-symmetric. Solve
$O_t'=-S_tO_t$, $O_0=\Id$. Then
\[
 (O_t^\transpose O_t)'=0,\qquad
 U_t^\transpose U_t'
 =O_t^\transpose S_tO_t+O_t^\transpose O_t'=0,
\]
so $O_t$ is orthogonal and $U_t$ is horizontal.

Write $\widehat B_t=U_tM_t$ with $M_t$ invertible. From
\[
 U_t'M_t+U_tM_t'=Z_tU_tM_t
\]
left multiplication by $\Id-P_t$ and right multiplication by
$M_t^{-1}$ give
\[
 (\Id-P_t)U_t'=(\Id-P_t)Z_tU_t.
\]
The horizontal condition gives
$P_tU_t'=U_t(U_t^\transpose U_t')=0$, proving the formula.
\end{proof}

\section{Gaussian tensor identities}\label{app:wick}

\begin{lemma}[Raw monomials and Wick tensors]\label{lem:rawWickAppendix}
For every symmetric $n$-tensor $T$ and $h\sim N(0,\Id_d)$,
\[
 T[h^{\otimes n}]
 =\sum_{r=0}^{\lfloor n/2\rfloor}
 \frac{n!}{2^rr!(n-2r)!}I_{n-2r}(\Tr^rT).
\]
Moreover, for symmetric tensors $S,T$ of orders $a,b$,
\[
 I_a(S)I_b(T)
 =\sum_{\ell=0}^{a\wedge b}
 \ell!\binom a\ell\binom b\ell
 I_{a+b-2\ell}(S\widetilde\otimes_\ell T),
\]
and
\[
 \E[I_a(S)I_b(T)]
 =\mathbf1_{\{a=b\}}a!\ip{S}{T}_{\HS}.
\]
Here $I_0(c)=c$, and a summand in the product formula is present only
for $0\le\ell\le\min\{a,b\}$.  In particular, an order-zero output
is a scalar and its Hilbert--Schmidt norm is its absolute value.
\end{lemma}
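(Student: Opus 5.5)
The plan is to define the Wick polynomials through the Gaussian generating function and to prove all three identities first for rank-one tensors $\lambda^{\otimes n}$. The general case then follows by linearity, since symmetric $n$-tensors are spanned by such powers (polarization). Concretely, I would take $I_n$ to be the unique linear map on $\operatorname{Sym}^n(\R^d)$ with
\[
 \exp\!\Bigl(\ip{\lambda}{h}-\tfrac12\norm{\lambda}^2\Bigr)
 =\sum_{n\ge0}\frac{1}{n!}I_n(\lambda^{\otimes n}),
 \qquad \lambda\in\R^d .
\]
Equivalently, $I_n(\lambda^{\otimes n})=\norm{\lambda}^n\mathrm{He}_n(\ip{\lambda}{h}/\norm{\lambda})$, where $\mathrm{He}_n$ is the probabilists' Hermite polynomial. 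For general $T$, $I_n(T)$ is $T$ contracted against the multivariate Hermite tensor. Well-definedness on $\operatorname{Sym}^n$ follows because the coefficient of each monomial in $\lambda$ is a polynomial in $h$. The convention $I_0(c)=c$ is automatic.

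\emph{Isometry.} Compute the generating function
\[
 \E\bigl[e^{\ip{\lambda}{h}-\norm{\lambda}^2/2}\,e^{\ip{\mu}{h}-\norm{\mu}^2/2}\bigr]=e^{\ip{\lambda}{\mu}}
 =\sum_n\frac{\ip{\lambda}{\mu}^n}{n!}.
\]
Compare the coefficients of terms homogeneous of degree $a$ in $\lambda$ and $b$ in $\mu$. This gives $\E[I_a(\lambda^{\otimes a})I_b(\mu^{\otimes b})]=\mathbf 1_{\{a=b\}}a!\ip{\lambda}{\mu}^a=\mathbf1_{\{a=b\}}a!\ip{\lambda^{\otimes a}}{\mu^{\otimes a}}_{\HS}$. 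Bilinearity and polarization then extend this to all symmetric $S,T$.

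\emph{Raw-to-Wick expansion.} For $T=\lambda^{\otimes n}$ with $\lambda\ne0$, write $x=\ip{\lambda}{h}/\norm{\lambda}$. Use the classical one-variable inversion $x^n=\sum_r\frac{n!}{2^rr!(n-2r)!}\mathrm{He}_{n-2r}(x)$, which follows by multiplying the Hermite generating function by $e^{t^2/2}$. Note that $\Tr^r\lambda^{\otimes n}=\norm{\lambda}^{2r}\lambda^{\otimes(n-2r)}$. Then $I_{n-2r}(\Tr^r\lambda^{\otimes n})=\norm{\lambda}^{n}\mathrm{He}_{n-2r}(x)$, and multiplying the inversion by $\norm{\lambda}^n$ gives the identity for $\lambda^{\otimes n}$. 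Both sides are linear in $T$ and $\Tr^r$ is linear, so polarization gives the general case.

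\emph{Product formula.} Multiply the generating functions:
\[
 e^{\ip{\lambda}{h}-\norm\lambda^2/2}e^{\ip{\mu}{h}-\norm\mu^2/2}
 =e^{\ip{\lambda}{\mu}}\,e^{\ip{\lambda+\mu}{h}-\norm{\lambda+\mu}^2/2}.
\]
Expand the right side as $\sum_\ell\frac{\ip\lambda\mu^\ell}{\ell!}\sum_k\frac1{k!}I_k((\lambda+\mu)^{\otimes k})$ and extract the part homogeneous of bidegree $(a,b)$. Forcing $k=a+b-2\ell$, this part is
\[
 \frac{\ip\lambda\mu^\ell}{\ell!}\cdot\frac{1}{(a-\ell)!\,(b-\ell)!}\,
 I_{a+b-2\ell}\bigl(\lambda^{\otimes(a-\ell)}\widetilde\otimes\mu^{\otimes(b-\ell)}\bigr).
\]
The binomial expansion of $(\lambda+\mu)^{\otimes k}$ contributes $\binom{k}{a-\ell}$ times the symmetrization, which combines with $1/k!$. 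The left side contributes $I_a(\lambda^{\otimes a})I_b(\mu^{\otimes b})/(a!\,b!)$. Moreover $\lambda^{\otimes a}\widetilde\otimes_\ell\mu^{\otimes b}=\ip\lambda\mu^\ell\,\lambda^{\otimes(a-\ell)}\widetilde\otimes\mu^{\otimes(b-\ell)}$. Multiplying by $a!\,b!$ then produces exactly $\ell!\binom a\ell\binom b\ell$. Bilinearity and polarization in $(S,T)$ finish the proof, with only $0\le\ell\le a\wedge b$ appearing because of the degrees.

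The main obstacle is this last coefficient bookkeeping. Two conventions must be checked consistently against the paper's normalization. First, the symmetrized contraction $\widetilde\otimes_\ell$ must be the average over index permutations, so that it is an orthogonal projection. Second, the binomial term of $(\lambda+\mu)^{\otimes k}$ must give the symmetrized tensor product with weight exactly $\binom{k}{a-\ell}$, since $I_k$ only sees the symmetrization. Once these two normalizations are fixed, everything else is generating-function coefficient comparison.
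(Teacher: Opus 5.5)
Your proposal is correct, but it proves the lemma by a different route from the paper.

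The paper does not rederive the product and isometry formulas. It cites them as the standard identities for multiple Wiener integrals from Nualart, and proves only the raw-to-Wick expansion. That proof is an induction on $n$ driven by the $b=1$ case of the product formula: multiplication by $h_j=I_1(e_j)$ either raises the chaos order or contracts one index. The coefficient $a_{n,r}$ then comes from counting the ways to choose $r$ disjoint unordered pairs among the $n$ tensor slots.

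You instead fix $I_n$ through the generating function $e^{\ip{\lambda}{h}-\norm{\lambda}^2/2}$ and verify all three identities on rank-one powers $\lambda^{\otimes n}$ by coefficient extraction. The raw-to-Wick identity reduces to the one-variable Hermite inversion, using $\Tr^r\lambda^{\otimes n}=\norm{\lambda}^{2r}\lambda^{\otimes(n-2r)}$. You then extend by (bi)linearity, since such powers span $\operatorname{Sym}^n(\R^d)$. Your bookkeeping $a!\,b!/(\ell!(a-\ell)!(b-\ell)!)=\ell!\binom a\ell\binom b\ell$ checks out, as does the $\binom{k}{a-\ell}$ weight from expanding $(\lambda+\mu)^{\otimes k}$.

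What your route buys is a self-contained proof that also states explicitly which Wick polynomial $I_n$ is meant. Your normalization $I_n(\lambda^{\otimes n})=\norm{\lambda}^n\mathrm{He}_n(\ip{\lambda}{h}/\norm{\lambda})$ agrees with the multiple Wiener integral the paper cites. What the paper's route buys is brevity, and a combinatorial reading of $a_{n,r}$ as a pairing count, which matches how the traces $\Tr^rT$ enter Theorem~\ref{thm:weightedWick}.

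The one step you leave implicit is exchanging $\E$ with the double series in the isometry computation. This is routine: apply dominated convergence after scaling $\lambda\to s\lambda$, $\mu\to t\mu$. Alternatively, avoid it entirely by deducing the isometry from your product formula together with $\E I_k=0$ for $k\ge1$.
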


\begin{proof}
The product and isometry formulas are the standard identities for
multiple Wiener integrals \cite[Chapter~1]{Nualart06}.  The
symmetrized contraction in the product formula is the orthogonal
projection of the unsymmetrized contraction onto the symmetric
tensor subspace, so
\[
 \norm{S\widetilde\otimes_\ell T}_{\HS}
 \le\norm{S\otimes_\ell T}_{\HS}.
\]
This observation also applies when the output order is zero.

The raw-to-Wick formula follows by induction on
$n$.  Multiplication by $h_j=I_1(e_j)$ either raises the chaos order
by one or contracts $e_j$ with one existing index.  A term with
exactly $r$ contractions is indexed by a choice of $r$ disjoint
unordered pairs among the $n$ tensor slots.  Their number is
\[
 \binom n{2r}\frac{(2r)!}{2^rr!}
 =\frac{n!}{2^rr!(n-2r)!},
\]
which gives the asserted coefficient.  Distinct values of $r$
produce distinct chaos orders $n-2r$, and hence are orthogonal by
the isometry formula.  This last orthogonality statement concerns
the raw-to-Wick expansion itself; multiplication by an additional
Wick polynomial can make different contraction patterns land in the
same output chaos.
\end{proof}

\begin{lemma}[Gaussian radial moment]\label{lem:radialmoment}
For $h\sim N(0,\Id_d)$ and every integer $n\ge1$,
\begin{equation}
 \E\norm{h}^{2n}\le(C(d+n))^n.
 \label{eq:radialmoment}
\end{equation}
\end{lemma}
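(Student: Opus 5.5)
Since $\norm{h}^2$ is a sum of $d$ independent squared standard normals, the plan is to compute the moment exactly and then bound the resulting product term by term. The law of $\norm{h}^2$ is $\chi^2_d$, so for every integer $n\ge1$
\[
 \E\norm{h}^{2n}
 =\prod_{k=0}^{n-1}(d+2k)
 =2^n\frac{\Gamma(d/2+n)}{\Gamma(d/2)}.
\]
This identity follows by integrating $\norm{h}^{2n}$ in polar coordinates against the chi-square density. Alternatively, one can induct on $n$ using Gaussian integration by parts, $\E[\norm{h}^{2n}]=\E[\ip{h}{h}\norm{h}^{2n-2}]=\E[\operatorname{div}(h\norm{h}^{2n-2})]$, which gives the recursion $\E\norm{h}^{2n}=(d+2n-2)\E\norm{h}^{2n-2}$.

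Each factor in the product satisfies $d+2k\le d+2n\le2(d+n)$. Hence
\[
 \E\norm{h}^{2n}\le\bigl(2(d+n)\bigr)^n,
\]
which is \eqref{eq:radialmoment} with the universal constant $C=2$. No step here is delicate. The only point to check is the recursion coefficient $d+2n-2$, which comes from $\operatorname{div}(h\norm{h}^{2n-2})=d\norm{h}^{2n-2}+(2n-2)\norm{h}^{2n-2}$. One could also bypass the exact product and bound $\E\norm{h}^{2n}$ through Gaussian concentration of $\norm h$ around $\sqrt d$ together with a moment bound for the subgaussian fluctuation, but the exact computation is shorter and gives an explicit constant.
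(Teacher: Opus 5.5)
Your proof is correct and is essentially the paper's argument: both use $\norm{h}^2\sim\chi^2_d$ to get the exact product $\E\norm{h}^{2n}=\prod_{k=0}^{n-1}(d+2k)$, then bound each factor by $d+2n\le 2(d+n)$. Your integration-by-parts recursion $\E\norm{h}^{2n}=(d+2n-2)\E\norm{h}^{2n-2}$ is a valid alternative way to derive that same product.
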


\begin{proof}
The variable $\norm{h}^2$ has the $\chi_d^2$ distribution, and
\[
 \E\norm{h}^{2n}=2^n\frac{\Gamma(n+d/2)}{\Gamma(d/2)}
 =\prod_{j=0}^{n-1}(d+2j)\le(d+2n)^n.
\]
\end{proof}

\end{document}